\newcommand{\nocontentsline}[3]{}
\newcommand{\tocless}[2]{\bgroup\let\addcontentsline=\nocontentsline#1{#2}\egroup}
\def\newaliasedtheorem#1[#2]#3{
  \newaliascnt{#1@alt}{#2}
  \newtheorem{#1}[#1@alt]{#3}
  \expandafter\newcommand\csname #1@altname\endcsname{#3}
}
\numberwithin{equation}{section}
\DeclareMathOperator*{\argmin}{arg\,min}
\newtheoremstyle{slanted}{\topsep}{\topsep}{\slshape}{}{\bfseries}{.}{.5em}{}
\theoremstyle{plain}
\newtheorem{theorem}{Theorem}[section]
\theoremstyle{definition}
\theoremstyle{remark}
\newcommand{\setN}{\mathbb{N}}
\newcommand{\setR}{\mathbb{R}}
\newcommand{\N}{\mathbb{N}}
\newcommand{\R}{\mathbb{R}}
\newcommand{\norm}[1]{\left\lVert#1\right\rVert}
\newcommand{\weakto}{\rightharpoonup}
\DeclareMathOperator{\Lip}{Lip}
\newcommand{\Leb}{\mathscr{L}}
\newcommand{\Prob}{\mathscr{P}}
\newcommand{\tL}{\text{L}}
\newcommand{\res}{\mathop{\hbox{\vrule height 7pt width .5pt depth 0pt
\vrule height .5pt width 6pt depth 0pt}}\nolimits}
\newcommand{\suchthat}{\ensuremath{\ : \ }} 
\newcommand{\de}{\ensuremath{\, \mathrm d}} 
\DeclareMathOperator{\supp}{supp} 
\newcommand{\E}{\mathcal{E}}
\newcommand{\cE}{\mathcal{E}}
\newcommand{\cP}{\Prob}
\newcommand{\cK}{\mathcal{K}}
\newcommand{\cC}{\mathcal{C}}
\newcommand{\cPC}{\mathcal{PC}}
\newcommand{\bpW}{\mathbb{W}_{p,m}}
\newcommand{\btwoW}{\mathbb{W}_{2,m}}
\newcommand{\bNW}{\mathbb{W}_{p,m}^N}
\newcommand{\bNtwoW}{\mathbb{W}_{2,m}^N}
\newcommand{\bNhW}{\widehat{\mathbb{W}}_{p,m}^N}
\newcommand{\bNhtwoW}{\widehat{\mathbb{W}}_{2,m}^N}
\newcommand{\bW}{\mathbb{W}}
\newcommand{\Sph}{\mathcal{S}_{\varphi}}
\newcommand{\cR}{\mathcal{R}}
\newcommand{\cM}{\mathcal{M}}
\newcommand{\cD}{\mathcal{D}}
\newcommand{\diam}{\text{diam}}
\def\BBB{\color{blue}}	
\def\BLK{\color{black}\normalsize}
\newcommand{\eps}{\varepsilon}
\newcommand{\vphi}{\varphi}
\newcommand{\inte}{\mathrm{int}}
\newcommand{\dom}{\mathrm{dom}}
\newcommand{\If}{\mathrm{if} \;}
\newcommand{\cJ}{\mathcal J}
\newcommand{\bE}{\mathbb E}
\newcommand{\bCE}{\mathsf{CE}}
\newcommand{\CE}{\mathsf{CE}}
\def\BS{\boldsymbol}
\def\bfmu{{\BS\mu}}
\def\bfnu{{\BS\nu}}            
\newcommand{\TN}{\text{T}\cK_N}
\newcommand{\xTN}[1]{\text{T}_{#1}\cK_N}
\newcommand{\tand}{\quad \text{and} \quad }
\newcommand{\seq}[1]{\left( #1 \right)}
\title[Optimal transport with non
linear mobilities]{Optimal transport with nonlinear mobilities: a deterministic particle approximation result}
\author{S. Di Marino}
\address{Università di Genova, DIMA, MaLGa, Via Dodecaneso 35, 16146 Genova (GE), Italy}
\email{simone.dimarino@unige.it}
\author{L. Portinale}
\address{Haussdorff Center for Mathematics, Institute for Applied Mathematics, Endenicher Allee 62, D-53115 Bonn, Germany}
\email{portinale@iam.uni-bonn.de}
\author{E. Radici}
\address{Università degli Studi dell'Aquila, DISIM, Via Vetoio 1, 67100 (Coppito) L'Aquila (AQ), Italy}
\email{emanuela.radici@univaq.it}
\date{\today}
\begin{document}
\maketitle
\begin{abstract}
	We study the discretisation of generalised Wasserstein distances with nonlinear mobilities
	on the real line via suitable discrete metrics on the cone of N ordered particles, a setting which naturally appears in the framework of deterministic particle approximation of partial differential equations. In particular, we
	provide a $\Gamma$-convergence result for the associated discrete metrics as $N \to \infty$ to the continuous one	and discuss applications to the approximation of one-dimensional conservation laws (of gradient
	flow type) via the so-called generalised minimising movements, proving a convergence result of the schemes at any given discrete time step $\tau>0$. This the first work of a series aimed at sheding new lights on the interplay between generalised gradient-flow structures, conservation laws, and Wasserstein distances with nonlinear mobilities.
\end{abstract}
\setcounter{tocdepth}{1}
\tableofcontents

\section{Introduction}

The optimal transport theory has been a very active research field in the last twenty years, revealing suprising connections between the classical Monge--Kantorovich transportation model and various other fields of mathematics, including the theory of partial differential equations (PDEs) \cite{jordan1998variational}, functional and geometric inequalities \cite{otto2000generalization}, and metric measure spaces \cite{lott2007weak,Lott-Villani:2009,Sturm:2006:I}.
These connections are fundamentally based on the ideas of the seminal works of Benamou--Brenier \cite{BeBr00} and Otto \cite{otto2001geometry}: one can associate a (formal) infinite-dimensional Riemannian structure to the space of probability measures using the quadratic transport distance (i.e. the $2$-Wasserstein--Kantorovich--Rubenstein distance), whose associated gradient flow of the relative entropy coincides with the heat equation (in $\setR^d$). This new perspective opened the door to wide range applications, not limited to the continuous setting of Riemannian manifolds, but applicable to discrete and non-commutative frameworks as well \cite{maas2011gradient, mielke2011gradient,carlen2014analog}.

\vspace{2mm}
The main focus of this work is the study of generalised optimal transport distances \cite{dolbeault2012}, \cite{lisiniMarigonda2010} on the real line, its spatial discretisation using deterministic particle methods (in the spirit of  \cite{difrancesco2015}), and the applications of these methods to the study of the associated gradient-flow evolutions. 

The class of evolutions we are interested in might include local and/or non local effects, possibily including prevention of overcrowding. Mathematically, this translates into considering evolutions with \textit{non-linear mobilities}, possibly with bounded support. To these equations, one can associate a corresponding Wasserstein-like distance that takes into account the nonlinearity. 

Our goal is to study suitable discretisations of these problems, both the generalised optimal transport distances and the associated gradient flows, with particular attention to two types of approximations: in space and in time.

In particular, we introduce, in the scalar case, a \textit{space discretisation} in the framework of non-linear mobilities, adopting a \textit{Lagrangian} point of view. The generalised transport distances are approximated using systems of $N$-ordered particles, and a discrete-to-continuum approximation result is provided (Theorem~\ref{thm:Gamma-conv}). Subsequently, we take advantage of this discrete-in-space approximation to show the stability at the level of the corresponding gradient-flow structures, providing a finite-dimensional approximation 
of the associated evolution equation (Theorem~\ref{thm:JKO_q}).

Our contribution aim towards the understanding of the discrete-to-continuum time and space \textsl{commuting diagram} regarding the one dimensional evolutionary PDEs that can be seen as gradient flows with respect to a non-linear mobility Wasserstein like distance, as described in Figure~\ref{fig:1}.

\,

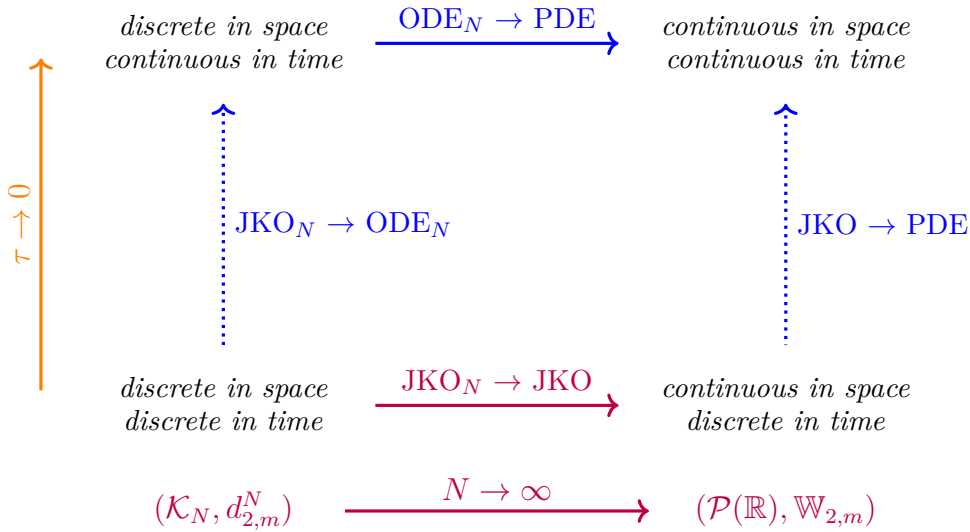
\begin{figure}[h]
	\label{fig:1}
\begin{tikzpicture}[scale=4]
\tikzstyle{every node}=[font=\fontsize{38}{8}]
%
	\draw[black] (0,2) node {\normalsize \textit{discrete in space}};
	\draw[black] (0,1.9) node {\normalsize \textit{continuous in time}};
		\draw[blue,very thick, ->]
		(0.5, 1.95) -- (1.3,1.95) node [above, midway] {\normalsize ODE$_N$ $\to$ PDE};
	\draw[black] (1.85,2) node  {\normalsize \textit{continuous in space}};
	\draw[black] (1.85,1.9) node {\normalsize \textit{continuous in time}};
		\draw[blue,very thick, <-, dotted]
		(1.85, 1.75) -- (1.85,0.95)
		node [right,midway] {\normalsize JKO $\to$ PDE} ;
	\draw[black] (0,0.8) node {\normalsize \textit{discrete in space}};
	\draw[black] (0,0.7) node {\normalsize \textit{discrete in time}};
		\draw[blue,very thick, ->, dotted]
		(0, 0.95) -- (0,1.75)
		node [right,midway] {\normalsize JKO$_N$ $\to$ ODE$_N$} ; 
	\draw[orange,very thick, ->]
		(-0.6, 0.8) -- (-0.6,1.9) node [midway, above, sloped] (TextNode) {\large $\tau \to 0$};  
	\draw[purple] (0,0.4) node {\large $(\mathcal K_N, d_{2,m}^N)$};
		\draw[purple,very thick, ->]
		(0.4, 0.4) -- (1.4,0.4)  node [midway, above, sloped] (TextNode) {\large $N \to \infty$};
	\draw[purple] (1.85,0.4) node {\large $(\mathcal P(\mathbb R), \mathbb W_{2,m})$};
	\draw[purple,very thick, ->]
		(0.5, 0.75) -- (1.3,0.75) 
		node [above, midway] {\normalsize JKO$_N$ $\to$ JKO};
	\draw[black] (1.85,0.8) node {\normalsize \textit{continuous in space}};
		\draw[black] (1.85,0.7) node {\normalsize \textit{discrete in time}};
	\end{tikzpicture}
\caption{The discrete-to-continuum diagram. }
\end{figure}
This work focus on the purple arrows, concerning the convergence of the discrete-in-time schemes and the discrete distances $d_{2,m}^N$ (Section~\ref{sec:setting}) on the cones $\cK_N$ as $N \to \infty$. The top, blue arrow concerns deterministic many particle approximation of PDEs, and it has been already investigated in several works, see Section~\ref{sec:literature} for a more detailed discussion. The natural continuation of this manuscript would consist in the analysis of the two vertical dotted arrows in Figure~\ref{fig:1}. This is currently under investigation, and we postpone it to future works, since it seems to require a different type of analysis. 

\subsection{Optimal transport with non-linear mobilities}

The \textit{generalised Wasserstein distances} were introduced in \cite{dolbeault2012}, \cite{lisiniMarigonda2010}.  Given $\mu_0,\mu_1 \in \Prob(\Omega)$ and a mobility $m:\R_+ \to \R_+$, the generalized Wasserstein psuedo-distance is defined by
\begin{align*}
\btwoW (\mu_0, \mu_1)^2  :=  \inf \left\{ \int_0^1 \int_{\setR^d} |v_t|^2  m(\rho_t) \de \Leb^1 \de t \suchthat \partial_t \mu_t + \nabla \cdot (m(\rho_t) v_t) =0, \ \mu_{t=i} = \mu_i  \right\} ,
\end{align*}
for $i=0,1$, where in the formula above $\rho_t$ denotes the density of $\mu_t$ with respect to the Lebesgue measure $\Leb^1$ on $\R$.

One of the motivation for the introduction of $\btwoW$ is precisely to be able to give a variational structure to PDEs of the type
\begin{align}	\label{eq:gWGF}
\partial_t \rho_t - \nabla \cdot (m(\rho_t) \nabla (\text{D} \cE(\rho_t)) = 0 \, , 
\end{align}
which indeed can be formally seen as gradient flows of $\cE$ with respect to $\btwoW$. The linear case $m(\rho)=\rho$ corresponds to the classical Wasserstein distance $\bW_2$.

Following the ideas of Jordan--Kinderlehrer--Otto, one 
rigorous interpretation of \eqref{eq:gWGF} as a gradient flow 
consists in considering the time-discrete approximation (also known as \textit{JKO-scheme} from the names of the authors, or, more generally, \textit{minimising movements scheme} in the spirit of De Giorgi, see \cite{AmbrosioGigliSavare08,Santambrogio:2017}) of parameter $\tau>0$ given by
\begin{align}	\label{eq:JKOnonlinear}
\mu_\tau^{(n)} \in \argmin_{\mu \in \Prob(\setR^d)} \left\{ \cE(\mu) + \frac1{2\tau}\btwoW(\mu, \mu_\tau^{(n-1)})^2 \right\} \, .
\end{align}
The goal is then to recover a solution \eqref{eq:gWGF} defining a curve $\bm \mu^\tau: [0,1] \to \cP(\setR^d)$ obtained, for every $\tau>0$, interpolating the family $\seq{ \mu_\tau^{(n)}  }_n$ and taking its limit as $\tau \to 0$. This is the content of \cite{jordan1998variational} in the case of linear mobility and the heat equation in $\setR^d$.

\subsection{Main results}

The goal of this work is a particle approximation result for the Wasserstein-like distances with non-linear mobilities $\btwoW$ and its applications to the convergence of associated gradient-flow structures, in the scalar case $d=1$.
A natural generalisation of this result to the case of $p$-homogenous energy functionals for $p \neq 2$ is also considered. For the sake of simplicity, we here restrict the exposition of the main results to the quadratic case.

The starting point of discussion is the work  \cite{difrancesco2015} (together with several companion works, see Section~\ref{sec:literature} for a more detailed literature review), where the authors propose a deterministic particle approximation result to entropic solutions to scalar conservation laws of the form $\partial_t \mu_t + \partial_x (f[\mu_t])=0$. The procedure is based on a discretisation of the Lagrangian formulation of the evolution, working in terms of pseudo-inverses of cumulative distributions and their approximation. For any $\mu \in \Prob(\setR)$, one considers the quantile function 
\begin{align*}
	X_\mu(z) :=
	 \inf \left \{ a \in \setR \suchthat \mu (-\infty, a) > z \right\} \, .
\end{align*}
At least formally, the evolution of $x_t:= X_{\mu_t}$ is then given by
\begin{align}	\label{eq:ODEpseudo}
\dot{x}_t(z) = v[ \mu_t(x_t(z))] \, ,
\end{align}
where $v[\mu] := f[\mu]/\mu$ is the Eulerian velocity.
The above mentioned works consider the discrete model given by the space of $N+1$ ordered particles $\mathcal K_N := \{ x = (x_0, \dots, x_N) \suchthat x_0 < \dots < x_N \} \subset \R^{N+1}$ and space-discrete evolutions obtained via a natural discretisation of \eqref{eq:ODEpseudo}. 
Folloeing \cite{difrancesco2015}, for compactly supported measures $\mu$, we set $x_j^N:= X_\mu(j/N)$ for $j=0, \dots, N$; then we notice that formally we also have $\mu_t(x_t(z))=\partial_z x_t(z)^{-1}$, so that we can use the discrete derivative of $x_t$ as a recostruction for the density. In the end we consider the system of ODEs 
\begin{align}	\label{eq:FTL}
\dot{x}_j^N(t) = v[R_j^N(t)] \, , \quad R_j^N(t) = \frac1{N (x_{j+1}^N(t) - x_j^N(t) )} \, , \quad  j = 0, \dots, N-1 \, , 
\end{align}
There is no canonical definition of the rightmost reconstructed density $R_N^N(t)$, since $x_N^N(t)$ have no particles in front. For example, in \cite{difrancesco2015} the authors set $R_N^N(t) = \arg \max v(\cdot)$, the so-called \textit{Follow-The-Leader scheme}. The name comes from the fact that the last particle $x^N_N$ moves with maximum velocity, whilst the other particles "see" the one in front of them, and they choose their velocity accordingly. 
It is then showed that, both the empirical and piecewise constant measures
\begin{align}	\label{eq:piecewise-empirical}
\E^N(x^N(t)):= \frac1{N+1} \sum_{j=0}^N \delta_{x_j^N(t)} , \quad 
	\cPC^N(x^N(t)):= \sum_{j=0}^{N-1} R_j^N(t) \mathbbm{1}_{\left[ x_j^N(t), x_{j+1}^N(t) \right)} 
\end{align}
converge as $N \to +\infty$ to the (unique) entropy solution of the associated conservation law.

The main goal of this work is to introduce a suitable transport distance at the discrete level, on the open cone $\cK_N$, which plays the role of a discrete approximation of the optimal transport distance $\btwoW$ on $\cP(\R)$, and prove a convergence result as $N \to \infty$; then we will extend this approach to the gradient flow structure introducing the JKO scheme at the discrete level. 
For every given concave mobility $m:\setR_+ \to \setR_+$ and $N \in \setN$, we define
\begin{align}
d_{2,m}^N(z,y)^2 := \inf
	\left\{ 
\int_0^1 \frac{1}{N+1}  \sum_{j=0}^N \frac{|\dot x_j(t)|^2}{\theta(R_j(t))} \de t \suchthat
x(0) = z, \, x(1) = y \, , \  x(t) \in \cK_N  \right\} \, , 
\end{align}
where $\theta(\rho):= m(\rho)/\rho$ and $R_j(t)$ as in \eqref{eq:FTL}. Our first contribution is to show that the discrete (pseudo)distances $d_{2,m}^N$ are a good approximation of the continuous ones $\btwoW$. We define $\bNtwoW: \cP(\R) \times  \cP(\R) \to \R_+ \cup \{ + \infty \}$ as 
$$\bNtwoW(\mu, \nu ) = \begin{cases} d_{2,m}^N(x,y) \qquad &\text{ if }\mu=\cE^N(x) \text{ and } \nu=\cE^N(y) \text{ for some }x,y \in \cK_N \\ +\infty &  \text{ if } \mu \notin \cE^N ( \cK_N) \text{ or } \nu \notin \cE^N ( \cK_N)  \end{cases}$$

\begin{center}
	\textit{First main result}:  the (pseudo)distances $\bNtwoW$   $\Gamma$-converge to $\btwoW$ as $N \to +\infty$, with respect to the vague and the classical $\bW_2$-topology.
\end{center}

Note that in the interesting case when m is assumed to have compact support, the definition of $\bNtwoW$ presents an additional singularity, which in particular makes this convergence less trivial. 

The study of the discrete distances opens the doors to possible applications to gradient flows approximation. In particular, at fixed $N \in \N$, for a given functional on $F_N:\cK_N \to \setR$ and initial value $x^{(0),N} \in \cK_N$, one can consider the associated JKO-scheme
\begin{align}
\label{eq:JKOnonlinearN}
x_\tau^{(n),N} \in \argmin_{x \in \cK_N} \left\{ F_N(x) + \frac1{2\tau}d_{2,m}^N \big( x, x_\tau^{(n-1),N} \big)^2 \right\}.
\end{align}
Assume that $F_N$ suitably converges to some continuous functional $F:\Prob(\setR) \to \setR$ and that the empirical measures $\mu^{(0),N}$ associated to the initial data $x^{(0),N}$ as in \eqref{eq:piecewise-empirical} converge to a limit $\mu^{(0)} \in \Prob(\setR)$. Similarly, denote by $\mu_\tau^{(n),N} = \E^N\big(x_\tau^{(n),N}\big)$ the empirical measures associated to $x_\tau^{(n),N}$.

\begin{center}
	\textit{Second main result}: for  $\tau>0$,  under suitable assumptions on $F_N \to F$ (see \eqref{i:A1}, \eqref{i:A2}, and \eqref{eq:cont_F_N_F}), the steps of the discrete JKO-scheme \eqref{eq:JKOnonlinearN} suitably converge to the steps of the continuous JKO-scheme \eqref{eq:JKOnonlinear},
	i.e. $\mu_\tau^{(n),N} \to \mu_\tau^{(n)}$ for every $n \in \N$, as $N \to +\infty$.
\end{center}
\smallskip

In particular, we believe that the variational point of view coming from the gradient-flow structure represents a powerful tool to handle evolution equations that might present deep mathematical difficulties, due to the lack of regularity and uniqueness.

\subsection{Sketch of the proofs}
For the convenience of the reader, we include a short sketch of the proofs of Theorem~\ref{thm:Gamma-conv} and Theorem~\ref{thm:JKO_q}, avoiding the technical details. In what follows, we denote by $\CE$ the set of solutions of the \textit{continuity equation} on $\R$, i.e. the curves $[0,1] \ni t \mapsto (\mu_t,\nu_t) \in \cP(\R) \times \cM(\R)$ which solves in the sense of distribution the equation
\begin{align*}
	\partial_t \mu_t + \partial_x \nu_t = 0 
		\, , \quad 
	t \in [0,1] \, .
\end{align*}

\subsubsection*{Sketch of the liminf inequality} 
Consider a sequence $\big\{ \big( x^N(0), x^N(1) \big) \big\}_N$, with $x^N(0), x^N(1) \in \cK_N$. Assume that the piecewise-constant measures $\mu_0^N:= \cPC^N(x^N(0))$, $\mu_1^N:= \cPC^N(x^N(1)) \in \cP(\setR)$ are vaguely (in duality with $C_c(\R)$) converging to $\mu_0,\mu_1 \in \cP(\setR)$ as $N \to \infty$. 
We want to show the \textit{liminf inequality}
\begin{align*}
	\liminf_{N \to \infty} d_{2,m}^N(x^N(0), x^N(1)) \geq \btwoW(\mu_0,\mu_1) \, .   
\end{align*}
Let $x^N(\cdot)$ be the discrete geodesic  between $x^N(0)$ and $x^N(1)$ with respect to $d_{2,m}^N$, viz. satisfying
\begin{align}
	\label{eq:sk_energy_bounds_geodes}
d_{2,m}^N(x^N(0), x^N(1)) = \int_0^1 \sum_{j=0}^N \frac{|\dot x_j^N(t)|^2}{\theta(R_j^N(t))} \de t
	\quad \text{and} \quad 
\sup_{N \in \setN}
\sup_{t \in [0,1]} 
	\sum_{j=0}^N 
		\frac{|\dot x_j^N(t)|^2}{\theta(R_j^N(t))} < \infty \,  .
\end{align}
We define $\mu_t^N:= \cPC^N(x^N(t))$ 
and consider the momentum fields given by
\begin{align}	
j_t^N(x):= \sum_{i=0}^{N-1} \dot{x}_i^N(t) R_i^N(t) \mathbbm{1}_{[x_i^N(t), x_{i+1}^N(t))} \, , 
\quad 
\nu_t^N := j_t^N \Leb^1 \, .
\end{align}
Intuitively, because the particles follows the evolution described by $\dot{x}_i^N$, we expect $(\mu_t^N, \nu_t^N)$ to be close to a solution to $\CE$. We show indeed, in the limit as $N \rightarrow +\infty$, that $\partial_t \mu_t^N + \partial_x j_t^N \to 0$ in the sense of distributions.

The second step is to provide a compactness result for (almost) solutions to $\CE$ satisfying energy bounds as \eqref{eq:sk_energy_bounds_geodes}, in the same spirit of \cite[Lemma 4.5]{dolbeault2012}: we prove that, up to a non-relabeled subsequence, $(\mu_t^N, \nu_t^N) \to (\mu_t,\nu_t) \in \CE$ in a weak topology (Proposition~\ref{prop:compactness_asymptCE}).

The final step is the comparison of the discrete and continuous energy functionals: by construction, we have that
\begin{align*}
	\int_\R \frac{|j_t^N|^2}{m(\rho_t^N)} \de x \leq \frac{N+1}{N} \sum_{j=0}^N \frac{|\dot x_j^N(t)|^2}{\theta(R_j^N(t))} 
		\, , \quad 
	t \in [0,1] \, .
\end{align*}
The proof of the liminf inequality is then concluded by \eqref{eq:sk_energy_bounds_geodes} and by means of lower semicontinuity properties of the continuous action \eqref{i:lsc}.

\subsubsection*{Sketch of the limsup inequality}
We fix $\mu_0,\mu_1 \in \cP_q(\R)$ two probability measures with $q$-th moment finite and such that $\btwoW(\mu_0,\mu_1) < \infty$. We pick $(\mu_t,\nu_t)\in\CE$ the unique $\btwoW$-geodesic between $\mu_0$ and $\mu_1$. The goal is to find a sequence $\{ (x^N(0),x^N(1) ) \}_N$ with $x^N(i) \in \cK_N$ such that we have the \textit{limsup inequality}
\begin{align*}
	\limsup_{N \to \infty} 	
		d_{2,m}^N(x^N(0), x^N(1)) 
	\leq
		\btwoW(\mu_0,\mu_1) \, .
\end{align*}
The idea is to first regularise the curve $(\mu_t,\nu_t)$ in order to obtain a solution of $\CE$ with compact support and smooth densities (w.r.t to the Lebesgue measure on $\R$), comparable energy, and whose density does not assume value at the boundary of the support of $m$ (which creates singularities in the energy functional). This is the content of Proposition~\ref{prop:approximation}. Therefore, without loss of generality, we assume here that $(\mu_t,\nu_t) \in \CE$ has such properties: 
\begin{enumerate}
	\item[i)] for all $t \in [0,1]$, $\mu_t$ and $\nu_t$ have compact and connected support, $\mu_t, \nu_t \ll \Leb^1$ and their respective densities densities $\rho_t$, $ j_t$ are $C^\infty$ in their support
	\item[ii)] there exist a compact set $K$ and two constants $0 < l \leq \tilde M < M$ such that 
	\[ \supp(\rho_t), \, \supp( j_t) \subset K  \quad \text{and} \quad  l  < \rho_t(x) < \tilde{M} \quad  \] 
	\text{for all  $x \in \supp(\rho_t)$ and $t \in [0,1]$.}
\end{enumerate}
Therefore, we can consider the flow map $\Psi_{v}$ associated with $v_t:= j_t / \rho_t$ and define for $t \in [0,1]$
\begin{align*}
	X_t(z):= \Psi_v(t,X_0(z)), \quad X_0 := X_{\mu_0}, \quad X_0 \in C^1([0,1]) ,
\end{align*}
which is nothing but the quantile function $X_t = X_{\mu_t}$ of $\mu_t$. The sought recovery sequence can then be obtained as
\begin{align*}
	 x_i^N(t) :=   X_t \left( \frac{i}{N} \right), \quad t \in [0,1], \; i \in \{ 0, ..., N \} \, .
\end{align*}
It is not hard to see that $\mu_t^N:= \cPC^N(x^N(t)) \to \mu_t$ in $\bW_q$ as $N \to \infty$. Moreover, by means of Jensen's inequality and by the regularity assumptions on $(\mu_t,\nu_t)$, one can show that the discrete energy of $x^N(t)$ converge to the one of $(\mu_t,\nu_t)$ as $N \to \infty$, which concludes the proof.

\subsubsection*{Sketch of the JKO-scheme convergence}
The convergence of the discrete JKO-schemes relies on the convergence of the distances. Indeed, under suitable assumptions on the functionals $F_N$, one can reduce the proof to the simpler case of $F_N \equiv 0$. In particular, it suffices to show that, for some $q>1$, the functionals 
\begin{align*}
	y^N \mapsto \cJ_N(y^N) :=d_{2,m}^N(x^N,y^N)^2
		\, , \quad 
	\text{for} \; x^N \in \cK_N 
		\; \text{s.t.} \;  
	\begin{cases}	\displaystyle
		\E^N(x^N) \xrightarrow{\bW_q} \mu^{(0)} \in \cP(\R) , 
	\\	\displaystyle 
		\sup_{N \in \N} \bE_{\E^N(x^N)} |x|^2 < \infty ,
	\end{cases}
\end{align*}
are \textit{equicoercive} and \textit{$\Gamma$-converging} to the continuous one $\mu \mapsto \cJ(\mu):= \btwoW(\mu^{(0)}, \mu)$.

In order to have equicoercivity for $\seq{\cJ_N}_N$, and because we do not want to enforce any coercivity for the functionals $\seq{F_N}_N$ (due to the applications we have in mind), we work with $q<2$. Indeed, using that balls in $\bW_2$ are pre-compact in $\bW_q$, for every $q<2$ (but not $q=2$!), it is not hard to prove the sought equicoercivity. On the other hand, working with $q<2$ makes the $\Gamma$-convergence less trivial. 

Let us say we have $\big(x^N\big)_N$, $\mu^{(0)}$, $\mu$ given and we want to find a recovery sequence $\big(y^N\big)_N$ for $\seq{\cJ_N}_N$. A first n\"aive attempt would be to apply Theorem~\ref{thm:Gamma-conv}, which ensures the existence of a recovery sequence $\{(\tilde x^N,\tilde y^N)\}_N$ such that $d_{2,m}^N(\tilde x^N, \tilde y^N) \to \btwoW(\mu^{(0)},\mu)$, and set $y^N:=\tilde y^N$. Nonetheless, having by assumption that $\E^N(x^N) \to \mu^{(0)}$ in $\bW_q$ (\textit{not} in $\bW_2$), a priori we are not able to show that $d_{2,m}^N(x^N,\tilde x^N) \to 0$.

However, thanks to the fact that $\big(x^N\big)_N$ has uniformly bounded second moments, we are able to provide the existence of the sought recovery sequence in $\bW_q$, for $q<2$. This relies on an quantitative regularisation result (Proposition~\ref{prop:approximation}, in particular $(iii)$) and a careful construction of the recovery sequence, based on Theorem~\ref{thm:Gamma-conv} together with a delicate localisation argument (see proof of Lemma~\ref{prop:limsup_JKO}).

\subsection{Related literature}
	\label{sec:literature}
The Wasserstein gradient-flow structure is a common feature of several evolution equations, including Fokker--Planck equations, porous-medium \cite{otto2001geometry}, fourth-order evolutions \cite{matthes2009,loibl2016}, and more generally can describe diffusion, aggregation, and advection effects, see \cite{Santambrogio:2017} for a general discussion about various possible combinations. 

The JKO scheme as a numerical tool to approximate PDEs has recently been the focus of several works, such as porous medium equations  \cite{benamou2016} and the Patlak--Keller--Segel model \cite{blanchet2008}. In the setting of non-linear mobilities, this variational strcture is exploited in a couple of recent works, in the case of scalar Burger equations \cite{gigli2013} and thin-films evolutions \cite{lisiniMatthes2012}.

In the last decade, a lot of work have been devoted to the study of discrete approximation of optimal transport distances \cite{GiMa13,trillos,GlKoMa18,gladbach2020,gladbach2021} and of the associated gradient flows \cite{disser2015gradient,Forkert2020}. Nonetheless, these works deal with a discretisation of the underlying space (here $\R$), and do not work with deterministic particle methods.

The notion of generalised Wasserstein distances has been introduced in \cite{dolbeault2012,lisiniMarigonda2010} and the study of the corresponding gradient-flow structures found applications to the study of non-linear cross-diffusion systems \cite{burger2010} and fourth-order equations, such as Cahn-Hillard and thin film equations \cite{lisiniMatthes2012},  \cite{zinsl2017}. See also 
\cite{carrillo2010} for the study of convexity properties of functionals on the space of measures with respect to such transport distances.

On the other hand, the validation of the continuum evolutive equations is classically obtained proving the convergence of systems of interacting particles.
The literature involving probabilistic methods is extremely rich and quite well-understood, see e.g. \cite{demasi1991}, \cite{stroock1979}.
A first rigorous fully determistic approximation result for non-linear conservation laws has been obtained in \cite{difrancesco2015}. More in general, many-particle limit results for scalar evolutions with non-local interaction  have been obtained for both linear \cite{daneri2020deterministic,fagioli2020opinion} and non-linear mobilities \cite{diFrancesco2019convergence,difrancescoFagioli2019,fagioli2018,radici2021entropy,Fagioli-Tse:2022}. In particular, these works concern the validity of the top, blue arrow in Figure~\ref{fig:1}.

In all the above mentioned deterministic methods, the particle system is obtained as a discretised version of the Lagrangian formulation of the evolution, using the pseudo-inverse formalism.
We emphasise that \cite{Fagioli-Tse:2022} shows the convergence of the particle systems to the corresponding entropy solution, working directly at the level of the gradient-flow structures at the continuous-time scale (but with no analysis of the behavior of the discrete distances nor the time-discrete scheme).

\subsection*{Organisation of the paper}
In Section~\ref{sec:setting} and \ref{sec:main} we introduce the setting of the problem and state our main results. Section~\ref{sec:approx} includes some preliminary results concerning the approximation of finite-energy solutions to the continuity equation in $\R$, whereas in Section~\ref{sec:proof_dist} and \ref{sec:JKO} we prove our two main results, i.e. the convergence of the discrete distances and of the gradient-flow schemes. 

\subsection*{Notation}
\noindent
For $d,m \in \N$, $\Omega \subset \R^m$, we denote by $\cM(\Omega)$ (resp. $\cP(\Omega)$) the set of all signed, locally finite Radon measures (resp. probability measures) on $\Omega$. Also, $C(\Omega)$, $C_b(\Omega)$, $C_c(\Omega)$ denote respectively the space of continuous, bounded, and compactly supported functions.

For a given sequence of measures $\seq{\nu_n,\nu}_n\subset \cM(\Omega)$, we write
\begin{enumerate}[(i)]
	\item $\nu_n \to \nu$ \textit{vaguely} in $\cM(\Omega)$ if $\int \vphi \de \nu_n \to \int \vphi \de \nu$, for every $\vphi \in C_c(\Omega)$.
	\item $\nu_n \to \nu$ \textit{narrowly} in $\cM(\Omega)$ if $\int \vphi \de \nu_n \to \int \vphi \de \nu$, for every $\vphi \in C_b(\Omega)$.
\end{enumerate}
We also adopt the following notation:
\begin{itemize}
	\item For $\mu \in \cP(\Omega)$, $s \in \R_+$, set  $\bE_\mu |x|^s:= \int |x|^s \de \mu(x)$ ($s$-th moment).
	\item For $N \in \N$, $x \in \R^{N+1}$, $p \geq 1$, we write $\norm{x}_{\ell_{p,N}}:=\frac{1}{N+1} \sum_{i=0}^N |x_i|^p$.
	\item $\Leb^d$ denotes the Lebesgue measure on $\R^d$, $d \in \N$.
	\item For $a,b \in \R$, $a \vee b = \max\{a,b\}$.
\end{itemize}

\section{The discrete and continuous settings}
\label{sec:setting}
In this section, we introduce the general framework of the problem, both in the continuous and discrete setting. 
\subsection{Optimal transport in $\setR$ with nonlinear mobility}

We start recalling the definition of solutions of the continuity equation (as in \cite{dolbeault2012}, \cite{lisiniMarigonda2010}), namely the equation
$
	\partial_t \mu_t + \partial_x \nu_t = 0, 
$ 
in $(0,1) \times \setR$, 
where $\mu_t,\nu_t$ are Borel families of measures in $\cP(\setR)$ and $\cM(\R)$ respectively, and satisfying
the equation in the sense of distributions, namely, $\forall \xi \in C_c^1((0,1) \times \R )$,
\begin{align}	\label{eq:CEdist}
	\int_0^1 \int_{\setR} \partial_t \xi(t,x) \de \mu_t(x) \de t + \int_0^1 \int_{\setR} \partial_x \xi(t,x) \de \nu_t(x) \de t = 0 \, .
\end{align}
For a given sequence of measures $\nu_t$ we consider the corresponding measure $\bfnu:=\int_0^1 \nu_t \de t \in \cM((0,1) \times \R)$ defined via disintegration
\begin{align}\label{eq:bnu}
	<\bfnu,\xi> = \int_0^1 \Big( \int_\setR \xi(t,x) \de \nu_t(x) \Big) \de t, \quad \forall \xi \in C_c((0,1) \times \R ).
\end{align} 
Similarly, we write $\bfmu:=\int_0^1 \mu_t \de t \in \cP((0,1) \times \R)$. 
\begin{definition}[Solution of continuity equation, \cite{lisiniMarigonda2010}]
	\label{def:CE}
We denote by $\CE$ the set of curves $\seq{\mu_t,\nu_t}_{t \in [0,1]}$ such that
\begin{enumerate}
	\item $t \mapsto \mu_t$ is vaguely-continuous in $\cP(\setR)$.
	\item $\seq{\nu_t}_t$ is a Borel family of finite Radon measures on $\R$.
	\item $(\bfmu,\bfnu)$ satisfies \eqref{eq:CEdist}.
\end{enumerate}
In this case we write in short 
	$
		\partial_t \mu_t + \partial_x \nu_t = 0
	$ and 
	$
		(\mu_t,\nu_t) \in \CE 
	$, or $(\bfmu, \bfnu) \in \CE$.
Whenever we want to emphasis the dependence on the initial and final datum, we write $(\mu_t, \nu_t) \in \CE(\mu_0,\mu_1)$. 
\end{definition}
\noindent
We now fix the notation concerning the action functionals. See \cite[Section 2.3]{lisiniMarigonda2010}.

\begin{definition}[Nonlinear mobility]
\label{def:theta}
Given a maximal density $M>0$, a \textit{nonlinear mobility} is a continuous concave function $m:[0,M] \to  [0,+\infty)$ such that $m(0)=0$; sometimes, if needed we extend the domain of $m$ to $[0,+\infty)$ letting $m(\rho)=-\infty$ for $\rho>M$. We moreover define
	\begin{align*}
		\theta:[0,M] \to  [0,+\infty)
			\, , \quad 
		\theta(\rho) :=
	 \begin{cases}
	 	\frac{m(\rho)}{\rho} &\text{if} \ \rho >0  , \\
	 {	\displaystyle
	 	\lim_{\rho \downarrow 0} } \, \frac{m(\rho)}{\rho} 
	 		&\text{if} \ \rho =0. 
	 \end{cases}  
	\end{align*}
	Notice that $\theta$ is an (extended) continuous function in $[0,M]$; in this work, we often assume that $\theta$ is also bounded. Note that by concavity of $m$, this is equivalent to the superdifferential of $m$ being not empty in $\rho=0$. \BLK
\end{definition}

\begin{remark}[Monotonicity of $\theta$]
\label{rem:monotonicity_theta}
	Note that by concavity of $m$, it follows that $\theta$ is nonincreasing on $(0,M]$. Indeed, for $0 < \rho_1 <\rho_2 \leq M$, we have that
	\begin{align*}
		\theta(\rho_2)-\theta(\rho_1) 
			=
		\frac1{\rho_2}
			\Big[
				\big(m(\rho_2) - m(\rho_1)\big) - \frac{m(\rho_1)}{\rho_1}(\rho_2 - \rho_1)
			\Big]
		\leq 0
	\end{align*}
	by concavity of $m$ on $[0,M]$ and $m(0)=0$. In particular, $ \sup \theta  = \theta(0)$.
\end{remark}

\begin{definition}[Action functionals]
		\label{def:action_density_functions}
Fix a maximal density $M>0$ and $p>1$. We then consider $\phi_{p,m}:\setR \times \setR \rightarrow [0,+\infty]$ defined as
\begin{align*}
	\phi_{p,m}(\rho,j) = 
	\begin{cases} \displaystyle
		\frac{|j|^p}{m(\rho)^{p-1}}, & \If \ m(\rho)>0 , \\
		0 & \If \  m(\rho)=0=j, \\ 
		+\infty, & \If \emph{either}  \ m(\rho)=0  \, ,  j \neq 0 \emph{ or } \rho>M \, ,
	\end{cases}
\end{align*}
where $m$ is a nonlinear mobility with maximal density $M$. Note that $\phi_{p,m}$ is a lower semicontinuous, nonnegative, proper, convex function, with $\inte( \dom (\phi_{p,m})) = (0,M)\times \R$. 
For every $\mu \in \cP(\R)$ and $\nu \in \cM(\setR)$, we define the 
\textit{action functional} 
\begin{align}
\label{eq:def_Phi}
	\Phi_{p,m}(\mu,\nu) :=
	\begin{cases}
\displaystyle
		\int_{\setR} \phi_{p,m}(\rho,j) \de x
			&\text{if} \quad \de \mu = \rho \de x \, , \ \de \nu= j \de x \, , 
	\\
		+\infty 
			&\text{otherwise} \, .
	\end{cases}
\end{align}
\end{definition}

\begin{remark}	\label{rmk:finiteaction_abscont}
As observed in \cite[Proposition 2.3]{lisiniMarigonda2010}, if $\mu$, $\nu$ are such that $\Phi_{p,m}(\mu,\nu)<\infty$ (in particular they are absolutely continuous w.r.t. d$x$), then $\de \mu = \rho \de x$ with $0 \leq \rho(\cdot) \leq M$.
\end{remark}

\begin{definition}[$m$-Wasserstein distances] 
Let $\phi_{p,m}$, $\Phi_{p,m}$ be as in Definition~\ref{def:action_density_functions}. 
For any $\mu_0, \mu_1 \in \Prob_p(\setR)$, we define
\begin{align}
		\label{eq:defWphi}
	\bpW(\mu_0,\mu_1) := \inf \left\{ \int_0^1 \Phi_{p,m}(\mu_t,\nu_t) \de t \suchthat (\mu_t,\nu_t) \in \text{CE}(\mu_0,\mu_1) \right\}^{\frac1p}.
\end{align}
Note that $\bpW$ is only a pseudo-distance, given that it can assume the value $+\infty$. If $m(\rho)=\rho$, then we recover the usual $p$-Wasserstein distance $\bW_p$.
\end{definition}

\begin{remark}[Time regularity of finite energy curves]
		\label{rem:time_reg_finite_energy}
	Curves of measures with finite energy enjoy good regularity properties in the time-variable, if $\theta $ is bounded. Precisely,  if $(\mu_t,\nu_t)\in\bCE$ and 
	\begin{align*}
		 E:=\int_0^1 \Phi_{p,m}(\mu_t,\nu_t) \de t < \infty \, .
	\end{align*}
	Then $t \mapsto \mu_t$ is Lipschitz-continuous with respect to the $\bW_p$ distance on $\Prob_p(
	\setR)$. Indeed, by the homogeneity of $\phi$ in the second variable, 
	\begin{align*}
		\bW_p(\mu_s, \mu_t)^p
		\leq \| \theta \|_\infty \bpW(\mu_s,\mu_t)^p
		\leq \| \theta \|_\infty  |t-s|^p \int_0^1 \Phi_{p,m}(\mu_h, \nu_h) \de h 
		\leq E \norm{\theta}_\infty |t-s|^p \, ,
	\end{align*}
	as claimed. In particular, by Prokhorov's theorem, the family of probability measures $\seq{\mu_t}_{t \in [0,1]}$ is tight in $\Prob(\setR)$.
\end{remark}

\subsubsection{Auxiliary results}
	\label{subsec:auxiliary}
In this section we recall some well-known properties of the actions and the generalised distances. We refer to \cite{dolbeault2012} and \cite[Section~3\&4]{lisiniMarigonda2010} for details. Let $\phi_{p,m}$ be as in Definition~\ref{def:action_density_functions}.

\begin{enumerate}[(I)]
	\item \label{i:lsc}
	\textit{Lower semicontinuity}. \
	For every $\seq{\mu_n}_n \in \cP(\R)$, $\seq{\nu_n}_n\subset \cM(\R)$ vaguely converging to $\mu$, $\nu$ respectively, then $\displaystyle\liminf_{n \to \infty} \Phi_{p,m}(\mu_n, \nu_n) \geq \Phi_{p,m}(\mu,\nu)$.
	\item \label{i:convolution}
	\textit{Convolution}. \
	Let $h \in C_b(\R \times \R)$ be a positive function with $\int h(x) \de x = 1$. For $[0,1] \ni t \mapsto (\mu_t,\nu_t) \in \cP(\R) \times \cM(\R)$ a curve of measures, we consider their natural extension on $\R$ by setting
	\begin{align*}
		\mu_t =  \mu_0 
		\ \ \text{if} \ \ t< 0
			\, , \quad 
		\mu_t =  \mu_1 
		\ \ \text{if} \ \  t> 1
			\, , \quad 
		\nu_t = 0 \ \ \text{if} \ \ t \in (0,1)^c \, .
	\end{align*}
	Let $(\bfmu, \bfnu)$ as in \eqref{eq:bnu}; define $\hat\bfmu:= h * \bfmu$ and $\hat \bfnu := h * \bfnu$, where $h * \bfnu$ denotes the measure which is absolutely continuous with respect to the Lebesgue measure on $\R \times \R$ with density $\hat j(t,x) := \int h(t-s, x-y) \de \bfnu(s,y)$.  Note that if $(\mu_t, \nu_t) \in \CE$, then $(\hat\mu_t, \hat\nu_t) \in \CE$ as well. Moreover, by $\phi \geq 0$ and $\phi(\cdot, 0)=0$, we have that \begin{align*}
		\int_0^1 
		\Phi_{p,m}(\hat \mu_t,\hat \nu_t) \de t
	\leq	
		\int_0^1 \Phi_{p,m}(\mu_t,\nu_t) \de t  \, .
	\end{align*}
	\item  \label{i:geodesics_convexity}
	\textit{$\bpW$-geodesics}. \
	Fix $\mu_0$, $\mu_1 \in \cP(\R)$ such that $\bpW(\mu_0,\mu_1) < \infty$. Then there exists $(\mu_t,\nu_t) \in \CE(\mu_0,\mu_1)$ such that $\bpW(\mu_0,\mu_1) = \int_0^1 \Phi_{p,m}(\mu_t,\nu_t) \de t$. Furthermore, $\mu_t$ is a constant speed geodesic for $\bpW$, i.e. it satisfies $\bpW(\mu_t,\mu_s) = |t-s| \bpW(\mu_0,\mu_1)$, for all $s,t \in [0,1]$. 

\end{enumerate}

\subsection{The discrete transport problem}
\noindent
In this section we introduce the discrete setting. Recall that $M \in (0,+\infty)$ is a fixed positive number. For any $N \in \setN$, we consider the (closed) cone $\cK_N$ in $\setR^{N+1}$ given by
\begin{align*}
	\cK_N:= \left\{ x = (x_0, ..., x_N) \in \setR^{N+1} \suchthat  x_{i+1} - x_i \geq \frac1{NM} \, , \; \forall i \right\} \subset \setR^{N+1}.
\end{align*}

For simplicity, we omit the dependence on $M$ in $\cK_N$. We consistently adopt the following notation: for $x \in \cK_N$,  we denote by
\begin{align}
\label{eq:notation_R_i}
	\Delta x_i := x_{i+1}- x_i, \quad R_i^N(x):= \frac{1}{N \Delta x_i} \leq  M  \, , 
\end{align}
for $i=0,...,N-1$, and $\rho^* \in \R_+$ a nonnegative number. The definition of the density reconstruction for $i=N$ admits no canonical definition (there is no particle in front of $x_N$), and several choice are possible. We define $R_N^N(x):= \rho_N^*(x)$, where  $\rho_N^*: \cK_N \to \R_+$ is a family of continuous functions satisfying
\begin{align}
\label{eq:ass_rho*}
	\sup_{N \in \N}  \| \rho_N^* \|_\infty <M \, .
\end{align}  
\begin{remark}[Examples of $\rho_N^*$]
	A simple choice is to set $\rho_N^*(x) \equiv \rho^*:= \arg \max \theta$, then the last particle motion is not influenced by the rest of the particles (\textit{follow-the-leader} scheme, see e.g. \cite{difrancesco2015}). Another option could be to "look back" to the previous particle, i.e. $\rho_N(x)= R_{N-1}^N(x)$.
\end{remark}

\begin{definition}[Discrete distances]
	\label{def:discrete_phi_action}
Fix $N \in \setN$, $p>1$, $M \in \R_+$. Let $\phi_{p,m}$ be as  in Definition \ref{def:action_density_functions}. We define the corresponding \textit{discrete action functional} $\Phi_{p,m}^N:\TN \to \setR^+ \cup \{ +\infty \}$,
\begin{align}	\label{eq:def_PhiN}
		\Phi_{p,m}^N(x, v) := \frac{1}{N+1} \sum_{i=0}^{N} (R_i^N(x))^{p-1}\phi_{p,m}(R_i^N(x),v_i^N)
			=
		\frac1{N+1}
			\sum_{i=0}^N
				\frac{|v_i|^p}{\theta(R_i^N(x))^{p-1}} \, , 
\end{align}
for any $x \in \cK_N$ and $v\in \xTN{x}$. Here $\TN \approx \cK_N \times \setR^{N+1}$ denotes the tangent bundle of the cone $\cK_N$.
Furthermore, we define the \textit{discrete  $\phi_{p,m}$-distance} between $x,y \in \cK_N$ as
\begin{align}	
\label{eq:defdN_vphi}
d_{p,m}^N(x,y)^p:=	 
\inf_{x \in W^{1,p}(0,1 ; \cK_N)} 
\left\{ 
\int_0^1 \Phi_{p,m}^N(x(t), \dot{x}(t))  \de t \suchthat x(0) = x, \; x(1) = y 
\right\}, 
\end{align}
where $W^{1,p}(0,1 ; \cK_N)$ denotes the space of $p$-Sobolev functions from $(0,1)$ to $\cK_N$.
Note that $d_{p,m}^N$ is only a pseudo-distance, given that it can assume the value $+\infty$. 
\end{definition}

\begin{remark}
	For the sake of simplicity, we omit the dependence of $\Phi_{p,m}^N$ on $\rho^*$. As we are going to see, the exact value chosen plays no role in our main results, due to the fact we consider limits as $N \to \infty$. If $\rho^*(x)\geq M$ (and thus $\theta(R_N^N(x))=0$), then $\phi_{p,m}(x,v)<\infty$ forces $v_N =0$.
\end{remark}

\begin{example}[Riemannian case]
In the case $p=2$, $d_{2,m}^N$ is a Riemannian distance induced by the Riemannian metric on the cone $\cK_N$ given by 
\begin{align}	\label{eq:defgN}
	g_N \left( v, w \right)_x := \frac{1}{N+1} \sum_{i=0}^{N} \frac{v_i w_i}{\theta(R_i^N(x))}, \quad \forall x \in \cK_N, \; \forall v,w \in \xTN{x} \, .
\end{align}
\end{example}

Now we prove the existence of geodesics in $\cK_N$, which follows by standard compactness arguments, together with a careful analysis close to the singularity of $\Phi_{p,m}^N$. 
\begin{lemma}[Existence of discrete geodesics]
		\label{lemma:geodesics_discrete}
	Let $\theta \in \emph{L}^\infty(0,M)$ and $p>1$. For every $z,y\in \cK_N$, there exists $x \in W^{1,p}(0,1 ; \cK_N)$ such that $x(0)= z$, $x(1) = y$ and
	\begin{align*}
		d_{p,m}^N(z,y)^p = \int_0^1 \Phi_{p,m}^N
			( x(t), \dot x(t) ) \de t 
		=  \Phi_{p,m}^N
		( x(s), \dot x(s) ) \, , 
		\quad 
			\forall s \in [0,1] \, .
	\end{align*}
\end{lemma}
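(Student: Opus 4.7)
My plan is to apply the direct method of the calculus of variations in three steps: coercivity plus extraction of a minimizing subsequence, lower semicontinuity of the action, and finally a reparameterization argument for the constant-speed claim.

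\textbf{Setup and coercivity.} If $d_{p,m}^N(z,y) = +\infty$ the straight-line interpolation $t \mapsto (1-t)z + ty$ lies in $W^{1,p}(0,1;\cK_N)$ (by convexity of the constraint $\Delta x_i \geq 1/(NM)$) and has necessarily infinite action, so it realizes the infimum; hence I may assume $d_{p,m}^N(z,y) < +\infty$. Take a minimizing sequence $\{x_n\}_n \subset W^{1,p}(0,1;\cK_N)$ with $x_n(0)=z$, $x_n(1)=y$. Using $\theta \in L^\infty(0,M)$ and \eqref{eq:def_PhiN} one gets the coercivity estimate
\[
\int_0^1 |\dot x_n|^p \de t \leq (N+1)\|\theta\|_\infty^{p-1}\int_0^1 \Phi_{p,m}^N(x_n,\dot x_n) \de t,
\]
which, combined with the fixed endpoint $x_n(0)=z$, yields a uniform bound in $W^{1,p}(0,1;\R^{N+1})$.

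\textbf{Compactness and lower semicontinuity.} By Rellich--Kondrachov ($p>1$) I extract a subsequence with $x_n \to \bar x$ uniformly on $[0,1]$ and $\dot x_n \rightharpoonup \dot{\bar x}$ weakly in $L^p$. The endpoint conditions and the closed pointwise constraint $\bar x_{i+1}(t)-\bar x_i(t) \geq 1/(NM)$ pass to the uniform limit, so $\bar x \in W^{1,p}(0,1;\cK_N)$. For the liminf inequality the decisive facts are that $\phi_{p,m}$ is nonnegative, jointly lower semicontinuous, and convex in the $j$-variable (Definition~\ref{def:action_density_functions}) and that the reconstructions $x \mapsto R_i^N(x)$ are continuous on $\cK_N$; a standard Ioffe-type lsc theorem for convex integral functionals (strong-weak in $L^p$) then gives
\[
\int_0^1 \Phi_{p,m}^N(\bar x,\dot{\bar x}) \de t \leq \liminf_{n \to \infty} \int_0^1 \Phi_{p,m}^N(x_n,\dot x_n) \de t = d_{p,m}^N(z,y)^p,
\]
so $\bar x$ is a minimizer.

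\textbf{Constant-speed reparameterization and main obstacle.} To upgrade $\bar x$ to constant speed I set $L(t) := \Phi_{p,m}^N(\bar x(t),\dot{\bar x}(t))^{1/p} \in L^p(0,1)$. The $p$-homogeneity of $v \mapsto \Phi_{p,m}^N(x,v)$ makes $L$ transform as a (Finsler) speed under absolutely continuous monotone reparameterizations, so defining $\sigma(t) := \int_0^t L(r) \de r / \int_0^1 L \de r$ (well-posed once we discard or collapse the intervals where $L=0$, on which $\bar x$ is locally constant) the curve $\tilde x := \bar x \circ \sigma^{-1}$ is a competitor with constant action density equal to $\bigl(\int_0^1 L \de t\bigr)^p$. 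Combined with Jensen's inequality $\int_0^1 L^p \de t \geq \bigl(\int_0^1 L \de t\bigr)^p$ and the minimality of $\bar x$, this forces equality in Jensen, hence $L$ is a.e.~constant, which is the second identity in the statement. I expect the main technical obstacle to be precisely the degenerate locus where $\theta(R_i^N(\bar x))=0$, i.e.~the boundary of $\cK_N$: there both the Finsler metric degenerates and the integrand blows up, so one must rely on the lsc extension built into $\phi_{p,m}$ (a vanishing denominator forces the corresponding $\dot{\bar x}_i$ to vanish too) in the liminf step, and carefully handle the subset $\{L=0\}$ in the reparameterization to make sure $\tilde x$ really belongs to $W^{1,p}(0,1;\cK_N)$.
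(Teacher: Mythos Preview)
Your proposal is correct and follows the same overall architecture as the paper: direct method (coercivity from $\theta\in L^\infty$, Rellich--Kondrachov, lower semicontinuity) followed by a $p$-homogeneous reparameterization argument for constant speed.

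The one substantive difference is in how the lower-semicontinuity step handles the degeneracy $\theta(R_i^N)=0$ on $\partial\cK_N$. You dispatch it in one stroke by invoking an Ioffe-type theorem for integrands that are jointly lsc in $(x,v)$, convex in $v$, and nonnegative; this is legitimate since $R_i^N$ is continuous on the closed cone and $\phi_{p,m}$ carries the right lsc extension at $m=0$. The paper instead avoids citing such a result and works in two stages: first it treats the nondegenerate case $\theta(M)>0$, where $\theta(R_i^N(x^n))^{1-p}$ converges \emph{uniformly} and classical lsc for $\int a(t)|\dot x|^p$ applies termwise; then it reduces the general case to this one by introducing the dilated mobilities $m^\lambda(\rho)=\lambda\, m(\rho/\lambda)$, for which $\theta^\lambda(M)=\theta(M/\lambda)>0$, and passes to the limit $\lambda\downarrow 1$ by monotone convergence. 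Your route is shorter and leans on a standard black box; the paper's route is more self-contained and makes the role of the degeneracy explicit through the $m^\lambda$ regularization. For the constant-speed claim the two arguments are dual: you build the arc-length reparameterization directly, while the paper argues by contradiction via Jensen and $C^1$ reparameterizations---both hinge on the same $p$-homogeneity.
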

\begin{proof}
For $z,y \in \cK_N$, pick a minimising sequence $\seq{x^n}_n \subset W^{1,p}(0,1 ; \cK_N)$, i.e.  
\begin{align*}
	\lim_{n \to \infty} 
		\int_0^1 \Phi_{p,m}^N(x^n(t), \dot x^n(t)) \de t = d_{p,m}^N(z,y)^p
	\, , \quad 
		x^n(0) = z 
	\, , \; 
		x^n(1) = y \, .
\end{align*}
With no loss of generality, we assume $D:= \sup_n \int_0^1 \Phi_{p,m}^N(x^n, \dot x^n) \de t < \infty$. In particular
\begin{align*} 
		\sup_{n \in \N} \int_0^1 \norm{\dot x^n(t)}_{\ell_{p,N}}^p \de t \leq D \norm{\theta}_\infty^{p-1} < \infty\, .
\end{align*}

In particular, up to (a non-relabeled) subsequence, $x^n \to  x$ weakly in $ W^{1,p}(0,1 ; \cK_N)$ and strongly in $C(0,1 ; \cK_N)$ (by Sobolev embedding, thanks to $p>1$) as $n \to \infty$. To show that $x$ is indeed a minimiser, we show the lower semicontinuity of the discrete energies with respect to such convergence.

First we assume that $\theta(M)>0$ and in particular $\theta$ is continuous and bounded from below. Now, since $R_i^N(x^n)$ is converging uniformly to $R_i^N(x)$, we also have that $\theta(R_i^N(x^n))^{1-p}$ will converge uniformly to $\theta(R_i^N(x))^{1-p}$. Since $\dot{x}^n \weakto \dot{x}$ weakly in $L^p$, we then can conclude, by classical semicontinuity arguments, that
$$ \liminf_{n \to \infty} \int_0^1 \frac{ |\dot{x}^n_i(t)|^p}{\theta(R_i^N(x^n(t)))^{p-1}}  \de t \geq  \int_0^1 \frac{ |\dot{x}_i(t)|^p}{\theta(R_i^N(x(t)))^{p-1}}\de t.$$
Summing up over $i=0, \ldots, N$ and then dividing by $N+1$ we get the actual semicontinuity of the energy $\liminf_{n \to \infty} \int_0^1 \Phi_{p,m}^N(x^n, \dot x^n) \de t \geq\int_0^1 \Phi_{p,m}^N(x, \dot x) \de t$, hence $x$ is a minimizer.

In order to deal with the general case we consider for $\lambda >1$ a relaxed mobility $m^{\lambda}(\rho)=\lambda m(\rho/\lambda)$. Of course we have $\theta^{\lambda}(\rho)=\theta(\rho/\lambda)$. Using the monotonicity of $\theta$ we have that $\Phi_{p,m^{\lambda}}^N(x^n, \dot x^n) \leq \Phi_{p,m}^N(x^n, \dot x^n)$. Moreover, since $\theta^{\lambda}(M)=\theta(M/\lambda)>0$ we can apply the previous result and we can get
$$\liminf_{n \to \infty} \int_0^1 \Phi_{p,m}^N(x^n, \dot x^n) \de t \geq \liminf_{n \to \infty} \int_0^1 \Phi_{p,m^{\lambda}}^N(x^n, \dot x^n) \de t \geq \int_0^1 \Phi_{p,m^{\lambda}}^N(x, \dot x) \de t.$$
By monotone convergence we then conclude since $\Phi_{p,m^{\lambda}}^N(x, \dot x) \uparrow\Phi_{p,m}^N(x, \dot x)$.

The fact that $s \mapsto \Phi_{p,m}^N
( x(s), \dot x(s) )$ is constant along discrete geodesics follows by classical arguments and is consequence of the $p$-homogeneity of $\phi$. Let us argue by contradiction that it is not true; in particular by Jensen we have $\int_0^1 \Phi_{p,m}^N( x(s), \dot x(s) ) \,ds > \left(\int_0^1 \Phi_{p,m}^N( x(s), \dot x(s) )^{1/p} \,ds \right)^p$. Now let us consider any $C^1$ increasing reparametrization $\xi:[0,1] \to [0,1]$, and let $y(t)=x(\xi(t))$; we have of course $y(0)=0$ and $y(1)=1$. Moreover
\begin{align*}\int_0^1 \Phi_{p,m}^N(y(t), \dot y (t)) \,dt &= \int_0^1 \Phi_{p,m}^N(x(\xi(t)),\xi'(t) \dot x (\xi(t))) \,dt  \\ & \geq \left( \int_0^1 \Phi_{p,m}^N(x(\xi(t)), \xi'(t)\dot x (\xi(t)))^{1/p} \,dt\right)^p\\
 &= \left( \int_0^1 \xi'(t) \Phi_{p,m}^N(x(\xi(t)), \dot x (\xi(t)))^{1/p} \,dt\right)^p \\ &= \left( \int_0^1 \Phi_{p,m}^N(x(s),\dot x (s))^{1/p} \,dt\right)^p.
\end{align*}
We can have equality as long as $\xi'(\xi^{-1}(s))=(\xi^{-1})'(s)$ is proportional to $\Phi_{p,m}^N( x(s), \dot x(s) )^{1/p}$. While this is not guaranteed to happen for $\xi \in C^1$ we can use a sufficiently good approximation in order to get $\int_0^1 \Phi_{p,m}^N(y(t), \dot y (t)) \,dt <\int_0^1 \Phi_{p,m}^N( x(s), \dot x(s) ) \,ds $, which is a contradiction.

\end{proof}

In order to compare the discrete and the continuous setting, we introduce suitable embedding maps from the cones $\cK_N$ to the space of continuous measures $\Prob(\setR)$.

\medskip
\noindent
\textit{Embedding via piecewise-constant measures.} \
For any $N \in \setN$, we consider the map
\begin{align}
		\label{eq:defPC^N}
	\cPC^N: \cK_N \to \Prob(\setR)
		\, , \quad 
	\cPC^N(x) = \sum_{i=0}^{N-1} R_i^N(x) \mathbbm{1}_{\left[  x_i, x_{i+1}\right]}(\cdot) \Leb^1 \, ,
\end{align}
where $\Leb^1$ denotes the Lebesgue measure on $\setR$. Note that, for every $x \in \cK_N$, the measures $\cPC^N(x)$ are absolutely continuous with respect to $\Leb^1$. Moreover, $\cPC^N(x)$ charges every interval $[x_i,x_{i+1}]$ with equal mass $\frac1N$. We shall use the shorthand notation $\cPC^N:= \cPC^N(\cK_N) \subset \Prob(\setR)$.

\medskip
\noindent
\textit{Embedding via empirical measures.} \
For any $N \in \setN$, we consider the map
\begin{align}
		\label{eq:defE^N}
	\E^N: \cK_N \to \Prob(\setR)
		\, , \quad 
	\E^N(x) = \frac{1}{N+1} \sum_{i=0}^N \delta_{x_i} \, .
\end{align}

Note that the image of $\E^N$ consists of atomic measures over the real line. In particular, $\E^N(x)$ is singular with respect to the Lebesgue measure. We adopt the shorthand notation $\E^N:= \E^N(\cK_N) \subset \Prob(\setR)$.

\medskip
It's easy to see that $\E^N$ and $\cPC^N$ are injective. Both embeddings maps play an important role in linking the discrete and  continuous framework. As we are going to see in Lemma \ref{lemma:weak_limits}, the two maps turn out to be equivalent at the level of the convergence, as $N \to \infty$, with respect to a broad class of weak topologies on $\Prob(\setR)$.

On one hand, the $\cPC^N$-embedding appears to be more natural in the discretisation of the continuous distances $\bpW$, for two reasons: firstly, the image measure is always absolutely continuous, as every measure in the domain of $\bpW$. Secondly, we note that, for $x \in \cK_N$ and $v \in T_x \cK_N$, one has that
\begin{align}	\label{eq:A<AphiN}
\Phi_{p,m}(\mu^N, \nu^N) \leq \frac{N+1}{N} \Phi_{p,m}^N(x,v)
\end{align}
for $\mu^N=\cPC^N(x)$ and $	\nu^N := 
		\sum_{i=0}^{N-1} 
			v_i R_i^N(x) \mathbbm{1}_{\left[  x_i, x_{i+1}\right]}(\cdot) \Leb^1 \in \cM(\R)$.

On the other hand, the $\E^N$-embedding enjoys good properties at the level of the $p$-moments. Indeed, by constructions, the $p$-th moments of $\E^N(x)$ coincides with 
$
		 \| x \|_{\ell_{p,N+1}}^p
$,
thus properties in the $p$-Wasserstein space can be recast in terms of the behavior in the space $(\cK_N, \| \cdot \|_{\ell_{p,N}})$.

Depending on the choice of the embedding, we can define the corresponding embedded distance at the continuous level.

\begin{definition}[Discrete distances - embedding]	\label{def:discrete_distance}
Fix $N \in \setN$, and let $\phi_{p,m}$ be an energy density as in Definition~\ref{def:action_density_functions}. Define the pseudo-distances $\bNW$ and $\bNhW$ on $\Prob(\setR)$ as 
\begin{align*}
	\bNW (\mu_0, \mu_1) 
	&:= 
	\begin{cases}
		d_{p,m}^N(x,y), &\text{if }\mu_0 = \cPC^N(x), \; \mu_1=\cPC^N(y), \; x,y \in \cK_N \, , \\
		+\infty, &\text{otherwise}  \, ,
	\end{cases} \\
	\bNhW (\mu_0, \mu_1) 
		&:= 
	\begin{cases}
	d_{p,m}^N(x,y), &\text{if }\mu_0 = \E^N(x), \; \mu_1=\E^N(y), \; x,y \in \cK_N \, , \\
	+\infty, &\text{otherwise} \, ,
	\end{cases}
\end{align*}
where $d_{p,m}^N$ is the discrete pseudo-distance defined in \eqref{eq:defdN_vphi}.
\end{definition}

\subsection{The minimising movements scheme}
Let $\phi_{2,m}$ be a density function as in Definition~\ref{def:action_density_functions}.
 Note that in this quadratic case, $\bW_{2,m}^N$ is induced by a Riemannian metric \eqref{eq:defgN} on $\cK_N$, denoted by $g_N$. In this section, we introduce the discrete-in-time schemes (both in $\cK_N$ and $\R$) for the gradient flow of an energy functional with respect to $\btwoW$, $\bW_{2,m}^N$, in the spirit of \cite{jordan1998variational}.

\begin{definition}[Continuous JKO-scheme]
	\label{def:discrete_JKO}
	For a given $F:\Prob(\setR) \to (-\infty, +\infty]$ and $\tau >0$, we iteratively define for $n \in \N$
	\begin{align}	\tag{JKO$_\tau$}
		\label{eq:JKOtau}
		\mu^{(0)} \in \Prob(\setR), \quad \mu_\tau^{(n)} \in \text{argmin}_\mu \left\{ F(\mu) + \frac{1}{2\tau} \btwoW \big(\mu,\mu_\tau^{(n-1)}\big)^2 \right\}.
	\end{align}
\end{definition}
Formally, in the limit as  $\tau \to 0$, one formally obtains the PDE
\begin{align*}
	\partial_t \mu_t - \partial_x \left(m(\mu_t) \partial_x \left( \text{D}F(\mu_t ) \right) \right)=0, \quad \mu_0 := \mu^{(0)} \in \Prob(\setR),
\end{align*}
where D$F$ denotes the first variation of $F$ (i.e. $($D$F(\rho), \omega) = \lim_{\eps \to 0} \eps^{-1} (F(\rho+\eps w)- F(\rho))$). This equation is the (formal) gradient flow of $F$ with respect to the generalised Wasserstein distance $\btwoW$.
The corresponding discrete counterpart is given by the following definition. 
\begin{definition}[Discrete JKO-schemes]	\label{def:discrete_JKO}
	For a given $N \in \setN$, $F_N:\Prob(\setR) \to (-\infty, +\infty]$, $\tau >0$, we iteratively define for $n \in \N$
	\begin{align}	\tag{JKO$_{\tau,N}$}
		\label{eq:JKOtauN}
		\mu_N^{(0)} \in \E^N, \quad \mu_{\tau,N}^{(n)} \in \argmin_{\mu \in \Prob(\setR)} \left\{ F_N(\mu) + \frac{1}{2\tau} \bNtwoW\big(\mu,\mu_{\tau,N}^{(n-1)}\big)^2 \right\}	\, , 
	\end{align}
	where $\E^N$ denotes the set of empirical measures as given in \eqref{eq:defE^N}.
\end{definition}

\medskip
\noindent
\underline{Assumptions}. In order to ensure well-posedness, we need the following assumptions. 

\begin{enumerate}[(A)]
	\item\label{i:A1} 
	There exist $C,D \in \setR^+$, $s \in (0,2)$, such that
	\begin{align}	\label{eq:lowercontrolF-pmoments}
		F(\mu) \geq -C \int |x|^s \de \mu(x) - D, \quad \forall \mu \in \Prob(\setR) \, .
	\end{align}
	\item\label{i:A2}
	$F:(\Prob(\setR),\bW_q) \to (-\infty,+\infty]$ is proper and lsc, for some $q<2$. 
\end{enumerate}

\begin{example}
	\label{ex:potential_energies}
	Fix $f:\R \to \R \cup \{+\infty\}$. Then the functional 
	\begin{align}
	\label{eq:potential_energies}
		F(\mu) = \int f(x) \de \mu(x), \quad [f(x)]_{\_}\leq C|x|^s + D
	\end{align}
	satisfies \eqref{eq:lowercontrolF-pmoments}. If $f$ is lower semicontinuous, also the assumption \eqref{i:A2} is satisfied \cite[Lemma~5.1.7]{AmbrosioGigliSavare08}. A simple example is given by the linear drift $f(x) = \pm x$.
\end{example}

A consequence of the above assumptions is the well-posedness of the variational problems in \eqref{eq:JKOtauN}, \eqref{eq:JKOtau}. 
Fix $N \in \N$ and two functionals $ F_N$, $F$ as above, and define the discrete JKO functionals as
\begin{align}
	\label{eq:def_JtauN}
	\cJ_{\tau,N}: \Prob(\setR) \to [0,+\infty], \quad \cJ_{\tau,N} (\mu) := F_N(\mu) +  \frac1{2\tau}\bNtwoW(\mu, \mu_N^{(0)})^2 \;,
\end{align}
whereas the continuous JKO functional are denoted by
\begin{align}
	\label{eq:def_Jtau}
	\cJ_\tau: \Prob(\setR) \to [0,+\infty], \quad  \cJ_\tau(\mu):= F(\mu) + \frac1{2\tau} \btwoW(\mu,\mu^{(0)})^2 \; .
\end{align}
Recall the definition of $\theta$ asociated to a nonlinear mobility $m$ as in Definition~\ref{def:theta}.
\begin{proposition}[Well-posedness]	\label{prop:well-posedness}
	Fix $N \in \N$ and assume that $\theta \in \emph{L}^\infty(0,M)$. Let $F$, $F_N$ be two functionals satisfying \eqref{i:A1},\eqref{i:A2}.
	\begin{enumerate}[(i)]
		\item 
		\label{i:prop:well-posedness_1}
		If the initial measure $\mu_N^{(0)}$ belongs to $\Prob_2(\setR)$, then the minimisation problems \eqref{eq:JKOtauN} admit solutions, for every $ n \in \setN$. 
		\item 
		\label{i:prop:well-posedness_2}
		The property $(i)$ holds as well if $\bNtwoW$ is replaced with $\bNhtwoW$ in \eqref{eq:JKOtauN}.
		\item 
		\label{i:prop:well-posedness_3}
		If $\mu^{(0)} \in \Prob_2(\setR)$, then the continuous minimisation problems \eqref{eq:JKOtau} admit solutions, for every $n \in \setN$.  
	\end{enumerate}

\end{proposition}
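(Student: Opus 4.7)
The plan is the classical direct method of the calculus of variations, carried out in the cone $\cK_N$ for (i)-(ii) and in $\Prob_2(\setR)$ for (iii). The key quantitative ingredient is Remark~\ref{rem:monotonicity_theta}, which gives $\theta \leq \theta(0) = \norm{\theta}_\infty < \infty$ and hence $\phi_{2,m}(\rho,j) \geq \norm{\theta}_\infty^{-1} j^2/\rho$. This produces the basic comparisons
\begin{align*}
	\btwoW(\mu,\nu)^2 \geq \norm{\theta}_\infty^{-1} \bW_2(\mu,\nu)^2
		\, , \quad
	d_{2,m}^N(x,y)^2 \geq \norm{\theta}_\infty^{-1} \norm{x-y}_{\ell_{2,N}} \, ,
\end{align*}
the former by integrating the pointwise inequality along admissible curves in $\CE$, the latter from the pointwise bound $\Phi_{2,m}^N(x,v) \geq \norm{\theta}_\infty^{-1} \norm{v}_{\ell_{2,N}}$ combined with Jensen's inequality on length functionals. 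Combined with \eqref{i:A1} and using the strict bound $s < 2$, Young's inequality and the triangle inequality in $\bW_2$ (which requires $\mu_N^{(0)}, \mu^{(0)} \in \Prob_2(\setR)$) produce coercivity estimates of the form $\cJ_{\tau,N}(\mu) \geq c\, \bE_\mu |x|^2 - C'$ with $c>0$, and analogously for $\cJ_\tau$.

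For (i) and (ii), observe that $\cJ_{\tau,N}(\mu) < +\infty$ forces $\mu \in \cPC^N$ (resp.\ $\mu \in \E^N$), so the problem reduces to minimising over the closed cone $\cK_N \subset \setR^{N+1}$ the finite-dimensional functional $\tilde\cJ_{\tau,N}(x) := F_N(\cPC^N(x)) + (2\tau)^{-1} d_{2,m}^N(x,x^{(0)})^2$, where $\mu_N^{(0)}=\cPC^N(x^{(0)})$. The coercivity bound above controls $\norm{x^n}_{\ell_{2,N}}$ along any minimising sequence $\seq{x^n}_n$ (via the comparison $\bE_{\cPC^N(x)}|y|^2 \geq c\norm{x}_{\ell_{2,N}} - C$), hence Bolzano--Weierstrass yields a subsequential limit $x^* \in \cK_N$. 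The componentwise convergence $x^n \to x^*$ implies narrow convergence $\cPC^N(x^n) \to \cPC^N(x^*)$ with convergence of all moments, hence convergence in $\bW_q$, so that \eqref{i:A2} gives $F_N(\cPC^N(x^*)) \leq \liminf_n F_N(\cPC^N(x^n))$. Lower semicontinuity of $x \mapsto d_{2,m}^N(x,x^{(0)})^2$ follows by reproducing the compactness argument in the proof of Lemma~\ref{lemma:geodesics_discrete}: optimal curves from $x^n$ to $x^{(0)}$ have uniformly bounded action, admit a weak $W^{1,2}$-limit in $\cK_N$ connecting $x^*$ to $x^{(0)}$, and the lsc of $\Phi_{2,m}^N$ concludes. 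The direct method then produces a minimiser; the argument for $\bNhtwoW$ in (ii) is identical with $\cPC^N$ replaced by $\E^N$ throughout.

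For (iii), the same direct method is performed in $\Prob_2(\setR)$ endowed with narrow convergence. The lower bound forces uniformly bounded second moments along minimising sequences, hence tightness by Prokhorov and a narrowly convergent subsequence $\mu^n \to \mu^*$; the uniform $\bW_2$-bound upgrades this to convergence in $\bW_q$ for every $q<2$, so that \eqref{i:A2} yields $F(\mu^*) \leq \liminf_n F(\mu^n)$. Lower semicontinuity of $\mu \mapsto \btwoW(\mu,\mu^{(0)})^2$ with respect to narrow convergence relies on the standard compactness argument for finite-energy curves in $\CE$ (\cite[Lemma~4.5]{dolbeault2012}, in the same spirit as Proposition~\ref{prop:compactness_asymptCE}): pick optimal $(\mu_t^n,\nu_t^n) \in \CE(\mu^{(0)},\mu^n)$, extract a weak limit $(\mu_t,\nu_t) \in \CE(\mu^{(0)},\mu^*)$, and apply \eqref{i:lsc}. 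The most delicate point throughout is precisely this lower semicontinuity of the pseudo-distances in one variable, which is not automatic for $[0,+\infty]$-valued quantities but follows from the compactness structure of $\CE$ and of its finite-dimensional analogue on $\cK_N$, together with the lsc of the actions.
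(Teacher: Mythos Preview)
Your proposal is correct and follows essentially the same approach as the paper: the direct method, with coercivity obtained from the comparison $\btwoW \geq \norm{\theta}_\infty^{-1/2}\bW_2$ (resp.\ its discrete analogue, which is the content of Lemma~\ref{lemma:bounds_distances}) combined with \eqref{i:A1}, and lower semicontinuity from \eqref{i:A2}. If anything, you are slightly more careful than the paper in justifying the lower semicontinuity of the distance term --- the paper simply invokes ``continuity of $d_{2,m}^N$'' in the discrete case and says nothing explicit about the lsc of $\btwoW(\cdot,\mu^{(0)})$ in (iii), whereas you spell out the underlying compactness arguments for finite-energy curves; but this is a difference in level of detail, not in strategy.
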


We first need the following preliminary result, concerning the relation between the weighted distance $\bpW$ and the classical Wasserstein $\bW_p$.

\begin{lemma}[Comparison with Wasserstein distances] \label{lemma:bounds_distances}
	Assume $\theta \in \emph{L}^{\infty}(0,M)$. Then it holds
	\begin{align} \label{eq:lowbound_distances_cont}
	\bpW(\mu_0,\mu_1)^p \geq \frac{1}{\| \theta \|_{\infty}^{p-1}} \bW_p(\mu_0,\mu_1)^p, \quad \forall \mu_0, \mu_1 \in \Prob(\setR).
	\end{align}
Similarly,  for every $N \in \N$, we have that 
	\begin{align}	\label{eq:lowbound_distances_discr}
	\bNhW(\mu_0,\mu_1)^p \geq \frac{1}{\| \theta \|_{\infty}^{p-1}} \bW_p(\mu_0, \mu_1)^p, \quad \forall \mu_0, \mu_1 \in \E^N.
	\end{align}
	\end{lemma}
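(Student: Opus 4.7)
The plan is to compare the actions pointwise with the Benamou--Brenier action for the classical $p$-Wasserstein distance, exploiting the factorisation $m(\rho)=\rho\,\theta(\rho)$ and the boundedness of $\theta$.

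For the continuous inequality \eqref{eq:lowbound_distances_cont}, I would first fix any admissible curve $(\mu_t,\nu_t)\in\CE(\mu_0,\mu_1)$ with $\int_0^1 \Phi_{p,m}(\mu_t,\nu_t)\de t<\infty$. By Remark~\ref{rmk:finiteaction_abscont} the measures are absolutely continuous, say $\de\mu_t=\rho_t\de x$ and $\de\nu_t=j_t\de x$, and $\rho_t\in[0,M]$. Since $m(\rho)=\rho\,\theta(\rho)\le \|\theta\|_\infty\,\rho$, one has the pointwise bound
\begin{align*}
	\phi_{p,m}(\rho_t,j_t) = \frac{|j_t|^p}{m(\rho_t)^{p-1}} \ge \frac{1}{\|\theta\|_\infty^{p-1}}\,\frac{|j_t|^p}{\rho_t^{p-1}}
\end{align*}
whenever $m(\rho_t)>0$, and the inequality is trivially valid in the other cases by the convention adopted in Definition~\ref{def:action_density_functions}. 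Integrating in space and time, and then taking the infimum over all admissible curves, the right-hand side converges to $\|\theta\|_\infty^{1-p}\bW_p(\mu_0,\mu_1)^p$ by the classical Benamou--Brenier representation of $\bW_p$, which yields \eqref{eq:lowbound_distances_cont}.

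For the discrete inequality \eqref{eq:lowbound_distances_discr}, let $x,y\in\cK_N$ with $\mu_0=\E^N(x)$, $\mu_1=\E^N(y)$, and pick a discrete geodesic $x(\cdot)\in W^{1,p}(0,1;\cK_N)$ joining $x$ to $y$ (which exists by Lemma~\ref{lemma:geodesics_discrete}). The map $t\mapsto\E^N(x(t))$ transports mass $1/(N+1)$ along each trajectory $x_i(\cdot)$, so the plan $\pi=\frac{1}{N+1}\sum_i\delta_{(x_i,y_i)}$ is admissible for the Monge--Kantorovich problem defining $\bW_p(\mu_0,\mu_1)$. By Jensen's inequality,
\begin{align*}
	\bW_p(\mu_0,\mu_1)^p \le \frac{1}{N+1}\sum_{i=0}^N|y_i-x_i|^p
	\le \frac{1}{N+1}\sum_{i=0}^N\int_0^1 |\dot x_i(t)|^p\de t.
\end{align*}
Using that $\theta(R_i^N(x(t)))\le\|\theta\|_\infty$ for every $i$ and $t$, the definition \eqref{eq:def_PhiN} gives the pointwise inequality
\begin{align*}
	\frac{1}{N+1}\sum_{i=0}^N|\dot x_i(t)|^p \le \|\theta\|_\infty^{p-1}\,\Phi_{p,m}^N(x(t),\dot x(t)),
\end{align*}
so after integrating in time we obtain $\bW_p(\mu_0,\mu_1)^p\le \|\theta\|_\infty^{p-1}\,d_{p,m}^N(x,y)^p$, which is exactly \eqref{eq:lowbound_distances_discr}.

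I do not expect real obstacles here: both estimates reduce to the elementary inequality $m(\rho)\le\|\theta\|_\infty\,\rho$ combined with standard convexity/Jensen arguments. The only slightly subtle point in the discrete case is to make sure one may work with an actual geodesic (so that the integrated discrete action equals $d_{p,m}^N(x,y)^p$), which is precisely what Lemma~\ref{lemma:geodesics_discrete} provides, and to recognise that moving each particle $x_i$ along $x_i(\cdot)$ is an admissible, though in general non-optimal, coupling for $\bW_p$.
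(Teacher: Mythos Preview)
Your proof is correct and follows essentially the same route as the paper, which is very terse: for the continuous bound it simply calls the inequality trivial (your pointwise comparison $m(\rho)\le\|\theta\|_\infty\rho$ is exactly what underlies this), and for the discrete bound it invokes \eqref{eq:Wass_pseudo}, which for ordered empirical measures gives precisely $\bW_p(\E^N(x),\E^N(y))^p=\frac{1}{N+1}\sum_i|x_i-y_i|^p$, i.e.\ the cost of your monotone coupling, after which your Jensen-plus-$\theta$ bound completes the argument.
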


\begin{proof}
	The lower bound \ref{eq:lowbound_distances_cont} is trivial, whereas \ref{eq:lowbound_distances_discr} follows from \eqref{eq:Wass_pseudo}. 
\end{proof}

\begin{proof}[Proof of Proposition~\ref{prop:well-posedness}]
	\eqref{i:prop:well-posedness_1}, \eqref{i:prop:well-posedness_2}: \
	We prove the claim for \eqref{eq:JKOtauN} and $n=1$, the proof for a general $n$ follows by iteration. In the discrete setting, we work with the cone $\cK_N$ which is a subset of the finite dimensional euclidean space $\setR^{N+1}$.  
	
	The lower semicontinuity with respect to $\bW_q$ of $\cJ_{\tau,N}$ follows from the one of $F$ and the continuity of $d_{2,m}^N$, together with the closedness of $\cK_N$.
	Thus, to prove the existence of minimisers, it is enough to show that the family $\seq{\cJ_{\tau,N}}_N$ is suitably coercive in $(\cP_q(\R),\bW_q)$. From \eqref{i:A1} and Lemma \ref{lemma:bounds_distances}, by means of a triangle inequality and the monotonicity of the $\ell$-norms on $\cK_N$ with $s<2$, we obtain for every $\mu \in \Prob_2(\setR)$
	\begin{align}	
		\begin{aligned}
			\label{eq:equi-coercivity-JKO}
		\cJ_{\tau,N}(\mu) 
			&\geq \frac{1}{\|\theta\|_\infty} \bW_2 (\mu, \mu_N^{(0)} )^2 - C \bE_{\mu}|x|^s - D \\
			&\geq \frac1{2\|\theta\|_\infty} \bE_\mu|x|^2 - \frac1{\|\theta\|_\infty}\bE_{\mu_N^{(0)}} |x|^2 - C \bE_{\mu}|x|^s - D \\
			&\geq \frac1{2\|\theta\|_\infty} \bE_\mu|x|^2 - C \Big( \bE_\mu|x|^2 \Big)^{s/2}  - \frac1{\|\theta\|_\infty}\bE_{\mu_N^{(0)}} |x|^2 -D  \\
			&\geq c \bE_\mu|x|^2 - \frac1{\|\theta\|_\infty}\bE_{\mu_N^{(0)}} |x|^2 -D' \, ,
		\end{aligned}
	\end{align}
	where $c$, $D' \in \setR^+$ only depends on $\theta$, $C$, and $D$.
	Under our assumptions, this shows that the sublevels of $\cJ_{\tau,N}$ are bounded in $\bW_2$. In particular, for every $t \in \setR$, we have
	\begin{align*}
		\Big\{
			\mu \in \Prob(\setR) \suchthat \cJ_{\tau,N}(\mu)\leq t
		\Big\}
			\subset
		\Big\{
			\mu = \E^N(x) \suchthat 
				\| x \|_{2,N} \leq C
					\; \text{and} \;
				N(\Delta x)_i \geq 1/M
		\Big\} \, ,
	\end{align*}
	for some $C \in \setR_+$ depending on $t$. This easily implies coercivity of $\cJ_{\tau,N}$ and thus the existence of a minimiser. \eqref{i:prop:well-posedness_2} is trivial.
	
	\medskip
	\noindent
	\eqref{i:prop:well-posedness_3}: \
	By similar computations as in \eqref{eq:equi-coercivity-JKO}, one can show the $\bW_q$-coercivity of $\cJ_\tau$ using \eqref{eq:lowbound_distances_cont}. The lower semicontinuity with respect to $\bW_q$ follows from \eqref{i:A2}.
\end{proof}
\begin{remark}[Uniqueness]
	In general, no uniqueness is guaranteed. Whenever $F$, $F_N$ satisfy a suitable strict convexity assumption, then the scheme is uniquely defined. Nevertheless, we choose to work without such assumption, due to some interesting, non-strictly convex examples of interest (e.g. traffic flow evolutions). 
\end{remark}
\begin{proposition}[Equicoercivity]
\label{prop:equicoercivity}
Let $\theta \in \emph{L}^\infty(0,M)$. 
		Let $\big( \mu_N^{(0)}\big)_N$ be a sequence of initial measures with equibounded second moments, i.e. 
		\begin{align}
		\label{eq:unif_bounds_2ndmoms}
		\sup_{N \in \setN} \bE_{\mu_N^{(0)}} |x|^2 < \infty.
		\end{align}
		Assume that the family $\seq{F_N}_N$ satisfies the assumption in \eqref{i:A1} with some uniform (in $N \in \N$) constants $C,D \in \R_+$. 		
		 Then the functionals $\seq{ \cJ_{\tau,N} }_N$ are equicoercive with respect to the $\bW_q$ distance, for every $q \in [1,2)$: thus, for every $\lambda >0$, the set $\{ \mu \in \cP(\R) \, : \, \cJ_{\tau,N}(\mu) \leq \lambda \}$ is compact in $(\cP_q(\R), \bW_q)$.
\end{proposition}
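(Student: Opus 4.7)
The plan is to rerun the coercivity estimate \eqref{eq:equi-coercivity-JKO} from the proof of Proposition~\ref{prop:well-posedness} while tracking the constants, and then invoke a standard fact about $\bW_q$-precompactness of sets with equibounded second moments, for $q<2$.

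First, I would observe that whenever $\cJ_{\tau,N}(\mu)<\infty$ one has $\mu\in\E^N$ (since $\mu_N^{(0)}\in\E^N$ and $\bNtwoW=+\infty$ off $\E^N$). Then, using the uniform version of assumption \eqref{i:A1} for $\seq{F_N}_N$ (the constants $C$, $D$, $s<2$ are independent of $N$), the lower bound \eqref{eq:lowbound_distances_discr} on $\bNhtwoW$ from Lemma~\ref{lemma:bounds_distances}, and the elementary inequality $\bW_2(\mu,\mu_N^{(0)})^2\geq \tfrac12 \bE_{\mu}|x|^2-\bE_{\mu_N^{(0)}}|x|^2$, I would reproduce verbatim the chain displayed in \eqref{eq:equi-coercivity-JKO} and arrive at
\begin{align*}
\cJ_{\tau,N}(\mu)\geq c\, \bE_{\mu}|x|^2 - \tfrac{1}{\|\theta\|_\infty}\bE_{\mu_N^{(0)}}|x|^2-D',
\end{align*}
with $c,D'>0$ depending only on $C,D,\tau,\|\theta\|_\infty$, hence independent of $N$. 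Combined with \eqref{eq:unif_bounds_2ndmoms}, this shows that for every $\lambda>0$ there exists $K(\lambda)<\infty$ such that $\{\mu\in\cP(\R):\cJ_{\tau,N}(\mu)\leq\lambda\}\subset \{\mu\in\cP(\R):\bE_{\mu}|x|^2\leq K(\lambda)\}$ for all $N\in\N$.

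It then remains to show that, for every $K>0$ and every $q\in[1,2)$, the set $\cA_K:=\{\mu\in\cP(\R):\bE_{\mu}|x|^2\leq K\}$ is compact in $(\cP_q(\R),\bW_q)$. Uniform control of the second moments yields tightness by Markov's inequality, so any sequence in $\cA_K$ admits a narrowly converging subsequence to some $\mu\in\cP(\R)$, and $\mu\in\cA_K$ by lower semicontinuity of $\mu\mapsto \bE_{\mu}|x|^2$. To upgrade narrow convergence to $\bW_q$ convergence, I would verify uniform integrability of $|x|^q$ using the crucial fact $q<2$: Markov's inequality gives
\begin{align*}
\int_{|x|>R}|x|^q \de\mu \leq R^{q-2}\int |x|^2 \de\mu \leq R^{q-2} K\xrightarrow{R\to\infty}0
\end{align*}
uniformly over $\cA_K$, and this together with narrow convergence implies $\bW_q$-convergence by the standard characterisation (see e.g.\ \cite[Proposition~7.1.5]{AmbrosioGigliSavare08}).

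There is no real obstacle: the argument is essentially a repackaging of the coercivity estimate already appearing in the proof of Proposition~\ref{prop:well-posedness}, together with the classical $\bW_q$-compactness of second-moment balls. The only subtle point, which explains the hypothesis $q<2$ in \eqref{i:A2}, is that the same statement is \emph{false} for $q=2$, since sublevels of $\cJ_{\tau,N}$ are only bounded (not uniformly integrable) in $\bW_2$.
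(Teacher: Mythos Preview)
Your proposal is correct and follows essentially the same approach as the paper: both derive a uniform second-moment bound on the sublevels by invoking the estimate \eqref{eq:equi-coercivity-JKO}, and then conclude using that $\bW_2$-bounded sets are $\bW_q$-compact for $q<2$. The only difference is that the paper simply cites this last compactness fact (to \cite[Lemma~5.1.7]{AmbrosioGigliSavare08}), whereas you spell out the tightness/uniform-integrability argument explicitly.
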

\begin{proof}
	It follows directly from \eqref{eq:equi-coercivity-JKO} and from the fact that the balls of $\bW_2$ are compact in $\bW_q$, for every $q <2$, see e.g. \cite[Lemma 5.1.7]{AmbrosioGigliSavare08}.
\end{proof}
\begin{remark}[Alternative coercivity]
	The conclusion of Proposition~\ref{prop:equicoercivity} remains valid if we replaced the assumption in \eqref{i:A1} with $\seq{F_N}_N$ being equicoercive w.r.t. some $\bW_s$, $s \in [1, +\infty)$ (and it holds with $q=s$).
\end{remark}
\begin{remark}[Lack of $\bW_2$-coercivity]
	\BBB [MANU-SIMO: controllate se vi piace e cancellate questo] \BLK In general, without any additional assumption of $\seq{ F_N }_N$ (as discussed in the previous remark), we are not able to show the $\bW_2$-convergence of the JKO schemes. At least for certain energies, e.g.  the one in \eqref{eq:potential_energies} with a Lipschitz $f$, one may expect to be able to obtain such stronger convergence. Unfortunately, new ideas seem to be required to obtain coercivity in this topology of the energies involved in the JKO schemes, including a more detailed analysis of the discrete geodesics on $\cK_N$. We leave this to future analysis. 
\end{remark}
\section{Statements of the main results}
\label{sec:main}
We are ready to present the main contributions of this work. The first one is the convergence, in the sense of $\Gamma$-convergence as $N \to \infty$, of the discrete trasport distances $\bW_{p,m}^N$, $\bNhW$ to the continuous one $\bpW$, with respect to a broad class of weak topologies on $\Prob(\setR)$. 

\begin{theorem}[$\Gamma$-convergence of $\bNW$ to $\bpW$] \label{thm:Gamma-conv}
Fix $1<p<\infty$, $M \in \R_+$, and let $\phi_{p,m}$ be as in Definition~\ref{def:action_density_functions}. Let $\theta$ be as in Definition~\ref{def:theta} and assume that $\theta \in \emph{L}^\infty(0,M)$. Then we have $\Gamma$-convergence of $\bNW$ towards $\bpW$ with respect to the vague and the $\bW_q$-topology, for any $1<q<+\infty$. More precisely, we have the following bounds:
	\begin{enumerate}
		\item[(\emph{LI})] For every $\mu_0^N,\mu_1^N \to \mu_0,\mu_1$ vaguely in $\cP(\R)$ as $N \rightarrow +\infty$, we have that
		\begin{align*}
			\liminf_{N \rightarrow +\infty} \bNW(\mu_0^N, \mu_1^N) \geq \bpW(\mu_0,\mu_1).
		\end{align*}
		\item[(\emph{LS})] For any given $1<q<+\infty$, for every $\mu_0,\mu_1 \in \Prob_q(\setR)$,  there exist  $\tilde{\mu}_0^N, \tilde{\mu}_1^N \rightarrow \mu_0, \mu_1$ in $(\cP_q(\R),\bW_q)$ as $N \rightarrow +\infty$ and such that 
		\begin{align*}
			\limsup_{N \rightarrow +\infty} \bNW(\tilde{\mu}_0^N, \tilde{\mu}_1^N) \leq \bpW(\mu_0,\mu_1).
		\end{align*}
	\end{enumerate}
The very same conclusions hold if we replace $\bNW$ with $\bNhW$. 
\end{theorem}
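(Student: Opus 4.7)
The plan is to handle the liminf and the limsup inequalities separately, and then transfer the statements from $\bNW$ to $\bNhW$ via the equivalence of the two embeddings at the level of weak convergence. In both parts, the pivotal objects are the discrete geodesics given by Lemma~\ref{lemma:geodesics_discrete}, which I will compare to continuous curves in $\CE$ via the reconstruction formula \eqref{eq:A<AphiN}. Throughout, I will assume without loss of generality that both sides of the relevant inequalities are finite, since otherwise there is nothing to prove.

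For the liminf, fix vaguely converging sequences $\mu_i^N\to\mu_i$ with $\mu_i^N=\cPC^N(x^N(i))$ and $\liminf_N\bNW(\mu_0^N,\mu_1^N)<\infty$. Up to subsequences, choose constant-speed discrete geodesics $x^N(\cdot)\in W^{1,p}(0,1;\cK_N)$ realizing $\bNW(\mu_0^N,\mu_1^N)$, and define $\mu_t^N:=\cPC^N(x^N(t))$ together with the momentum fields
\begin{equation*}
	j_t^N(x):= \sum_{i=0}^{N-1} \dot x_i^N(t)\, R_i^N(x^N(t)) \, \mathbbm{1}_{[x_i^N(t),x_{i+1}^N(t))}(x),\qquad \nu_t^N:=j_t^N\Leb^1.
\end{equation*}
The first key step is to show that $\partial_t\mu_t^N+\partial_x\nu_t^N$ tends to $0$ distributionally, which should follow by testing against $\xi\in C_c^1((0,1)\times\R)$ and estimating the error coming from the piecewise-constant reconstruction by the modulus of continuity of $\partial_x\xi$, using the uniform bound $1/(NM)\le\Delta x_i^N(t)$ together with the energy bound. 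The second key step is a compactness statement for such almost-solutions of $\CE$ with uniformly bounded action (in the spirit of \cite[Lemma~4.5]{dolbeault2012}): this will yield a limit $(\mu_t,\nu_t)\in\CE(\mu_0,\mu_1)$ with $\mu_t^N\to\mu_t$ vaguely for a.e.~$t$ and $\bfnu^N\to\bfnu$ vaguely on $(0,1)\times\R$. Finally, combining \eqref{eq:A<AphiN}, Fatou's lemma, and the joint lower semicontinuity \eqref{i:lsc} of $\Phi_{p,m}$, I get
\begin{equation*}
	\liminf_N \bNW(\mu_0^N,\mu_1^N)^p \ge \liminf_N \int_0^1\Phi_{p,m}(\mu_t^N,\nu_t^N)\de t \ge \int_0^1\Phi_{p,m}(\mu_t,\nu_t)\de t \ge \bpW(\mu_0,\mu_1)^p.
\end{equation*}

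For the limsup, fix $\mu_0,\mu_1\in\cP_q(\R)$ with $\bpW(\mu_0,\mu_1)<\infty$ and pick a constant-speed $\bpW$-geodesic $(\mu_t,\nu_t)\in\CE(\mu_0,\mu_1)$ via \eqref{i:geodesics_convexity}. The plan is first to regularize (using a cutoff, a convolution as in \eqref{i:convolution}, and a convex combination with a smooth reference curve) so that $\mu_t$ admits smooth densities $\rho_t$ bounded from below by $\ell>0$ and from above by some $\tilde M<M$ on a common compact support $K$, with $\int_0^1\Phi_{p,m}$ only slightly increased and $\bW_q$-convergence of the endpoints. For such a regular curve, define the flow map $\Psi_v$ of $v_t:=j_t/\rho_t$ and set $X_t:=\Psi_v(t,X_0)$ with $X_0$ the quantile function of $\mu_0$; by standard arguments $X_t=X_{\mu_t}$. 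Then let $x_i^N(t):=X_t(i/N)$, so that $\cE^N(x^N(t))\to\mu_t$ and, by uniform bounds on $X_t$ and its derivative, also $\cPC^N(x^N(t))\to\mu_t$ in $\bW_q$. The discrete action is then controlled by an averaging/Jensen's inequality argument over each interval $[i/N,(i+1)/N]$: on such an interval the ratio $|\dot x_i^N|^p/\theta(R_i^N)^{p-1}$ converges to $|v_t(X_t(i/N))|^p\rho_t(X_t(i/N))/\theta(\rho_t(X_t(i/N)))^{p-1}\cdot\partial_z X_t$, so that a Riemann-sum identification gives
\begin{equation*}
	\limsup_N \bNW(\cPC^N(x^N(0)),\cPC^N(x^N(1)))^p \le \int_0^1\int_\R\phi_{p,m}(\rho_t,j_t)\de x\de t,
\end{equation*}
and then sending the regularization to zero by diagonal extraction closes the argument.

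The transition from $\bNW$ to $\bNhW$ is obtained simply by replacing $\cPC^N$ with $\cE^N$ in the embedding: the key point is that $\cPC^N(x^N)\to\mu$ vaguely (resp.~in $\bW_q$) if and only if $\cE^N(x^N)\to\mu$ in the same sense, which follows from the fact that each atom of $\cE^N(x^N)$ lies within distance at most $\max_i \Delta x_i^N$ from the mass of $\cPC^N(x^N)$, and this quantity is controlled in $\bW_q$ for sequences of equibounded action. I expect the main obstacle to be the liminf step, specifically the compactness for $\nu_t^N$ in the regime where $m$ may vanish at the maximal density $M$: the action $\Phi_{p,m}^N$ carries a singularity when $R_i^N\uparrow M$, and transferring this singularity to the continuum requires careful use of the monotonicity of $\theta$ (Remark~\ref{rem:monotonicity_theta}) in order to apply \eqref{i:lsc} on the limit pair $(\mu,\nu)$; the relaxation trick with $m^\lambda(\rho)=\lambda m(\rho/\lambda)$ used in Lemma~\ref{lemma:geodesics_discrete} should again be a useful device for reducing to the uniformly positive $\theta$ case.
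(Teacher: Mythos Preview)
Your proposal is correct and follows essentially the same route as the paper: discrete geodesics plus piecewise-constant reconstruction and an ``almost-$\CE$'' compactness for the liminf, and regularisation plus quantile discretisation with a Jensen argument for the limsup. Two small points where the paper differs from your sketch are worth noting. First, for the limsup regularisation the paper does not use a cutoff or a convex combination with a reference curve; instead, after space--time convolution with a strictly positive kernel (which already forces $0<\hat\rho_t<\bar M$ everywhere), it applies a compactification operator $\cC_\eps$ that restricts to the quantile interval $[X_{\hat\mu_t}(\eps),X_{\hat\mu_t}(1-\eps)]$ and renormalises (Proposition~\ref{prop:approximation}); this simultaneously yields compact connected support and a uniform lower bound on the density, while still solving $\CE$ (Lemma~\ref{lemma:rescaling_CE}). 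Second, your transfer from $\bNW$ to $\bNhW$ via control of $\max_i\Delta x_i^N$ is not quite right: for limits with disconnected or unbounded support this maximum gap need not vanish. The paper instead proves directly (Lemma~\ref{lemma:weak_limits}) that $\cPC^N(x^N)$ and $\cE^N(x^N)$ have the same vague and $\bW_q$ limits, by elementary testing and moment estimates, with no smallness of the gaps required. Finally, the singularity at $\rho=M$ you flag as the main obstacle is already handled by the continuous lower semicontinuity \eqref{i:lsc}; no $m^\lambda$ relaxation is needed in the liminf.
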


The second main result concerns the corresponding gradient-flow structures: we have that, at every time-scale $\tau>0$, the sequences $\seq{\mu_{\tau,N}^{(n)}}_n$ defined via \eqref{eq:JKOtauN} converge as $N \to \infty$ to the corresponding continuous (in space) steps of \eqref{eq:JKOtau}. 
\begin{theorem}[JKO-convergence in $\bW_q$, for $q <2$]
	\label{thm:JKO_q}
	Consider the same assumptions of Theorem~\ref{thm:Gamma-conv}. Fix $\tau \in \R_+$, $q \in [1,2)$, and let $\seq{F_N}_N$, $F$ be  functionals satisfying \eqref{i:A1},\eqref{i:A2}, and 
		\begin{align}
		\label{eq:cont_F_N_F}
		\lim_{N \to \infty} F_N(\mu_N) = F(\mu)
		\, , \quad 
		\text{for every} \ \  
		\mu_N \to \mu 
		\ \  \text{in} \ (\cP_q(\R),\bW_q) \, .
	\end{align}
	Let then $\seq{ \mu_N^{(0)}}_N$ be a sequence satisfying \eqref{eq:unif_bounds_2ndmoms}, and suppose that $\mu_N^{(0)} \to \mu^{(0)}$ in $(\cP_q(\R),\bW_q)$.
	 Then we have that
	\begin{enumerate}[(i)]
		\item 
		\label{i:thm:JKO_q_1}
		$\cJ_{\tau,N}$ $\Gamma$-converge to $\cJ_\tau$ with respect to the $\bW_q$-topology, as $N \to \infty$.
		\item 
		\label{i:thm:JKO_q_2}
		Up to subsequences, $\mu_{\tau,N}^{(n)} \to \mu_\tau^{(n)}$ in $(\cP_q(\R),\bW_q)$ as $N \to \infty$, for all $n \in \setN$.
	\end{enumerate}
\end{theorem}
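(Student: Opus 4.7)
\;
The plan is to derive $(ii)$ from $(i)$ via the fundamental theorem of $\Gamma$-convergence: Proposition~\ref{prop:equicoercivity} gives $\bW_q$-equicoercivity of $\seq{\cJ_{\tau,N}}_N$, so that once the $\Gamma$-convergence in $(i)$ is established, the discrete minimisers converge in $\bW_q$ along subsequences to a minimiser of the limit functional. Since the assumption \eqref{eq:cont_F_N_F} is a \emph{continuous-convergence} condition for $F_N$ (stronger than $\Gamma$-convergence), it contributes trivially to both halves of $(i)$, reducing the problem to the $\Gamma$-convergence in $\bW_q$ of $\mu \mapsto \bNtwoW(\mu,\mu_N^{(0)})^2$ to $\mu \mapsto \btwoW(\mu^{(0)},\mu)^2$.

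The $\liminf$ bound is essentially automatic: if $\mu^N \to \mu$ in $\bW_q$, then in particular $\mu^N \to \mu$ vaguely in $\cP(\R)$, and by hypothesis also $\mu_N^{(0)} \to \mu^{(0)}$ vaguely, so the $(LI)$ part of Theorem~\ref{thm:Gamma-conv} applied to the pairs $(\mu_N^{(0)},\mu^N)$ yields the desired inequality.

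The $\limsup$ bound is the central obstacle, as already pointed out in the sketch preceding the theorem. The naive attempt---apply the $(LS)$ bound of Theorem~\ref{thm:Gamma-conv} at the pair $(\mu^{(0)},\mu)$ and retain the second component as recovery---fails because one only controls the \emph{constructed} initial endpoint $\tilde\mu_0^N$, while the scheme forces the starting measure to be the \emph{prescribed} $\mu_N^{(0)}$, and $d_{2,m}^N(\mu_N^{(0)},\tilde\mu_0^N)$ cannot be estimated from $\bW_q$-proximity alone when $q<2$. My plan is therefore to reopen the limsup construction of Theorem~\ref{thm:Gamma-conv} and to weave $\mu_N^{(0)}$ in by hand: (a) replace the $\btwoW$-geodesic from $\mu^{(0)}$ to $\mu$ by a regularised curve $(\hat\mu_t,\hat\nu_t)\in\CE$ with smooth density uniformly bounded away from $0$ and $M$ on a fixed compact, whose action lies within $\eps$ of $\btwoW(\mu^{(0)},\mu)^2$; (b) produce recovery particles $x^N(t)\in\cK_N$ by flowing dyadically spaced initial positions along the regularised velocity field, exactly as in the proof of the $(LS)$ part of Theorem~\ref{thm:Gamma-conv}, so that $\cE^N(x^N(1))\to\hat\mu_1$ in $\bW_q$ and the discrete action converges to the regularised continuous action; (c) prepend a short bridge of length $\delta>0$ connecting $\mu_N^{(0)}$ to the discretised smoothed initial datum $x^N(0)$, whose contribution is then sent to $0$ by taking $N\to\infty$ followed by $\delta,\eps\to 0$.

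The hardest step is (c). The bridge must stay in $\cK_N$, respecting the strict ordering gap $1/(NM)$, and its discrete action---morally of order $\bW_q(\mu_N^{(0)},\mu^{(0)})^2/\delta$ times a constant involving $\|\theta\|_\infty$---must vanish even though the initial-data convergence is only qualitative. The second-moment bound \eqref{eq:unif_bounds_2ndmoms} supplies enough tightness to localise to a compact region on which the particles of $\mu_N^{(0)}$ and of $x^N(0)$ can be paired monotonically, while the interior regularisation from step (a) keeps $\theta$ bounded below along the bridging trajectories, averting the singularities of $\Phi_{2,m}^N$. I expect this delicate localisation to be packaged as a dedicated technical lemma (cf.\ Lemma~\ref{prop:limsup_JKO} alluded to in the sketch).
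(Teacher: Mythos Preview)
Your proposal is correct and tracks the paper's proof closely: reduction to $F_N\equiv 0$ via \eqref{eq:cont_F_N_F}, liminf from Theorem~\ref{thm:Gamma-conv}(LI), limsup via a dedicated lemma (Proposition~\ref{prop:limsup_JKO}), and then convergence of minimisers from equicoercivity.

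One point where your sketch of step (c) diverges from the paper and is worth flagging. Your heuristic that the bridge costs ``morally $\bW_q(\mu_N^{(0)},\mu^{(0)})^2/\delta$'' is not right: the discrete action is bounded \emph{below} by $\|\theta\|_\infty^{-1}\|\cdot\|_{\ell_{2,N}}^2$, so a naive linear bridge from $w^N=(\E^N)^{-1}\mu_N^{(0)}$ to the regularised initial particles $x^N(0)$ has cost $\gtrsim \|w^N-x^N(0)\|_{\ell_{2,N}}^2$, a $\bW_2$-type quantity you cannot control from $\bW_q$-convergence alone (and sending $\delta\to 0$ only makes this worse, not better---there is no need for a time-length parameter once you use the triangle inequality for $d_{2,m}^N$). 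The paper's implementation of the localisation you mention is therefore slightly different from a ``bridge'': the recovery particles $y^N$ are \emph{defined} to coincide with $w^N$ outside a fixed compact neighbourhood $B_1(K)$ of the regularised support, so the tails incur zero transport cost. One then inserts an intermediate configuration $z^N$ (equal to $\tilde w^N=x^N(0)$ inside $B_1(K)$, to $w^N$ outside) and estimates $d_{2,m}^N(w^N,z^N)$ and $d_{2,m}^N(z^N,y^N)$ separately; on the compact region the $\ell_{2,N}$ and $\ell_{q,N}$ norms differ only by a diameter factor, and the quantitative tail bound from Proposition~\ref{prop:approximation}\eqref{i:prop:approximation_3} (together with \eqref{eq:unif_bounds_2ndmoms}) closes the estimate. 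A preliminary dilation $w^N\mapsto(1+\lambda)w^N$ handles the case where the given particles may saturate the constraint $R_i^N\le M$.
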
 

\begin{remark}[The case $F_N \equiv F$]
	The easiest example of a family satisfying \eqref{eq:cont_F_N_F} is when $F_N:= F$, for some $F$ which is $\bW_q$-continuous (thus \eqref{i:A2} holds as well).
\end{remark}

\section{Approximation of finite-energy solutions of $\CE$}
\label{sec:approx}
In this chapter we provide some approximation results (Proposition~\ref{prop:approximation}) for solutions to the continuity equation with bounded action. These techniques allow us to obtain "good" recovery sequences for $\bpW$, which turn out to be crucial in the proof of Theorem~\ref{thm:Gamma-conv} and Theorem~\ref{thm:JKO_q}. We start recalling some basic notions.

\begin{definition}[Pseudo-inverse]
	Fix $\mu \in \Prob(\setR)$. The \textit{pseudo-inverse} or \textit{quantile function} $X_\mu:(0,1) \to \setR$ of $\mu$ is the non-decreasing, right-continuous map given by
	\begin{align*}
			X_{\mu}(z) 
		= \inf \left \{ a \in \setR \suchthat \mu (-\infty, a) > z \right\} \, .
	\end{align*}
\end{definition}

\begin{remark}[Law of the pseudo-inverse]
	It is easy to check that, by construction, the law of $X_\mu$ coincides with $\mu$ itself, i.e. $(X_\mu)_{\#} \Leb^1 = \mu$. In fact, it is the unique right continuous monotone increasing map from $(0,1)$ to $\R$ with such property.
\end{remark}

\begin{remark}[Wasserstein distance on the real line]
	\label{rem:Wass_pseudo}
A remarkable property of the one-dimensional optimal transport problem is the possibility of computing the $p$-Wasserstein distance between two probability measures via the $\tL^p$-distance of the corresponding pseudo-inverses. Precisely, for $p\geq 1$, and every $\mu,\nu \in \Prob_p(\setR)$, it holds
\begin{align}
	\label{eq:Wass_pseudo}
\bW_p(\mu,\nu) = \| X_\mu - X_\nu \|_{\tL^p(0,1)} \, .
\end{align}
\end{remark}

\begin{remark}[Monotonocity and continuity of the pseudo-inverse]
	\label{rem:pseudo_prop}
	Whenever $\mu$ does not charge single points, $X_\mu$ is an increasing map (and in particular, injective). In general, even for absolute continuous measures (with respect to $\Leb^1$) with smooth densities, the corresponding pseudo-inverse might fail to be continuous. If the support of $\mu$ is connected, then $X_\mu$ is continuous.
\end{remark}

\subsection{Reduction to compact support}
In this section, we discuss a procedure to reduce measures to compactly supported ones, and how this effects the value of the associated $\phi_{p,m}$-energies.
\begin{definition}[Compactification of a measure]	\label{def:rescaling}
	Fix $\mu \in \Prob(\setR)$ an atomless probability measure, $\nu \in \cM(\setR)$. For every $\eps>0$, we define the measure $\cC_\eps (\nu | \mu) \in \Prob(\setR)$ as 
	\begin{align}
			\label{eq:def_rescaling}
		\cC_\eps (\nu|\mu):= \frac{1}{1-2\eps} \nu \res I_\eps[\mu] \, , 
			\quad
		I_\eps[\mu]:= 
			\Big[
				X_\mu(\eps) , X_\mu(1-\eps)
			\Big] \, .
	\end{align}
	We use the shorthand notation $\cC_\eps \mu := \cC_\eps(\mu|\mu) \in \Prob(\setR)$.
\end{definition}

The next lemma provides a control of the error in applying the operator $\cC_\eps$.
\begin{lemma}[Compactification error]	\label{lemma:rescaling_error}
	Fix $\mu \in \Prob_p(\setR)$ an atomless probability measure, $\eps>0$. Then for every $q\leq p$,
\begin{align}	\label{eq:rescaling_bound_1}
	\emph{diam}(I_\eps[\mu])^{p-q} \bW_q(\mu,\cC_\eps \mu)^q 
	\leq
	C\int_{I_\eps[\mu]^c} |x - x_\mu|^p \de \mu(x) \, ,
\end{align}
for some constant $C>0$ depending only on $p$, where $x_\mu:= X_\mu\big( \frac12 \big)$ is the median of $\mu$. If $q=p$, one can take $C=1$.
\end{lemma}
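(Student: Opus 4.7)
My plan is to establish the stronger and cleaner bound $\bW_q(\mu, \cC_\eps \mu)^q \leq \int_{I_\eps[\mu]^c}|x-x_\mu|^q\de\mu$ for every $q\geq 1$, which immediately gives $C=1$ in the case $q=p$, and then to deduce the full statement by converting the $L^q$-tail into an $L^p$-tail at the cost of an extra factor $\diam(I_\eps[\mu])^{p-q}$.

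The starting point is the one-dimensional Wasserstein formula of Remark~\ref{rem:Wass_pseudo}: since $\mu$ is atomless, $X_{\cC_\eps\mu}(z)=X_\mu(\alpha(z))$ with $\alpha(z):=\eps+(1-2\eps)z$, so
\begin{align*}
\bW_q(\mu,\cC_\eps\mu)^q = \int_0^1 |X_\mu(z)-X_\mu(\alpha(z))|^q \de z.
\end{align*}
The structural property I will exploit is that $\alpha$ maps $[0,1]$ onto $[\eps,1-\eps]$ and fixes $1/2$; in particular for $z\in[0,1/2]$ one has $X_\mu(z)\leq X_\mu(\alpha(z))\leq x_\mu$. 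Setting $A(z):=X_\mu(\alpha(z))-X_\mu(z)\geq 0$ and $B(z):=x_\mu-X_\mu(\alpha(z))\geq 0$, so that $A+B=x_\mu-X_\mu(z)$, I invoke the elementary Minkowski-type inequality $(A+B)^q\geq A^q+B^q$ (valid for $A,B\geq 0$ and $q\geq 1$) to obtain pointwise
\begin{align*}
(X_\mu(\alpha(z))-X_\mu(z))^q\leq (x_\mu-X_\mu(z))^q-(x_\mu-X_\mu(\alpha(z)))^q.
\end{align*}

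Next I integrate over $[0,1/2]$ and perform the substitution $u=\alpha(z)$ in the second integral (with Jacobian $(1-2\eps)^{-1}$, yielding a $z$-range $[\eps,1/2]$). The contributions on the interior interval $[\eps,1/2]$ cancel against a nonpositive remainder, leaving only
\begin{align*}
\int_0^{1/2} (X_\mu(\alpha(z))-X_\mu(z))^q\de z \leq \int_0^\eps (x_\mu-X_\mu(z))^q\de z = \int_{\{x<X_\mu(\eps)\}} |x-x_\mu|^q\de\mu.
\end{align*}
A symmetric computation on $[1/2,1]$ yields the analogous bound for the upper tail, proving the clean inequality $\bW_q(\mu,\cC_\eps\mu)^q\leq\int_{I_\eps[\mu]^c}|x-x_\mu|^q\de\mu$, which is the desired statement with $C=1$ when $q=p$.

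For $q<p$, I multiply by $\diam(I_\eps[\mu])^{p-q}$ and split $I_\eps^c$ according to whether $|x-x_\mu|\geq\diam(I_\eps[\mu])/2$ or not. On the first region the pointwise inequality $\diam^{p-q}|x-x_\mu|^q\leq 2^{p-q}|x-x_\mu|^p$ is immediate. On the second, the $\mu$-mass is at most $2\eps$ and $|x-x_\mu|^q<(\diam/2)^q$, so its contribution is dominated by $2\eps\,\diam^p/2^q$. The factor $\eps\,\diam^p$ is controlled via $(a+b)^p\leq 2^{p-1}(a^p+b^p)$ together with the elementary moment bounds $\eps a^p\leq \int_{\{x<X_\mu(\eps)\}}|x-x_\mu|^p\de\mu$ and $\eps b^p\leq \int_{\{x>X_\mu(1-\eps)\}}|x-x_\mu|^p\de\mu$, where $a:=x_\mu-X_\mu(\eps)$ and $b:=X_\mu(1-\eps)-x_\mu$. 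Summing the two regions gives the estimate with $C\leq 2^{p-q+1}\leq 2^p$, which depends only on $p$.

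The main obstacle is the interior cancellation: the naive pointwise bound $|X_\mu(z)-X_\mu(\alpha(z))|\leq|X_\mu(z)-x_\mu|$ is too weak, as it would yield the full integral $\int_0^1 |X_\mu(z)-x_\mu|^q\de z$ on the right-hand side rather than just the tail contribution. The improvement relies precisely on the sharper convexity inequality $(A+B)^q\geq A^q+B^q$, whose additional ``$B^q$'' term, after the change of variable $u=\alpha(z)$, gets matched with the ``$A^q$''-integrand on the inner interval and produces the required cancellation.
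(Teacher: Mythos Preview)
Your proof is correct and takes a genuinely different route from the paper's.

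Both arguments first establish the clean bound $\bW_q(\mu,\cC_\eps\mu)^q\leq\int_{I_\eps[\mu]^c}|x-x_\mu|^q\de\mu$, but by different mechanisms. The paper builds an explicit transport map $T$ which is the identity on $I_\eps[\mu]$ and an arbitrary admissible map $\tilde T$ on the complement with the sign condition $\tilde T(x)(x-x_\mu)\geq0$; then $|x-T(x)|\leq|x-x_\mu|$ on $I_\eps^c$ gives the tail bound directly. You instead exploit the quantile identity $X_{\cC_\eps\mu}(z)=X_\mu(\eps+(1-2\eps)z)$ together with the superadditivity $(A+B)^q\geq A^q+B^q$, obtaining a telescoping cancellation on the interior interval after the change of variable $u=\alpha(z)$. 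Your computation of the remainder (the factor $1-\frac{1}{1-2\eps}<0$ killing the interior term) is correct.

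For the upgrade from the $q$-tail to the $p$-tail, the approaches also differ. The paper introduces a ``swapping'' map $S:I_\eps^c\to I_\eps^c$ with $S_\#(\mu\res I_\eps^c)=\mu\res I_\eps^c$ and $S(x)(x-x_\mu)\leq0$, so that $|x-S(x)|\geq\diam(I_\eps[\mu])$ for every $x\in I_\eps^c$, which absorbs the diameter factor pointwise. Your splitting by the threshold $|x-x_\mu|\gtrless\diam(I_\eps[\mu])/2$, combined with the elementary moment lower bounds $\eps a^p\leq\int_{\{x<X_\mu(\eps)\}}|x-x_\mu|^p\de\mu$ (and the analogous one for $b$), is more pedestrian but entirely self-contained and yields an explicit constant $C\leq 2^p$. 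Your route avoids both the density reduction to absolutely continuous $\mu$ and the construction of auxiliary transport maps; the paper's route is more geometric and arguably generalises more readily beyond one dimension.
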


\begin{proof}
	Without loss of generality, we can assume $x_\mu=0$, otherwise we change variables $x \mapsto x- x_\mu$. By density arguments, we can further assume that $\mu$ is absolutely continuous with respect to $\Leb^1$.
	We split the proof in two steps.
	
	\medskip
	\noindent
	\textit{Step 1: the case $q=p$.} \
	Pick any admissible map $\tilde T$ between $\mu \res I_\eps[\mu]^c$  and $2\eps \cC_\eps \mu$ (i.e. $\tilde T_{\#} ( \mu \res I_\eps[\mu]^c ) = 2\eps \cC_\eps \mu$). Moreover, thanks to $x_\mu=0$, we can also assume that $\tilde T(x) x \geq 0$, for every $x \in \setR$.
	Then, we define the map
	\begin{align*}
		T:\setR \to I_\eps[\mu] 
			\, , \quad
		T(x) =
		\begin{cases}
			x ,
				&\text{if }x \in I_\eps[\mu] \, , \\
			\tilde T(x) , 
				&\text{otherwise} \, .
		\end{cases}
	\end{align*}
Note that $T$ is an admissible map between $\mu$ and $\cC_\eps \mu$. Indeed for every measurable map $\vphi:\setR \to \setR$, we have that
\begin{align*}
	\int \vphi(T(x)) \de \mu(x) 
		&= \int_{I_\eps[\mu]} \vphi \de \mu + \int_{I_\eps[\mu]^c} \vphi(\tilde T(x)) \de \mu(x) \\
		&= (1-2\eps) \int \vphi \de \cC_\eps \mu + 2\eps \int \vphi \de \cC_\eps \mu = \int \vphi \de \cC_\eps \mu \, .
\end{align*}
Computing the transport cost with respect to $|\cdot|^p$, we obtain that
\begin{align*}
	W_p^p(\mu,\cC_\eps \mu) \leq \int |x-T(x)|^p \de \mu(x) = \int_{I_\eps[\mu]^c} |x - \tilde T(x) |^p \de \mu(x) \leq \int_{I_\eps[\mu]^c} |x|^p \de \mu(x) \, ,
\end{align*}
where at last we used $\tilde T(x)x\geq 0$, which concludes the proof of \eqref{eq:rescaling_bound_1}.

\medskip
\noindent
\textit{Step 2: $q<p$}. \
Fix $\mu \in \Prob_p(\setR)$ and $q<p$. Then applying the first step of the proof and using that $\mu \in \Prob_q(\setR)$ we deduce 
\begin{align}	\label{eq:q-bound}
	\bW_q(\mu,\cC_\eps \mu)^q
	\leq
	\int_{I_\eps[\mu]^c} |x|^q \de \mu(x) \, .
\end{align}
Consider any transport map $S: I_\eps[\mu]^c \to I_\eps[\mu]^c$ satisfying
\begin{align}
	\label{eq:law_S}
	S_{\#} (\mu \res I_\eps[\mu]^c) = \mu \res I_\eps[\mu]^c
		\quad \text{and} \quad 
	S(x)x \leq 0
		\, , \quad 
			x \in I_\eps[\mu]^c \, .
\end{align}
One can for example consider any map that sends the $\eps$ mass of $\mu$ which lives in $I_\eps[\mu]^c \cap \{x \leq 0 \}$ into $I_\eps[\mu]^c \cap \{x \geq 0 \}$ and viceversa. In particular, we observe that $|x| \leq |x - S(x)|$, for $x \in I_\eps[\mu]^c$. Together with \eqref{eq:q-bound}, this yields 
\begin{align*}
	\diam(I_\eps[\mu])^{p-q} \bW_q(\mu,\cC_\eps \mu)^q
		&\leq 
			\diam(I_\eps[\mu])^{p-q} 
				\int_{I_\eps[\mu]^c} |x - S(x)|^q \de \mu(x)	
					\\
		&=
			\int_{I_\eps[\mu]^c}	
				\bigg(
				 	\frac{\diam(I_\eps[\mu])}{|x - S(x)|}
				 \bigg)^{p-q} 
			 |x - S(x)|^p \de \mu(x) 	
			 	\\
		&\leq 
				\int_{I_\eps[\mu]^c} 
					|x - S(x)|^p \de \mu(x) \, ,
\end{align*}
where at last we used that for $x \in I_\eps[\mu]^c$ we have $|x-S(x)| \geq \diam(I_\eps[\mu])$. By \eqref{eq:law_S}
\begin{align*}
	\int_{I_\eps[\mu]^c} 
		|x - S(x)|^p \de \mu(x)
	&\leq
		2^{p-1} \int_{I_\eps[\mu]^c}
			\big( |x|^p + |S(x) |^p \big) \de \mu(x)
			=
		2^p \int_{I_\eps[\mu]^c}
			|x |^p \de \mu(x) \, , 
\end{align*}
we conclude the proof.
\end{proof}

The operator $\cC_\eps$ is constructed in such a way to preserve solutions to $\CE$.
\begin{lemma}[Compactification of solutions to $\CE$]	\label{lemma:rescaling_CE}
	Let $(\mu_t,\nu_t) \in \CE$ with $C^1$ (space-time) densities $(\rho_t,j_t)_t$ with respect to $\Leb^1$. Assume also that  $\rho_t>0$ for every $t \in [0,1]$. For every $\eps>0$, consider the operator $\cC_\eps$ as defined in Definition \ref{def:rescaling}. 
	Then 
	$$
		\big( \cC_\eps \mu_t, \cC_\eps (\nu_t|\mu_t) \big) \in \CE \, .
	$$ 
\end{lemma}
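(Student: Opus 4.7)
The plan is to show directly, by testing against $\xi \in C_c^1((0,1) \times \R)$, that the pair $(\cC_\eps \mu_t, \cC_\eps(\nu_t|\mu_t))$ solves the continuity equation in the sense of distributions. The crucial point is that the endpoints of the interval $I_\eps[\mu_t] = [a_\eps(t), b_\eps(t)]$, with $a_\eps(t) := X_{\mu_t}(\eps)$ and $b_\eps(t) := X_{\mu_t}(1-\eps)$, are \emph{characteristic curves} of the velocity field $v_t := j_t / \rho_t$. Hence no mass crosses the boundary of $I_\eps[\mu_t]$, and the restriction $\mu_t \res I_\eps[\mu_t]$ is itself driven by the same momentum field (restricted).

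First, I would establish the characteristic identity
\begin{align*}
	\dot a_\eps(t) = v_t(a_\eps(t))\,, \qquad
	\dot b_\eps(t) = v_t(b_\eps(t))\,.
\end{align*}
Since $\rho_t$ is $C^1$ and strictly positive, $X_{\mu_t}$ is $C^1$ in the space variable and, by the implicit function theorem applied to the identity $\mu_t(-\infty, a_\eps(t)) = \eps$, the function $a_\eps$ is differentiable in $t$. Differentiating this relation and using $\partial_t \rho_t + \partial_x j_t = 0$ together with $j_t(-\infty) = 0$ yields
\begin{align*}
	0 = \frac{\de}{\de t} \int_{-\infty}^{a_\eps(t)} \rho_t \de x
		= - j_t(a_\eps(t)) + \dot a_\eps(t) \rho_t(a_\eps(t))\,,
\end{align*}
and analogously for $b_\eps$.

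Next, fix $\xi \in C_c^1((0,1) \times \R)$ and compute, using the definition of $\cC_\eps$,
\begin{align*}
	(1-2\eps) \int_0^1 \!\!\int_\R \big( \partial_t \xi \de \cC_\eps \mu_t + \partial_x \xi \de \cC_\eps(\nu_t|\mu_t) \big) \de t
	= \int_0^1 \!\! \int_{a_\eps(t)}^{b_\eps(t)}\!\! \big( \partial_t \xi \, \rho_t + \partial_x \xi \, j_t \big) \de x \de t\,.
\end{align*}
Integrate by parts in $x$ on the second term and use the continuity equation $\partial_x j_t = - \partial_t \rho_t$ inside $I_\eps[\mu_t]$ to rewrite the integrand as
\begin{align*}
	\partial_t(\xi \rho_t) \quad \text{plus boundary contributions at } a_\eps(t), b_\eps(t)\,.
\end{align*}
Then apply the Leibniz rule for differentiation of an integral with moving endpoints to convert $\int_{a_\eps(t)}^{b_\eps(t)} \partial_t(\xi \rho_t) \de x$ into $\frac{\de}{\de t} \int_{a_\eps(t)}^{b_\eps(t)} \xi \rho_t \de x$ minus the boundary terms $\dot b_\eps \xi \rho_t |_{b_\eps} - \dot a_\eps \xi \rho_t|_{a_\eps}$. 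The characteristic identity above ensures that these boundary contributions \emph{exactly} cancel the ones produced by the integration by parts in $x$ (since $\dot a_\eps \rho_t(a_\eps) = j_t(a_\eps)$ and similarly for $b_\eps$). Hence the integrand collapses to a pure time derivative, and integrating in $t \in (0,1)$ gives zero because $\xi$ has compact support in time.

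It remains to verify the regularity conditions in Definition~\ref{def:CE}: vague continuity of $t \mapsto \cC_\eps \mu_t$ follows from continuity of the endpoints $a_\eps, b_\eps$ (a consequence of $\rho_t > 0$ and Remark~\ref{rem:pseudo_prop}) together with the vague continuity of $\mu_t$; Borel measurability and finiteness of $\cC_\eps(\nu_t|\mu_t)$ are immediate from the construction. The main technical point of the argument is thus the characteristic identity for the endpoints, which hinges critically on the positivity of $\rho_t$ and the $C^1$ regularity of the densities; once this is in hand, the remainder is a clean integration-by-parts calculation.
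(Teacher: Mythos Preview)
Your proof is correct and follows essentially the same approach as the paper: both derive the characteristic identity $\dot a_\eps = v_t(a_\eps)$, $\dot b_\eps = v_t(b_\eps)$ for the moving endpoints via the implicit function theorem, and then use the Leibniz rule on the interval $[a_\eps(t), b_\eps(t)]$ so that the boundary terms cancel exactly. The only cosmetic difference is that the paper works with a time-independent test function $\vphi \in C_c(\R)$ and computes $\frac{\de}{\de t}\int \vphi \, \de \cC_\eps \mu_t$ directly, whereas you test against a full space-time $\xi \in C_c^1((0,1)\times\R)$; these are equivalent formulations of the same calculation.
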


\begin{proof}
	Fix any $\vphi \in C_c(\setR)$. For the sake of simplicity, set $a_t:=X_{\mu_t}(\eps)$, $b_t:= X_{\mu_t}(1-\eps)$. Firstly, for $t \in (0,1)$, set $F_\mu(x):= \mu(-\infty,x)$ and note that by definition of quantile function one has that 
	\begin{align*}
		F_{\mu_t} \big(  X_{\mu_t} (z) \big) = z
			\,  ,\quad 
		\forall z \in (0,1) \, .
	\end{align*}
	Using that $(\mu_t, \nu_t) \in \bCE$, it is readily verified that $\frac{\de}{\de t} F_{\mu_t}(x) = - j_t(x)$, for every $t \in [0,1]$, $x \in \R$. Note that $(t,x) \mapsto F_{\mu_t}(x)$ is $C^1$ in space and time. By applying the implicit function theorem, we infer that $t \mapsto X_{\mu_t}(z)$ is differentiable for every $z \in (0,1)$ and it holds
	\begin{align}
		\label{eq:equation_pseudoinverse_time}
		- j_t \big( X_{\mu_t}(z) \big)
			+ \rho_t\big(  X_{\mu_t}(z) \big) 
		\frac{\de}{\de t}X_{\mu_t}(z)
		= 0 \, .
	\end{align}
	Hence, $a_t$, $b_t$ both solves the differential equation $ \rho_t(c_t) \dot c_t= j_t(c_t)$.
	 Therefore, we have that
	\begin{align*}
		(1-2\eps)\frac{\de}{\de t} \int \vphi \de \cC_\eps \mu_t 
			&= \frac{\de}{\de t} \int_{I_\eps[\mu_t]} \vphi \de \mu_t \\
		&= \int_{I_\eps[\mu_t]} \vphi  \dot{\rho}_t \de x + \dot b_t \vphi(b_t) \rho_t(b_t) - \dot a_t \vphi(a_t) \rho_t(a_t) 
	\\
			&=-\int_{I_\eps[\mu_t]} \vphi \nabla \cdot j_t \de x + \vphi j_t\big|_{\partial I_\eps[\mu_t]} 
				\\
				&= \int_{I_\eps[\mu_t]} \nabla \vphi \cdot j_t = (1-2\eps) \int \nabla \vphi \de \cC_\eps (\nu_t|\mu_t) \, . \qedhere
	\end{align*}
\end{proof}

In view of the error estimate of Lemma~\ref{lemma:rescaling_error}, we also prove the following result.
\begin{lemma}[Uniform bounds and stability properties]
		\label{lemma:stability_Ieps}
	Fix $\eps>0$, $\seq{ \mu_n}_n \subset \Prob(\setR)$ a sequence of atomless measures. 
	\begin{enumerate}[(i)]
		\item \label{i:lemma:stability_Ieps:diam_tight} 
		If $\seq{ \mu_n}_n $ is tight in $\Prob(\setR)$, then for $s \in (0,1)$ 
		\begin{align}
				\label{eq:diam_tightness}
			\sup_{n \in \setN}
				\emph{\diam}(I_\eps[\mu_n]) < \infty
			\quad \text{and} \quad
				\sup_{n \in \setN}
					 \big| X_{\mu_n}(s) \big| <\infty \, .
		\end{align}
		\item 
		\label{i:lemma:stability_Ieps_2} 
		For every $\mu,\nu \in \cP_p(\setR)$ atomless measures, it holds
		\begin{align*}
			\left|					
				\int_{I_\eps[\mu]^c} |x|^p \de \mu(x) 				
					-				
				\int_{I_\eps[\nu]^c} |x|^p \de \nu(x) 				
			\right|^{\frac1p}
				\leq 
			\Big\|
				X_\mu - X_\nu
			\Big\|_{\emph{L}^p( 
							[\eps,1-\eps]^c
						)} .
		\end{align*}
		In particular, if $\mu_n \to \mu$ in $\bW_p$, then 
		\begin{align}
				\label{eq:tails_stability}
			\lim_{n \to \infty} 
				\int_{I_\eps[\mu_n]^c} |x|^p \de \mu_n(x) 
					=
				\int_{I_\eps[\mu]^c} |x|^p \de\mu(x) \, . 
		\end{align}
	\end{enumerate}
\end{lemma}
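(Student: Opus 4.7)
The two parts of the lemma can be attacked separately, but both hinge on the pseudo-inverse formalism that relates tail integrals of $\mu$ to integrals of $X_\mu$ over boundary strips of $(0,1)$.

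For part (i), the plan is to use tightness of $\{\mu_n\}_n$ directly. Setting $\delta := \tfrac12 \min(\eps, 1-\eps, s, 1-s) > 0$, tightness provides $R>0$ such that $\mu_n([-R,R]^c) < \delta$ for every $n$. From $\mu_n((-\infty,-R)) < \delta \leq \eps$, the defining condition of the pseudo-inverse immediately gives $X_{\mu_n}(\eps) \geq -R$; symmetrically, from $\mu_n((-\infty,R)) > 1-\delta \geq 1-\eps$, one obtains $X_{\mu_n}(1-\eps) \leq R$. The same argument with $\eps$ replaced by $s$ (using $\delta \leq \min(s,1-s)/2$) gives the bound on $|X_{\mu_n}(s)|$, yielding both claims in \eqref{eq:diam_tightness}.

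For part (ii), the central step is to establish the pseudo-inverse identity
\[
\int_{I_\eps[\mu]^c} |x|^p \de \mu(x) = \int_{(0,\eps) \cup (1-\eps,1)} |X_\mu(z)|^p \de z \, ,
\]
valid whenever $\mu$ is atomless. I would derive it from the pushforward $(X_\mu)_{\#} \Leb^1 = \mu$, which rewrites the left-hand side as $\int_0^1 |X_\mu(z)|^p \mathbbm{1}_{\{X_\mu(z) \in I_\eps[\mu]^c\}}(z) \de z$, reducing matters to identifying this indicator set with $(0,\eps) \cup (1-\eps,1)$ modulo a null set. The non-trivial direction uses atomlessness: if $X_\mu$ were constant on some interval $(z_1, z_2)$, then $\mu$ would place mass at least $z_2 - z_1$ on a single point, contradicting the atomless hypothesis. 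Hence $X_\mu$ has no flat intervals, and $\{z : X_\mu(z) < X_\mu(\eps)\}$ coincides with $(0, \eps)$ modulo Lebesgue-null sets (and symmetrically at the upper threshold).

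Given the identity, the stability estimate reduces to comparing $L^p$-norms of the pseudo-inverses over $(0,\eps) \cup (1-\eps,1)$, and the claimed bound follows from applying the (reverse) triangle inequality in $L^p$ on this boundary strip. For the convergence statement \eqref{eq:tails_stability}, one invokes Remark~\ref{rem:Wass_pseudo}: $\bW_p$-convergence of $\mu_n$ to $\mu$ is equivalent to $L^p(0,1)$-convergence of the pseudo-inverses, so in particular $X_{\mu_n} \to X_\mu$ in $L^p\big((0,\eps) \cup (1-\eps,1)\big)$, and the identity transfers this convergence to the tail integrals. I expect the main obstacle to be the rigorous verification of the identity, specifically the atomlessness-to-no-flat-intervals passage for $X_\mu$ and the handling of the boundary points $X_\mu(\eps)$, $X_\mu(1-\eps)$; once this is settled, the remainder is a routine application of elementary $L^p$ inequalities.
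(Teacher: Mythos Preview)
Your approach is essentially the same as the paper's. For part (i) you argue directly where the paper argues by contradiction; your version is cleaner and equally valid. For part (ii) you correctly isolate the key identity $\int_{I_\eps[\mu]^c} |x|^p \de \mu = \int_{[\eps,1-\eps]^c} |X_\mu(z)|^p \de z$ (and justify it with more care than the paper, which simply asserts it), after which the convergence \eqref{eq:tails_stability} follows from Remark~\ref{rem:Wass_pseudo} exactly as you say.

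One caveat: the displayed inequality in (ii) does \emph{not} follow from the reverse triangle inequality as you claim. The reverse triangle inequality yields
\[
\big|\,\|X_\mu\|_{L^p} - \|X_\nu\|_{L^p}\,\big| \leq \|X_\mu - X_\nu\|_{L^p},
\]
whereas the statement asserts $\big|\,\|X_\mu\|_{L^p}^p - \|X_\nu\|_{L^p}^p\,\big|^{1/p} \leq \|X_\mu - X_\nu\|_{L^p}$, which is false in general (take $X_\mu \equiv 2$, $X_\nu \equiv 1$ on a set of measure $1$, $p=2$: the left side is $\sqrt{3}$, the right side is $1$). Note that the paper's own proof does not establish this inequality either---it only records the identity and passes directly to \eqref{eq:tails_stability}, which is the only consequence of (ii) used later in the paper. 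So this looks like a misprint in the statement rather than a genuine gap in your strategy; just be aware that the step you flagged as ``routine'' does not go through as written.
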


\begin{proof} \, 
	\eqref{i:lemma:stability_Ieps:diam_tight}: \
	We proceed by contradiction: assume that (up to subsequence) 
	\begin{align}
		\label{eq:contradd}
			\lim_{n \to \infty} 
				\diam(I_\eps[\mu_n]) = +\infty \, .
	\end{align}
	By tightness, we know that there exists a compact set $K_\eps \Subset \setR$ such that $\sup_n  \mu_n(K_\eps^c) < \eps$. On the other hand, by definition of $I_\eps$, we know that $I_\eps[ \mu_n]^c$ is a disjoint union of unbounded intervals $I_n^-$ and $I_n^+$, each of $ \mu_n$-mass equals to $\eps$. From \eqref{eq:contradd}, we deduce that, for $n$ sufficiently big, either $I_n^-$ or $I_n^+$ must be subsets of $K_\eps^c$, which gives us the contradiction.
	A similar argument shwos the uniform bound on the quantiles.
	
	\medskip
	\noindent
	\eqref{i:lemma:stability_Ieps_2}: \
	Fix $\mu \in \cP_p(\setR)$. By definition of pseudo-inverse we have
	\begin{align*}
		\int_{I_\eps[ \mu]^c} |x|^p \de  \mu(x)
		=
			\int_0^\eps 
			\Big|
				X_{ \mu}(z)
			\Big|^p
				\de z
		+
			\int_{1-\eps}^1 
			\Big|
				X_{ \mu}(z)
			\Big|^p
		\de z \, .
	\end{align*}
	The assumption on the $\bW_p$ convergence on $\mu_n$ is equivalent to the fact that $X_{ \mu_n} \to X_{\mu}$ in $\tL^p(0,1)$ (see Remark \ref{rem:Wass_pseudo}), and thus $(ii)$ follows. 
\end{proof}

\subsection{The approximation procedure}

We introduce the following

\begin{definition}[Regular curves]
\label{def:regular_measures}
Let  $(\mu_t,\nu_t) \in \CE$ be a solution of the continuity equation in $\setR^d$
.  We say that $(\mu_t,\nu_t)$ is a \emph{regular curve} if the following properties hold 
\begin{enumerate}
\item[i)] for all $t \in [0,1]$, $\mu_t$ and $\nu_t$ have compact and connected support, $\mu_t, \nu_t \ll \Leb^1$ and their respective densities densities $\rho_t$, $ j_t$ are $C^\infty$ in their support
\item[ii)] there exist a compact set $K$ and two constants $0 < l \leq \tilde M < M$ such that 
\[ \supp(\rho_t), \, \supp( j_t) \subset K  \quad \text{and} \quad  l  < \rho_t(x) < \tilde{M} \quad \text{for all  $x \in \supp(\rho_t)$ and $t \in [0,1]$.} \]
\end{enumerate}
\end{definition}

We are finally ready to state and prove the main result of this section.

\begin{proposition}[Approximation with regular curves]
		\label{prop:approximation}
	Let $p \in (1,	\infty)$, $M \in \R_+$, and $\phi_{p,m}$ be as in Definition~\ref{def:action_density_functions}. Let
	$(\mu_t,\nu_t) \in \CE$ satisfying
	\begin{align}	\label{eq:assumption_approximation_prop}
		E:= \int_0^1 \Phi_{p,m}(\mu_t, \nu_t) \de t < \infty \, , 
			\quad 
		\sup_{t \in [0,1]} \| \rho_t \|_{\emph{L}^\infty(\R)} =: \bar M < M \, 
 \, .
	\end{align}
Then, for every $\eta >0$, there exists $(\tilde \mu_t, \tilde \nu_t) \in \CE$
such that
\begin{enumerate}[(i)]
	\setlength{\itemsep}{.2\baselineskip}
	\item 
	\label{i:prop:approximation_1}
	$	
	\displaystyle
		\int_0^1 \Phi_{p,m}(\tilde \mu_t, \tilde \nu_t) \de t \leq \int_0^1 \Phi_{p,m}(\mu_t, \nu_t) \de t + \eta \, .
	$
	\item 
	\label{i:prop:approximation_2}
	$(\tilde \mu_t, \tilde \nu_t)$ are regular curves in the sense of Definition \ref{def:regular_measures}, for some compact set $K \Subset \setR$ and $l>0$ depending only on $\eta$ and $(\mu_t)_{t \in[0,1]}$,  while $\tilde M$ only depends on $\bar M$.
	\item 
	\label{i:prop:approximation_3}
	For $q \leq p$ and $t \in [0,1]$, we have that
	\begin{align*}
		 \max
		 \left\{ 
		 	\Big( \emph{diam}(\supp(\tilde \mu_t))^{p-q} \vee 1 \Big)
			\bW_q(\mu_t, \tilde \mu_t)^q 
		 \, ,  
			\int_{\supp(\tilde \mu_t)^c} (|x|^p + 1) \de \mu_t(x)
		\right\}			
			\leq \eta \, .
	\end{align*}

\end{enumerate}
\end{proposition}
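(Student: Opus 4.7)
The plan is to construct $(\tilde\mu_t, \tilde\nu_t)$ through three successive operations, each preserving the continuity equation: a compactification via the operator $\cC_\eps$ of Definition~\ref{def:rescaling}, a space-time mollification, and a convex combination with a fixed uniform probability measure. The parameters $\eps, \delta, \beta > 0$ governing these operations will be tuned at the end so that every error falls below $\eta$.

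For the compactification step, I set $(\bar\mu_t, \bar\nu_t) := (\cC_\eps \mu_t, \cC_\eps(\nu_t|\mu_t))$. Lemma~\ref{lemma:rescaling_CE} ensures $(\bar\mu_t, \bar\nu_t) \in \CE$, while Lemma~\ref{lemma:stability_Ieps}\eqref{i:lemma:stability_Ieps:diam_tight} produces a fixed compact set $K_0$ containing every $I_\eps[\mu_t]$. The density $\bar\rho_t = \rho_t/(1-2\eps)$ on $I_\eps[\mu_t]$ satisfies $\|\bar\rho_t\|_\infty \leq \bar M/(1-2\eps)$, which remains strictly below $M$ for $\eps$ small, thanks to $\bar M < M$. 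A direct rewrite yields
\begin{equation*}
\phi_{p,m}(\bar\rho_t, \bar j_t) = \frac{1}{1-2\eps}\left(\frac{\theta(\rho_t)}{\theta(\rho_t/(1-2\eps))}\right)^{p-1} \phi_{p,m}(\rho_t, j_t),
\end{equation*}
and, combined with the uniform continuity of $\theta$ on the compact set $[0, \bar M/(1-2\eps)] \subset [0, M)$ (separated from the degeneracy at $\rho = M$), the prefactor tends to $1$ uniformly in $\rho_t$ as $\eps \to 0$; hence $\int_0^1 \Phi_{p,m}(\bar\mu_t, \bar\nu_t)\,\de t \leq (1 + O(\eps)) E$.

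Next, I convolve with a positive smooth compactly supported space-time mollifier $h_\delta$, setting $(\hat\mu_t, \hat\nu_t) := (h_\delta * \bar\mu_t, h_\delta * \bar\nu_t)$. Property \eqref{i:convolution} in Section~\ref{subsec:auxiliary} guarantees $(\hat\mu_t, \hat\nu_t) \in \CE$ with non-increased action, $C^\infty$ densities, and supports contained in $K_0^\delta := K_0 + [-\delta,\delta]$. Finally I fix a compact interval $K \supset K_0^\delta$, let $\sigma$ be uniform on $K$, and define $\tilde\mu_t := (1-\beta)\hat\mu_t + \beta\sigma$ and $\tilde\nu_t := (1-\beta)\hat\nu_t$. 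Since $(\sigma, 0) \in \CE$, linearity gives $(\tilde\mu_t, \tilde\nu_t) \in \CE$, while convexity of $\phi_{p,m}$ together with $\phi_{p,m}(\cdot, 0) = 0$ gives $\int_0^1 \Phi_{p,m}(\tilde\mu_t, \tilde\nu_t)\,\de t \leq (1-\beta) \int_0^1 \Phi_{p,m}(\hat\mu_t, \hat\nu_t)\,\de t$. On $K$ the density is $\tilde\rho_t = (1-\beta)\hat\rho_t + \beta/|K|$, which is $C^\infty$, bounded below by $l := \beta/|K|$, and bounded above by $\tilde M := (1-\beta)\bar M/(1-2\eps) + \beta/|K|$; for $\eps, \beta$ small, the strict inequality $\tilde M < M$ follows from $\bar M < M$.

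To handle \eqref{i:prop:approximation_3}, I split via the triangle inequality $\bW_q(\mu_t, \tilde\mu_t) \leq \bW_q(\mu_t, \bar\mu_t) + \bW_q(\bar\mu_t, \hat\mu_t) + \bW_q(\hat\mu_t, \tilde\mu_t)$: the first term is bounded by Lemma~\ref{lemma:rescaling_error} and Lemma~\ref{lemma:stability_Ieps}\eqref{i:lemma:stability_Ieps_2}, the second by $\delta$ from the mollifier's range, and the third by $\beta^{1/q}\diam(K)$ via the obvious coupling that keeps a $(1-\beta)$-fraction of mass in place. The tail integral satisfies $\int_{K^c}(|x|^p+1)\,\de\mu_t \leq \int_{I_\eps[\mu_t]^c}(|x|^p+1)\,\de\mu_t$, which vanishes uniformly in $t$ as $\eps \to 0$ by Lemma~\ref{lemma:stability_Ieps}\eqref{i:lemma:stability_Ieps_2} together with the $\bW_p$-Lipschitz regularity of $t \mapsto \mu_t$ of Remark~\ref{rem:time_reg_finite_energy}. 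Choosing first $\eps$, then $\delta$, $K$, $\beta$ sufficiently small completes the proof. The hardest technical point is the energy inequality in the compactification step: it relies crucially on $\bar M < M$ to keep the argument of $\theta$ away from the singular point $M$, so that $\theta(\rho_t)/\theta(\rho_t/(1-2\eps)) \to 1$ uniformly.
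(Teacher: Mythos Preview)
Your construction reverses the order of the two main operations relative to the paper --- you compactify first via $\cC_\eps$ and then mollify, whereas the paper mollifies first and then compactifies --- and this reversal creates a genuine gap. Lemma~\ref{lemma:rescaling_CE}, which you invoke to conclude that $(\bar\mu_t,\bar\nu_t)=(\cC_\eps\mu_t,\cC_\eps(\nu_t|\mu_t))\in\CE$, explicitly requires the input curve to have $C^1$ space-time densities with $\rho_t>0$ everywhere. Its proof relies on the implicit function theorem to differentiate $t\mapsto X_{\mu_t}(\eps)$ and on the ODE $\rho_t(c_t)\dot c_t=j_t(c_t)$ for the moving endpoints; none of this is available for the raw finite-energy curve $(\mu_t,\nu_t)$, whose densities are merely $L^\infty$. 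The paper's order is chosen precisely so that the hypotheses of Lemma~\ref{lemma:rescaling_CE} are met: after convolution with a strictly positive, full-support kernel $h_\tau$, the densities $\hat\rho_t$ are $C^\infty$ and strictly positive on all of $\R$, and only then is $\cC_\eps$ applied.

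A second, related point: the paper uses a full-support mollifier so that after convolution the density is automatically bounded below by some $l_\tau>0$ on any compact set, and then compactification both truncates the support and preserves this lower bound. You instead use a compactly supported mollifier (so the density can vanish at the boundary of its support) and then repair this with the $\beta$-mixture with a uniform measure on $K$. This last step is a legitimate alternative mechanism for enforcing $\tilde\rho_t\geq l>0$, but it does not fix the first gap: you still need $(\bar\mu_t,\bar\nu_t)\in\CE$ before you can mollify, and that is exactly what you cannot justify with the tools provided. If you want to keep your order of operations, you would have to prove a non-smooth version of Lemma~\ref{lemma:rescaling_CE}, which is not in the paper and is not obviously true.
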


\begin{proof}
	We split the proof into several steps. 
	
	\medskip
	\noindent
	\textit{Step 1: Space-time regularisation}. \ 
	We extend $\mu_t,\nu_t$ as $\mu_t = \mu_0$ for $t <0$, $\mu_t = \mu_1$ for $t >1$, and $\nu_t = 0$ for $t \in (-\infty,0) \cup (1,+\infty)$. Given two smooth functions $f,g:\R \to \R_+$ such that
	\begin{align}
	\label{eq:prop_f_g}
		\int f(x) \de x = 1
	\, ,\quad 
		\int|x|^p f(x) \de x < \infty 
	\, , \quad 
		f>0 \ \text{on} \ \R  \, , 
	\end{align}
	and the same for $g$. We define for $\tau >0$ the rescaled kernel on $\R \times \R$ by setting $h_\tau(t,x):= \tau^{-2} f(\tau^{-1}t) g(\tau^{-1}x)$, for $(t,x) \in \R \times \R$. Note that $h_\tau$ has integral $1$ and full support, for every $\tau>0$. We then consider $(\bfmu, \bfnu)$ as in \eqref{eq:bnu} and define 
	\begin{align}	\label{eq:def_regular_rho}
		\hat \bfmu:= h_\tau * \bfmu 
		\, ,  \quad 
		\hat \bfnu:= h_\tau * \bfnu \, , 
	\end{align}
	where the convolution has to be intended as in Section~\ref{subsec:auxiliary}\eqref{i:convolution}. In particular,
	\begin{align}	\label{eq:energy_decrease_tau}
		\int_0^1  \Phi_{p,m}(\hat \mu_t,\hat \nu_t) \de t \leq \int_0^1 \Phi_{p,m}(\mu_t,\nu_t) \de t \, .
	\end{align}
	Moreover, $\hat \bfmu$, $\hat \bfnu$ have densities $\hat \rho_\cdot(\cdot), \hat j_\cdot(\cdot) \in C^\infty([0,1]\times \setR)$. Thanks to the positivity of $h_\tau$ and $\int h_\tau = 1$, we have that 
$
		\hat \rho_t(x)\in (0,\bar M)
$,
	for every $x \in \setR$, $t \in [0,1]$. 
	
	\medskip
	\noindent
	\textit{Step 2: Reduction to compact supports}. \
	Pick $\bar \eps = \eps(\bar M) \in (0,1)$ so that 
	\begin{align}	\label{eq:M_tau<M}
		\tilde M:= (1-2\bar \eps)^{-1} \bar M < M \, .
	\end{align}
	
	For $\eps \in (0,\bar \eps)$ to be suitably chosen and $t \in [0,1]$, we define 
	\begin{gather*}
		\tilde \mu_t:= \cC_\eps \hat \mu_t 
		\, , \quad
		\tilde \nu_t:=  \cC_\eps(\hat \nu_t | \hat \mu_t) \, ,
	\end{gather*}
	where $\cC_\eps$ is defined in Definition \ref{def:rescaling}. We denote by $\tilde \rho_t$, $\tilde j_t$ the corresponding densities.
	\medskip
	\noindent
	\textit{Step 3: Energy estimates}. \
	Thanks to Lemma \ref{lemma:rescaling_CE} and the fact that $(\hat \mu_t,\hat \nu_t) \in \CE$, we know that $(\tilde \mu_t, \tilde \nu_t) \in \CE$.
	By the convexity and homogeneity of $\phi_{p,m}$, we have that
	\begin{align*}
		\phi_{p,m}(\tilde \rho_t, \tilde j_t) = \frac{1}{(1-2\eps)^p} \phi_{p,m} 
		\Big(
		\frac{1}{1-2\eps} \hat \rho_t, \hat j_t
		\Big) 
		\leq 
		\frac{1+\eps \bar C}{(1-2\eps)^p}		\phi_{p,m}(\hat \rho_t, \hat j_t) \, , 
	\end{align*}
	for some $\bar C= \bar C(\bar M,\phi_{p,m})<\infty$ depending only on $\bar M$ and $\phi_{p,m}$, where at last we used Lemma \ref{lemma:rescaling}, thanks to \eqref{eq:M_tau<M}. In particular, by  \eqref{eq:assumption_approximation_prop}, \eqref{eq:energy_decrease_tau}, and the latter bound,
	\begin{align}	\label{eq:proof_energy_esti_tilde}
		\int_0^1 \Phi_{p,m}(\tilde \mu_t, \tilde \nu_t) \de t 
		\leq
		\frac{1+\eps \bar C}{(1-2\eps)^p}	\int_0^1 \Phi_{p,m}(\mu_t, \nu_t) \de t  
			\leq \int_0^1 \Phi_{p,m}(\mu_t, \nu_t) \de t + \bar C E \eps 
		\, .
	\end{align}
	
	The upper bound \eqref{i:prop:approximation_1} then follows directly by choosing $\eps>0$ small enough. 
	
	\medskip
	\noindent
	\textit{Step 4: Regularity of the approximation}. \
	By construction and \eqref{eq:M_tau<M}, we have that
	\begin{align}	\label{eq:proof_regularity_tilde}
		\begin{gathered}
			\big\{ t \mapsto \tilde \rho_t, \, \tilde j_t \in C^\infty(\supp(\tilde \mu_t)) \big\} \in C^\infty([0,1])
			\, , 
		\quad  
				\supp \tilde \mu_t = I_\eps[\hat \mu_t]  \subset \setR 
			\, , \\
			0 < l_\tau \leq \tilde \rho_s(x) \leq \tilde M <M \, , \quad \forall x \in \supp(\tilde \mu_s) \, ,
		\end{gathered}
	\end{align}
	for every $s \in [0,1]$, for some $l_\tau>0$ depending only on $\tau$, $f$, and $(\mu_t)_{t \in[0,1]}$ (but not on $\eps$, since $(1-2\eps)^{-1} >1$). Recall the definition of $I_\eps[\cdot]$  as given in \eqref{eq:def_rescaling}.
	In particular, \eqref{i:prop:approximation_2} follows with a compact set $K=K_{\tau,\eps} \subset \setR$ only depending on $\eps$, $\tau$, and $(\mu_t)_{t \in[0,1]}$. 
	
	\medskip
	\noindent
	\textit{Step 5: Estimate of the distance.} \
	We are left with the proof of \eqref{i:prop:approximation_3}. By triangle inequality, we have that
		$
		\bW_q(\mu_t, \tilde \mu_t) 
		\leq 
			\bW_q(\mu_t, \hat \mu_t) +
				\bW_q(\hat \mu_t, \tilde \mu_t) \, .
	$
	Set $f_\tau := \tau^{-1} f(\tau^{-1}\cdot)$. By Remark \ref{rem:time_reg_finite_energy} with $q \leq p$ and \eqref{eq:prop_f_g}, for every $s,t \in [0,1]$ we have that 
	\begin{align*}
		\bW_q(\mu_t, f_\tau * \mu_s) 
			\leq
				 \bW_q(\mu_t, \mu_s) +
				 	\bW_q(\mu_s, f_\tau * \mu_s)
			\leq
				E'|t-s| +  \tau (\bE_f |x|^q)^{\frac1q} \, , 
	\end{align*}
for some $E'=E'(E,\bfmu, \theta)$, where the second inequality follows using the admissible transport plan $\de \pi(x,y):= f_\tau(y-x) \de y \de \mu_s(x)$. 
	Using the convexity of $\bW_q$ and the latter bound, by \eqref{eq:prop_f_g} 
	\begin{align*}
		\bW_q(\mu_t, \hat \mu_t)
			\leq
				\int g_\tau(t-s) \bW_q(\mu_t, f_\tau * \mu_s) \de s 
			\leq
				C \tau  \, ,
	\end{align*}
for some $C\in\R_+$ depending on $f$, $g$, and $E'$. 
By Lemma \ref{lemma:rescaling_error}, we get that
\begin{align*}
			\diam(I_\eps[\hat \mu_t])^{p-q}	\bW_q(\hat \mu_t, \cC_\eps \hat \mu_t)^q
		\leq 
			\int_{I_\eps[\hat \mu_t]^c} |x - x_{\hat \mu_t}|^p \de \hat \mu_t(x) \, .
\end{align*}
Collecting the various estimates, we end up with
\begin{align*}
	\diam(I_\eps[\hat \mu_t])^{p-q}\bW_q(\mu_t, \tilde \mu_t)^q
		\leq
		\bigg(
			\diam(I_\eps[\hat \mu_t])^{\frac{p}{q}-1} C \tau  + \left( \int_{I_\eps[\hat \mu_t]^c} |x - x_{\hat \mu_t}|^p \de \hat \mu_t(x) \right)^{\frac1q}
	\bigg)^q \, .
\end{align*}

Employing Lemma \ref{lemma:stability_Ieps}, together with Remark \ref{rem:time_reg_finite_energy}, and taking the limit as $\tau \to 0$,  for every fixed $\eps>0$ and $t \in [0,1]$ we find that
\begin{align*}
	\limsup_{\tau \to 0} 
			\diam(I_\eps[\hat \mu_t])^{p-q}\bW_q(\mu_t, \tilde \mu_t)^q
			&\leq 
			C' \left( 
			\int_{I_\eps[\mu_t]^c} |x|^p \de\mu_t(x)  + \eps 
			\right)
	 	\\
	 		&= 
	C'
	 			\left(
	 			\int_0^\eps 
	 				\Big|
	 					X_{ \mu_t}(z)
	 				\Big|^p
	 			\de z
	 		+
	 			\int_{1-\eps}^1 
	 				\Big|
	 					X_{ \mu_t}(z)
	 				\Big|^p
	 			\de z
	 			 + \eps
	 			 \right) \, , 
\end{align*}
for some $C'\in\R_+$ independent of $\eps$ and $t$. The latter term is infinitesimal as $\eps \to 0$ uniformly in time, thanks to Remark~\ref{rem:time_reg_finite_energy} and \eqref{eq:Wass_pseudo}. Therefore, we reach any threshold $\eta>0$ if we suitably choose first $\tau$  and then $\eps$ small enough.

	\medskip
\noindent
\textit{Step 5: Estimate of the tail.} \ 
We are left to prove we can choose $\tau,\eps>0$ such that
\begin{align*}
	\sup_{t\in[0,1]}
		\int_{\supp(\tilde \mu_t)^c} |x|^p \de \mu_t(x)			
		\leq \eta \, ,
\end{align*}
for any given threshold $\eta>0$. We claim that $I_{2\eps}[\mu_t] \subset I_\eps[\hat \mu_t]$, for $\tau$ sufficiently small. This would allow us to conclude, as in the proof of Step 4, that
\begin{align*}
		\sup_{t\in[0,1]} 
			\int_{I_\eps[\hat \mu_t]^c} |x|^p \de \mu_t(x)	
		\leq 
			\sup_{t\in[0,1]}
				 \int_{I_{2\eps}[\mu_t]^c} |x|^p \de \mu_t(x) \to 0	
\end{align*}
as $\eps \to 0$. More precisely, we shall prove that 
\begin{align}
		\label{eq:claim_delta_tau}
	I_{\eps + \delta_{\tau}}[\mu_t] \subset I_\eps[\hat \mu_t], 
		\quad 
	\forall t \in [0,1] ,
\end{align}
where $\delta_\tau$ can be explicitly chosen depending only on $\tau$, $p$, $M$, and the curve $\bfmu$  as
\begin{align*}
	\delta_\tau := 
	\Big(
	\sup_{t\in[0,1]}
		\bW_p(\mu_t,\hat \mu_t)^{\frac{p}{p+1}}
	\Big)
	\Big(
		(p+1)M^p
	\Big)^{\frac1{p+1}}.
\end{align*}
Using the fact that $\delta_\tau \to 0$ as $\tau \to 0$, the claim is proved.
To prove \eqref{eq:claim_delta_tau}, suppose that $\delta>0$ is such that
$
	X_{\mu_t}(\eps + \delta) < X_{\hat \mu_t}(\eps)
$, for some $t \in [0,1]$. Using the fact that $\rho_t\leq M$, we obtain
\begin{align*}
	X_{\mu_t}(\eps+\delta) > X_{ \mu_t}(\eps+z) + M^{-1}(\delta - z),
		\quad
	\forall z \in [0,\delta] .
\end{align*}
Hence, by assumption on $\delta$, for $z \in [0,\delta]$ we have that $$X_{\hat \mu_t}(\eps+z) - X_{\mu_t}(\eps+ z) > X_{ \mu_t}(\eps+\delta) - X_{\mu_t}(\eps+ z)  > M^{-1}(\delta - z) \, .$$ By Remark \ref{rem:Wass_pseudo}, we conclude that 
\begin{align*}
	\bW_p(\mu_t, \hat \mu_t)^p \geq \int_0^{\delta} 
	\Big|
		X_{\mu_t}( \eps + z) - X_{\hat \mu_t}(\eps + z)
	\Big|^p
		\de z
	> 
		\delta^{p+1} (p+1)^{-1}M^{-p},
\end{align*}
which shows that $\delta < \delta_\tau$. This shows that $X_{\mu_t}(\eps + \delta_\tau) \geq X_{\hat \mu_t}(\eps)$. In a similar way, one can prove that $X_{\mu_t}(1-\eps - \delta_\tau) \leq  X_{\hat \mu_t}(1-\eps)$, and thus proving \eqref{eq:claim_delta_tau}.
\end{proof}

\section{Proof of the main result: $\Gamma$-convergence.}
\label{sec:proof_dist}
In this section we prove Theorem \ref{thm:Gamma-conv}.
We start showing that for any sequence $\seq{x^N \in \cK^N}_N$, the weak limits of the associated empirical measures coincide with the weak limits of the corresponding piecewise-constant measures.

\begin{lemma}[Weak limits of empirical and piecewise-constant measures]
	\label{lemma:weak_limits}
	Let $p \in [1,\infty)$ and $\seq{x^N \in \cK_N}_N$. Set 
	$
	\hat{\mu}^N := \E^N(x^N)
	$, 
	$\mu^N := \cPC^N(x^N)
	$, and fix $\mu \in \cP(R)$.
	\begin{enumerate}[(i)]
	\setlength{\itemsep}{.2\baselineskip}
		\item 
		\label{i:lemma:weak_limits_1}
		$\hat \mu^N \to \mu$ vaguely if and only if $\mu^N \to \mu$ vaguely.
		\item 
		\label{i:lemma:weak_limits_2}
		The $p$th-moments of $\hat{\mu}^N$ are equi-integrable if and only if the $p$th-moments of $\mu^N$ are equi-integrable.
		\item
		\label{i:lemma:weak_limits_3}
		$\bW_p(\hat{\mu}^N, \mu) \to 0$  if and only if $\bW_p(\mu^N, \mu) \to 0$.
	\end{enumerate}
\end{lemma}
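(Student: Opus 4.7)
My approach is to establish a uniform comparison of the cumulative distribution functions $F^N(x):= \mu^N((-\infty,x])$ and $\hat F^N(x):= \hat \mu^N((-\infty,x])$. Using that $\mu^N([x_i,x_{i+1}])=1/N$ (so $F^N$ is piecewise linear with $F^N(x_i) = i/N$) and that $\hat\mu^N(\{x_i\}) = 1/(N+1)$ (so $\hat F^N$ is the right-continuous step function equal to $(i+1)/(N+1)$ on $[x_i,x_{i+1})$), a direct computation yields the uniform estimate
\[
\| \hat F^N - F^N \|_{L^\infty(\R)} \leq \frac{1}{N+1}.
\]
For (i), the integration-by-parts identity $\int \varphi\, d\nu = -\int \varphi'(x) F_\nu(x)\, dx$, valid for $\varphi \in C_c^1(\R)$ and any probability measure $\nu$ on $\R$, gives
\[
\int \varphi\, d\hat\mu^N - \int \varphi\, d\mu^N = \int \varphi'(x)\big(F^N(x) - \hat F^N(x)\big)\, dx,
\]
bounded in absolute value by $\|\varphi'\|_{L^1}/(N+1) \to 0$. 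A standard density argument extends this to every $\varphi \in C_c(\R)$, proving (i).

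For (ii), the strategy is a two-sided tail comparison of the $p$-moments. The upper bound is straightforward: since on $[x_i,x_{i+1}]$ the density of $\mu^N$ is $1/(N\Delta x_i)$, the average of $|x|^p$ over the interval is at most $\max(|x_i|,|x_{i+1}|)^p$, and counting each particle in at most two adjacent intervals yields $\int_{|x|>R} |x|^p\, d\mu^N \leq C \int_{|x|>R} |x|^p\, d\hat\mu^N$ for a universal $C$. The lower bound is the core of the argument: for an index $i$ with $M := \max(|x_i|,|x_{i+1}|) > 2R$, assume without loss of generality that $x_{i+1} = M > 0$; then $[M/2,M] \subset [x_i,x_{i+1}] \cap \{|x|>R\}$, so $\int_{M/2}^M x^p\, dx \geq c_p M^{p+1}$, and combined with the trivial bound $\Delta x_i \leq 2M$ (valid since $|x_i|, |x_{i+1}| \leq M$) such an interval contributes at least $c_p M^p/(2N)$ to $\int_{|x|>R} |x|^p\, d\mu^N$. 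Summing over indices, and noting that each particle $x_j$ with $|x_j|>2R$ is an endpoint of at least one such interval, one obtains $\int_{|x|>R} |x|^p\, d\mu^N \geq c'_p \int_{|x|>2R} |x|^p\, d\hat\mu^N$. The equivalence of equi-integrability of the $p$-th moments along $\{\mu^N\}$ and $\{\hat\mu^N\}$ follows.

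Finally, (iii) is immediate from (i), (ii), and the standard characterisation of $\bW_p$-convergence on $\R$ as narrow convergence together with uniform integrability of the $p$-th moments: since $\mu^N$, $\hat\mu^N$, $\mu$ are all probability measures, vague convergence to the common limit $\mu$ is equivalent to narrow convergence. The \textbf{main obstacle} is the lower bound in (ii), because the only a priori constraint on the particle gaps, $\Delta x_i \geq 1/(NM)$, bounds them from below but not from above; hence an interval $[x_i,x_{i+1}]$ can be arbitrarily long and, a priori, absolutely continuous mass of $\mu^N$ placed far from the endpoints could escape detection by $\hat\mu^N$. The saving observation is the elementary inequality $\Delta x_i \leq 2\max(|x_i|,|x_{i+1}|)$, which pins the scale of a long tail interval to the magnitude of its endpoints, so that the tail moments of $\mu^N$ effectively control those of $\hat\mu^N$.
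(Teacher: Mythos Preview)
Your overall strategy is sound and leads to a correct proof, but it differs from the paper's in both parts (i) and (ii). For (i), the paper introduces the auxiliary empirical measure $\bar\mu^N:=\frac1N\sum_{i=0}^{N-1}\delta_{x_i}$ and shows $\hat\mu^N-\bar\mu^N\to 0$ and $\bar\mu^N-\mu^N\to 0$ by direct testing against $\varphi\in C_c^\infty$; your route via the uniform CDF estimate $\|\hat F^N-F^N\|_{L^\infty}\le 1/(N+1)$ and integration by parts is cleaner and in fact yields a quantitative bound. For (ii), the paper argues with the first and last particle indices $i_\ell,i_r$ inside $[-L,L]$ and the monotonicity of $|x|^p$ on each half-line, whereas you set up two-sided tail inequalities $\int_{|x|>R}|x|^p\,d\mu^N\le C\int_{|x|>R}|x|^p\,d\hat\mu^N$ and $\int_{|x|>R}|x|^p\,d\mu^N\ge c_p'\int_{|x|>2R}|x|^p\,d\hat\mu^N$ via an interval-by-interval analysis. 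Your key observation $\Delta x_i\le 2\max(|x_i|,|x_{i+1}|)$ is exactly the right substitute for an upper bound on the gaps and makes the lower tail comparison work.

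There is, however, one small slip in the lower bound. With $M=\max(|x_i|,|x_{i+1}|)>2R$ and (say) $x_{i+1}=M>0$, the inclusion $[M/2,M]\subset[x_i,x_{i+1}]$ can fail: take $x_i=3$, $x_{i+1}=4$, $R=1$. The fix is a one-line case split: if $x_i\le M/2$ your argument goes through verbatim; if $x_i>M/2$ then $[x_i,x_{i+1}]\subset(R,\infty)$ entirely, so the interval contributes $\frac{1}{N\Delta x_i}\int_{x_i}^{x_{i+1}}x^p\,dx\ge \frac{x_i^p}{N}\ge \frac{(M/2)^p}{N}$. Either way the contribution is $\ge c_p M^p/N$, and your summation step (each particle with $|x_j|>2R$ is an endpoint of at least one interval with $M_i\ge|x_j|$, and $M_i^p\ge\tfrac12(|x_i|^p+|x_{i+1}|^p)$) then gives the claimed inequality. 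With this patch your proof of (ii) is complete, and (iii) follows from (i), (ii) and the standard characterisation exactly as you and the paper both note.
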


\begin{proof}

Note that \eqref{i:lemma:weak_limits_3} is a direct consequence of \eqref{i:lemma:weak_limits_1} and \eqref{i:lemma:weak_limits_2}. For the sake of simplicity, we omit the dependence on $N$ and write $x_i = x_i^N$. 

	\smallskip
\noindent 
\eqref{i:lemma:weak_limits_1}: \  It suffices to prove that $\hat{\mu}^N - \mu^N \to 0$ vaguely.
In fact, we claim that
\[ \hat{\mu}^N -   \frac{1}{N}  \sum_{i=0}^{N-1} \delta_{x_i} \to 0 \quad \mbox{ and } \quad   \frac{1}{N}  \sum_{i=0}^{N-1} \delta_{x_i} - \mu^N \to 0
\quad \text{vaguely}\, .  \]
For simplicity of notation,  we denote $\bar{\mu}^N := \frac{1}{N}  \sum_{i=0}^{N-1} \delta_{x_i}$ (note that in general $\bar{\mu}^N$ does not coincide with $\hat{\mu}^{N-1}$ because the set of particles may differ). 
For any $\vphi \in C^\infty_c(\setR)$, we denote by $\supp \vphi$ its (compact) support.  
On one hand
\begin{align*}
	\left| \int_\setR \varphi \de ( \hat{\mu}^N - \bar{\mu}^N) \right| & = \left|  \frac{1}{N+1} \sum_{i=0}^N \varphi(x_i) - \frac{1}{N} \sum_{i=0}^{N-1} \varphi(x_i) \right|  \\
	& \leq \frac{1}{N(N+1)} \sum_{i=0}^{N-1} |\varphi(x_i)| + \frac{\|\varphi(x_N)\|}{N+1} \leq \frac{2\|\varphi\|_{\infty}}{N+1} \to 0 \, .
\end{align*}
On the other side, 
\begin{align*}
	\left| \int_\setR \varphi \de  (\bar{\mu}^N - \mu^N) \right| & = \frac{1}{N} \sum_{i=0}^{N-1} \left| \varphi(x_i) - \fint_{x_i}^{x_{i+1}} \varphi(x) \de x \right|  \leq \frac{\| \vphi' \|_{\infty} }{N} \text{diam(spt}\vphi) \to 0 \, . 
\end{align*}
	\eqref{i:lemma:weak_limits_2}: \  Let us first assume that the $p$th-moments of $\hat \mu^N$ are equi-integrable, namely for every $\eps>0$ there is $L_\eps>0$ such that 
	\[ \limsup_{N \to \infty} \int_{\setR \setminus [-L_\eps, L_\eps]} |x|^p \de \hat{\mu}^N(x)  < \eps.  \]
	 We set 
	$
	i_\ell := \min \{ i : x_i \in [-L_\eps,  L_\eps]  \}$, $   i_r := \max \{ i : x_i \in [-L_\eps,  L_\eps]  \}. 
	$
	In particular
	\[\limsup_{N \to \infty} \frac{1}{N+1} \left( \sum_{i=0}^{i_\ell - 1} |x_i|^p +  \sum_{i=i_r + 1}^{N}  |x_i|^p \right) < \eps, \]
	from which, by monotonicity of the function $|\cdot|^p$ separately in $(\infty,-L_\eps)$ and $(L_\eps, \infty)$,  for $N$ big enough we deduce 
	\begin{align*}
	\int_{\setR \setminus [-L_\eps, L_\eps]} |x|^p \de \mu^N & \leq \frac{1}{N} \left(  \sum_{i=0}^{i_\ell - 1}  \fint_{x_i}^{x_{i+1}}  |x|^p \de x + \sum_{i=i_r}^{N-1} \fint_{x_i}^{x_{i+1}}  |x|^p \de x \right) \\
	& \leq  \frac{1}{N} \left( \sum_{i=0}^{i_\ell - 1} |x_i|^p +  \sum_{i=i_r}^{N-1} |x_{i+1}|^p  \right)
	\leq 2\eps \, ,
	\end{align*}
	thus providing the equi-integrability of the $p$th-moments of $\mu^N$.
	
	Viceversa: assume that for $\eps >0$, there exists $L_\eps >0$ such that 
	\[ \limsup_{N \to \infty} \int_{\mathbb{R} \setminus [-L_\eps, L_\eps]} |x|^p \de \mu^N < \eps.  \]
	Observe that,  calling again $i_\ell, i_r$ the first and the last particle inside $[-L_\eps,L_\eps]$ respectively,  for $N$ big enough we get
	\begin{align*}
	\int_{\setR \setminus [x_{i_\ell}, x_{i_r}]} |x|^p \de \mu^N  &= \int_{\setR \setminus [-L_\eps, L_\eps]} |x|^p \de \mu^N + \int_{[-L_\eps, L_\eps] \setminus [x_{i_\ell}, x_{i_r}]} |x|^p \de \mu^N \\
	& \leq \eps + \frac{1}{N} \left( \frac{1}{\Delta x_{i_\ell-1} } \int_{-L_\eps}^{x_{i_\ell}} |x|^p \de x      \frac{1}{\Delta x_{i_r}} \int^{L_\eps}_{x_{i_r}} |x|^p \de x     \right) 
	\leq \eps + \frac{2|L_\eps|^p}{N} \leq 2\eps \, .
	\end{align*}
	Therefore
	\begin{equation}\label{eq:equint p-moments PC}
	\limsup_N \frac{1}{N} \left(  \sum_{i=0}^{i_\ell - 1} \fint_{x_i}^{x_{i+1}} (-x)^p \de x + \sum_{i=i_r}^{N - 1}   \fint_{x_i}^{x_{i+1}} x^p \de x  \right) < 2\eps.
	\end{equation}
	Since $|x_{i_\ell -1}| \leq |x_{i_\ell -2}|$ and $a^{p+1}-b^{p+1} \geq (a-b) a^p$ for $0<b<a$, we have that
	\begin{align*}
		\int_{\setR \setminus [-L_\eps, L_\eps]} |x|^p \de  \hat{\mu}^N
			&\leq  
		2\left(     \sum_{i=0}^{i_\ell - 2} |x_i|^p  +  \sum_{i=i_r}^{N - 1} |x_{i+1}|^p  \right)
	\\
			&\leq 2(p+1)
			\left(
		\sum_{i=0}^{i_\ell - 1} \fint_{x_i}^{x_{i+1}} (-x)^p \de x +  \sum_{i=i_r}^{N - 1}   \fint_{x_i}^{x_{i+1}} x^p \de x
		\right) \, .
	\end{align*}
	Hence, by \eqref{eq:equint p-moments PC} we conclude the equi-integrability of the $p$-moments of $\hat{\mu}^N$.
\end{proof}

\subsection{Proof of the $\Gamma$-limsup inequality}

In the first result, we show the $\Gamma$-$\limsup$ inequality for regular curves, in the sense of Definition~\ref{def:regular_measures}, and we conclude using the approximation result provided by Proposition \ref{prop:approximation}.  
We emphasise that, for regular curves, we are able to construct a recovery sequence satisfying a discrete version of Definition~\ref{def:regular_measures}. In principle, this is not necessary to prove Theorem~\ref{thm:Gamma-conv}, nonetheless it plays a crucial role in Section~\ref{sec:JKO} for the convergence of the JKO-schemes. Throughout the whole section, we fix $\phi_{p,m}$ as in Definition~\ref{def:action_density_functions}, with $1 < p < \infty$. Moreover, we denote by $\bW_\infty$ the distance given by $\bW_\infty(\mu_0, \mu_1) := \| X_{\mu_0} - X_{\mu_1} \|_\infty$, for $\mu_0,\mu_1 \in \cP(\R)$ compactly supported.

\begin{proposition}[Existence of (regular) recovery sequences for regular curves]
	\label{prop:upper_regular}
	 Let $(\tilde \bfmu,\tilde \bfnu) \in \CE$ be regular curves in the sense of Definition \ref{def:regular_measures} for some compact set $K$ and constants $0 < l \leq \tilde M < M$. 
	 Then there exist discrete curves $\seq{\tilde x^N \in C^\infty ([0,1] ;  \cK_{N} )}_N$ such that
	\begin{enumerate}[(i)]
		\setlength{\itemsep}{.2\baselineskip}
		\item \label{i:prop:upper_regular_1} 
		the measures
		$\displaystyle 
		\cPC^N
		(	\tilde x^N(k)    )$ (and hence $\E^N( 	\tilde x^N(k)    )$ as well, by Lemma \ref{lemma:weak_limits}) converge to
	    $\tilde \mu_k$ in $\bW_q
		$ as $N \to \infty$, for $k=0,1$, for every  $q \in (1,+\infty]$.
		\item 
		\label{i:prop:upper_regular_2} 
		$\displaystyle
		\limsup_{N \to \infty}
		\int_0^1 \Phi_{p,m}^N
		\big(  
		\tilde x^N (t) , \dot{\tilde x}^N(t) 
		\big) \de t
		\leq
		\bpW(\tilde \mu_0,\tilde \mu_1)^p$ .
		\item 
		\label{i:prop:upper_regular_3} 
		The following regularity properties holds: for $i=0, \dots, N$, $t \in [0,1]$, $N \in \setN$
		\begin{align*}
			l \leq \tilde R_i^N(\tilde x^N(t)) \leq \tilde M  , \quad 
			 \supp  \cPC^N
		\big( 	\tilde x^N(t) \big) = \supp(\tilde \mu_t) \subset K \, .
		\end{align*}
	\end{enumerate}
\end{proposition}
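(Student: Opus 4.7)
The plan is to define the recovery sequence as the sampling of the Lagrangian quantile map at equispaced levels, and to transfer the regularity of $(\tilde\mu_t,\tilde\nu_t)$ to the discrete side.

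\textbf{Step 1 (Lagrangian setup).} Since $\rho_t$ is $C^\infty$ and bounded below by $l>0$ on its (compact, connected) support, the Eulerian velocity $v_t:=j_t/\rho_t$ is smooth and extends continuously to $\overline{\supp(\tilde\mu_t)}$. Applying the implicit function theorem to the cumulative distribution $F_t(x):=\int_{-\infty}^x \rho_t$, whose $x$-derivative is $\ge l$, I obtain that $X_t:=X_{\tilde\mu_t}$ is smooth on $[0,1]\times(0,1)$, with $\partial_s X_t(s)=1/\rho_t(X_t(s))$ and, exactly as in the proof of Lemma~\ref{lemma:rescaling_CE}, $\partial_t X_t(s)=v_t(X_t(s))$. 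By connectedness of $\supp(\tilde\mu_t)$, $X_t$ extends continuously to $s\in[0,1]$, with $X_t(0),X_t(1)$ the endpoints of $\supp(\tilde\mu_t)\subset K$, and the same transport ODE still holds at the boundary.

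\textbf{Step 2 (Recovery sequence and (i), (iii)).} I set
$$
\tilde x_i^N(t):=X_t(i/N),\qquad i=0,\dots,N,\; t\in[0,1],
$$
so that $\tilde x^N\in C^\infty([0,1];\cK_N)$. Using $\partial_s X_t=1/\rho_t(X_t)$ and $l\le\rho_t\le\tilde M$,
$$
\tilde x_{i+1}^N(t)-\tilde x_i^N(t)=\int_{i/N}^{(i+1)/N}\frac{ds}{\rho_t(X_t(s))}\in\Big[\tfrac{1}{N\tilde M},\tfrac{1}{Nl}\Big]\subset\Big(\tfrac{1}{NM},\infty\Big),
$$
so the cone constraint is strict and $\tilde R_i^N(\tilde x^N(t))$ is the harmonic mean of $\rho_t\circ X_t$ on $[i/N,(i+1)/N]$, which yields $l\le\tilde R_i^N\le\tilde M$ for $i\le N-1$; the last reconstructed density $R_N^N=\rho_N^*$ is controlled by the standing assumption $\sup_N\|\rho_N^*\|_\infty<M$. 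The supports in (iii) match by construction. For (i), the quantile function of $\cPC^N(\tilde x^N(t))$ is the piecewise-linear interpolant of $\{(i/N,X_t(i/N))\}_i$, so uniform continuity of $X_t$ on $[0,1]$ gives $\bW_\infty$-, hence $\bW_q$-, convergence to $\tilde\mu_t$; Lemma~\ref{lemma:weak_limits} transfers the conclusion to $\E^N(\tilde x^N(t))$.

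\textbf{Step 3 (Energy and main obstacle).} Differentiating the definition gives $\dot{\tilde x}_i^N(t)=v_t(X_t(i/N))$, whence
$$
\Phi_{p,m}^N(\tilde x^N(t),\dot{\tilde x}^N(t))=\frac{1}{N+1}\sum_{i=0}^{N-1}\frac{|v_t(X_t(i/N))|^p}{\theta(\tilde R_i^N(t))^{p-1}}+\frac{1}{N+1}\frac{|v_t(X_t(1))|^p}{\theta(\rho_N^*(\tilde x^N(t)))^{p-1}}.
$$
The boundary term is $O(1/N)$ since $v_t\in L^\infty(K)$ and $\theta(\rho_N^*)\ge\theta(\sup_N\|\rho_N^*\|_\infty)>0$. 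Smoothness of $v_t,\rho_t$ and the uniform convergence $\tilde R_i^N\to\rho_t\circ X_t$ on each cell identify the remaining sum, pointwise in $t$, as a Riemann sum for
$$
\int_0^1\frac{|v_t(X_t(s))|^p}{\theta(\rho_t(X_t(s)))^{p-1}}\,ds=\int_\R\phi_{p,m}(\rho_t,j_t)\,dx=\Phi_{p,m}(\tilde\mu_t,\tilde\nu_t),
$$
via the change of variables $x=X_t(s)$ (with Jacobian $1/\rho_t$) and the identity $\phi_{p,m}(\rho,v\rho)=|v|^p\rho/\theta(\rho)^{p-1}$. Integrating in $t$ by dominated convergence then yields (ii). The main obstacle I anticipate is producing a $t,N$-uniform integrable majorant for the discrete action, namely $\|v\|_{L^\infty([0,1]\times K)}^p / \min\{\theta(\tilde M),\theta(\sup_N\|\rho_N^*\|_\infty)\}^{p-1}$: this is exactly where the assumption $\sup_N\|\rho_N^*\|_\infty<M$ is essential, as it keeps $\theta(\rho_N^*)^{1-p}$ bounded and away from the singularity of $\phi_{p,m}$ at the maximal density $\rho=M$.
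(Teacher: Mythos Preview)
Your construction of the recovery sequence via $\tilde x_i^N(t)=X_{\tilde\mu_t}(i/N)$ and your proofs of (i) and (iii) are essentially identical to the paper's. The only genuine difference is in the energy estimate (ii).

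The paper does not argue via Riemann sums and dominated convergence. Instead, it observes that the measures $d\xi_t^{N,i}:=N\tilde R_i^N\,(\tilde\rho_t\circ\tilde X_t)^{-1}(z)\,dz$ are probabilities on $[i/N,(i+1)/N]$ and applies Jensen's inequality to the convex function $\phi_{p,m}$, obtaining directly
\[
\int_\R \phi_{p,m}(\tilde\rho_t,\tilde j_t)\,dx
\;\ge\;
\frac{1}{N}\sum_{i=0}^{N-1}(\tilde R_i^N)^{p-1}\,\phi_{p,m}\!\Big(\tilde R_i^N,\ \fint_{i/N}^{(i+1)/N}\partial_t\tilde X_t(z)\,dz\Big).
\]
It then uses the Lipschitz continuity of $\phi_{p,m}$ on the compact set $[l,\tilde M]\times[0,\|\tilde v\|_\infty]$ to replace the average $\fint\partial_t\tilde X_t$ by $\dot{\tilde x}_i^N$ and to absorb the boundary term, yielding an explicit error of order $C/N$. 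Your route is more elementary and in fact gives the full limit (not just the $\limsup$); the paper's Jensen argument buys a clean one-sided inequality from the outset together with a quantitative rate.

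One side remark that applies to both arguments: what is actually established is
\[
\limsup_{N\to\infty}\int_0^1 \Phi_{p,m}^N(\tilde x^N,\dot{\tilde x}^N)\,dt
\;\le\;
\int_0^1 \Phi_{p,m}(\tilde\mu_t,\tilde\nu_t)\,dt,
\]
which is in general larger than $\bpW(\tilde\mu_0,\tilde\mu_1)^p$ unless $(\tilde\mu_t,\tilde\nu_t)$ is a geodesic. This stronger form is precisely what the paper uses in the subsequent applications (e.g.\ in \eqref{eq:proof_JKO_controls}), so your conclusion matches what is needed.
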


\begin{proof}
	The main idea is to consider the pseudo-inverse associated with the regular curve $\tilde \bfmu$ and discretise them in space on a scale $1/N$. 
	
	\medskip
	\noindent
	\textit{Step 1: Discretisation of $(\tilde \mu_t, \tilde \nu_t)$}. \
	Denote by $\tilde v_t = \tilde j_t/\tilde \rho_t$ the associated velocity field, and by $I_t$ its compact and connected support. By construction, we have that $\big\{ t \mapsto \tilde v_t \in C^\infty(I_t) \big\} \in C^\infty([0,1]) $. In particular, the curve $\tilde X_t := X_{\tilde \mu_t}$ solves the differential equation (see \eqref{eq:equation_pseudoinverse_time})
	\begin{align*}
		\dot{\tilde X}_t(z) = \tilde v_t(\tilde X_t(z)) \, , \quad  \tilde X_0 := X_{\tilde \mu_0}, \quad \tilde X_0 \in C^1([0,1]) \, ,
	\end{align*}
	where the latter regularity follows from the continuity of (the density of) $\mu_0$ and the fact that $I_t$ is compact and connected.
	Moreover, by the homogeneity of $\phi_{p,m}$, we have that for $t \in [0,1]$
	\begin{align}	
 \label{eq:equality_optimum}
			\int_0^1 
			( \tilde \rho_t \circ  \tilde X_t)^{-1} 	
			\phi_{p,m}( \tilde \rho_t \circ  \tilde X_t,  \tilde j_t \circ  \tilde X_t) 
			\de z 
			&= 
			\int_\setR 
			\phi_{p,m}( \tilde \rho_t, \tilde j_t) 
			\de x 
	\end{align}	
	We define for every $N\in \setN$ the discrete curve $ \tilde x^N(t)$ as  
	\begin{align*}
		\tilde x_i^N(t) :=  \tilde X_t \left( \frac{i}{N} \right), \quad t \in [0,1], \; i \in \{ 0, ..., N \} \, ,
	\end{align*}
	and consider the corresponding measures $ \tilde  \mu_t^N=\cPC^N(\tilde x^N(t)) \in \cP(\R)$ with densities
	\begin{align}	\label{eq:defrecovery}
		\tilde \rho_t^N(x) =  \sum_{i=0}^{N-1} \tilde R_i^N \mathbbm{1}_{\left[   \tilde x_i^N,  \tilde x_{i+1}^N \right)}(x)
		\, , \quad 
		\tilde R_j^N:= R_j^N(\tilde x^N) \, , 
	\end{align}
	for $x \in \setR$ and $j = 0, \dots , N-1$.
	Note that the pseudo-inverse of $ \tilde  \mu_t^N$ is nothing but the piecewise affine interpolation of the $ \tilde X_t$ on $[0,1]$. 
	Thus for $i =0, \dots, N-1$
	\[
	\tilde X_{ \tilde  \mu_t^N}(z) = (Nz-i) \tilde X_t \left( \frac{i+1}{N} \right) + (i+1-Nz)  \tilde X_t \left( \frac{i}{N} \right)
		\qquad 
	\text{for } z \in \left[ \frac{i}{N}, \frac{i+1}{N} \right].
	\]
	
	From the continuity of $ \tilde X_t$, it follows that
	$
		\sup_{t \in [0,1]}
		\big\|  \tilde X_t -  \tilde X_{ \tilde  \mu_t^N} \big\|_{\tL^\infty(0,1)} \to 0
	$
	as $N \to \infty
$,
	which shows \eqref{i:prop:upper_regular_1} (see also Remark \ref{rem:Wass_pseudo}).
	By construction, for any $t \in [0,1]$ the $\supp \tilde  \mu_t^N = \supp  \tilde  \mu_t$ for every $N \in \N$,  in particular $\supp \tilde  \mu_t^N \subset K$ and also $\tilde x_i^N(t) \in K$ for every $i = 0, \dots , N$.  Also,  being $ l \leq \tilde \rho_t \leq \tilde M$,  it is immediate to see that $l \leq \tilde R_i^N \leq  \tilde M$,  thus proving \eqref{i:prop:upper_regular_3}.

	\medskip
	\noindent
	\textit{Step 2: Energy estimates}. \
	Using that $( \tilde X_t)_{\#} \de z =  \tilde  \mu_t$, we deduce that
	\begin{align*}
		\fint_{\frac iN}^{\frac{i+1}{N}}
		\tilde R_i^N (\tilde \rho_t \circ  \tilde X_t)^{-1}(z)  \de z
		= 
		N \tilde R_i^N \int_{ \tilde X_t( \frac iN ) }^{ \tilde X_t ( \frac {i+1}{N} ) } \tilde \rho_t^{-1}(x) \de  \tilde  \mu_t(x)  = 1 \, , 
	\end{align*}
	by the very definition of $\tilde R_i^N$. This means that the measures $\de \xi_t^{N,i} := N \tilde R_i^N (\tilde \rho_t \circ  \tilde X_t)^{-1}(z) \de z \in \cP \big( \big[ \frac iN, \frac{i+1}{N} \big] \big)$.  
 By Jensen's inequality and \eqref{eq:equality_optimum} 
	\begin{align} \label{eq:lowerbound_Jensen}
		\nonumber
		\int_\setR \phi_{p,m}(\tilde \rho_t, \tilde j_t) \de x 
		&\geq \frac{1}{N} \sum_{i=0}^{N-1} 
		( \tilde R_i^N )^{-1} \phi_{p,m} 
		\bigg( 
		\int_{\frac iN}^{\frac{i+1}{N}} 
		( \tilde \rho_t \circ  \tilde X_t,  \tilde j_t \circ  \tilde X_t) \de \xi_t^{N,i}(z)
		\bigg) \\ \nonumber
		&= \frac{1}{N} \sum_{i=0}^{N-1} 
		( \tilde R_i^N )^{-1} \phi_{p,m} 
		\bigg( 
		\tilde R_i^N , \tilde R_i^N  \fint_{\frac iN}^{\frac{i+1}{N}} v_t \circ  \tilde X_t (z) \de z 
		\bigg) \\ 
		&= \frac{1}{N} \sum_{i=0}^{N-1} 
		( \tilde R_i^N )^{p-1} \phi_{p,m} 
		\bigg( 
		\tilde R_i^N, \fint_{\frac iN}^{\frac{i+1}{N}} \partial_t \tilde X_t (z) \de z 
		\bigg) \, ,
	\end{align}
	where at last we used the $p$-homogeneity of $\phi_{p,m}$ with respect to the second variable. Moreover, by $\tilde X \in C^1([0,1] \times [0,1])$,  $\text{Im}( \tilde X_t) \subset \supp \tilde \rho_t $, and the boundedness of $\tilde v$,
	\begin{align*}
		\left\{
			(\tilde R_i^N \big( \tilde x_t^N \big), \dot{\tilde X}_t(z)) \suchthat 
				t>0 \, , \; z \in [0,1]\, , \; N \in \setN 
		\right\}
			\subset
		[l,\tilde M] \times [0, \|  \tilde v \|_\infty] \, =K
	\end{align*}
		It is clear that $\phi_{p,m}$ is Lipschitz in $K$, therefore,  using \eqref{eq:def_PhiN}, \eqref{eq:lowerbound_Jensen} and \eqref{eq:ass_rho*}, we get that there exists a constant $C \in \R_+$ which only depends on $l$, $\tilde M$, $p$, $\theta$, and $\rho^*$ such that
	\begin{align*}
		\int_0^1 \Phi_{p,m}( \tilde \rho_t, \tilde j_t) \de t  
		\geq 
		\int_0^1 
		\Phi_{p,m}^N( \tilde x^N, \dot{ \tilde x}^N) \de t
		- 
		\frac CN \Big(  \|\tilde X \|_{C^2}
		+  
		\| \tilde X \|_{C^1}^p  \Big) \, .
%
	\end{align*} 
	Letting $N \to +\infty$ we obtain \eqref{i:prop:upper_regular_2}. 
\end{proof}

We are finally ready to prove the $\Gamma$-$\limsup$ inequality in Theorem \ref{thm:Gamma-conv}.

\begin{proof}[Proof of Theorem \ref{thm:Gamma-conv}, $\Gamma$-$\limsup$]
Fix $\mu_0,\mu_1 \in \Prob_q(\setR)$ be such that $\bpW(\mu_0,\mu_1)<+\infty$. Let $(\bfmu,\bfnu) \in \CE(\mu_0,\mu_1)$ be a $\bpW$-geodesic between $\mu_0$ and $\mu_1$ (see \eqref{i:geodesics_convexity}).
For every $\eta>0$, we apply Proposition \ref{prop:approximation} and Proposition \ref{prop:upper_regular}, and find regular curves $\tilde \mu_0^\eta$, $\tilde \mu_1^\eta$ (thus belonging to $\Prob_q(\setR)$)  and discrete curves $\tilde x^{N,\eta} \in W^{1,p}(0,1 ; \cK_N)$ such that $	\bW_q(\mu_k, \tilde \mu_k^\eta)^q \leq \eta$, $ 
\E^N(\tilde x_k^{N,\eta}) \xrightarrow{\bW_q} \tilde \mu_k^\eta$, and 
\begin{gather*}
	\limsup_{N \to \infty}
		\int_0^1 \Phi_{p,m}^N
		\big(  
			\tilde x^{N,\eta} (t) , \dot{\tilde x}^{N,\eta}(t) 
		\big) \de t
	\leq
	\bpW(\tilde \mu_0^\eta,\tilde \mu_1^\eta)^p
	\leq
		\bpW(\mu_0,\mu_1)^p + \eta \, .	
\end{gather*} 
We conclude the proof by choosing a suitable diagonal sequence $\tilde x^N:= \tilde x^{N,\eta(N)}$.
\end{proof}

\subsection{Proof of the $\Gamma$-liminf inequality}
In this section we prove the $\Gamma$-$\liminf$ in Theorem \ref{thm:Gamma-conv}. A crucial tool is a compactness result for "almost" solutions to the continuity equation.
This is the content of the next Proposition, which is inspired by \cite[Lemma 4.5]{dolbeault2012} (see also \cite[Proposition 2.5]{lisiniMarigonda2010}) and follows similar ideas. For the sake of clarity, we provide a proof.
\begin{definition}[Asymptotic solutions to $\bCE$]
	We say that a sequence $\seq{\mu_t^N,\nu_t^N}_N$ is  \textit{asymptotically a solution to $\bCE$} on $[0,1]$ if the same assumptions of Definition \ref{def:CE} are satisfied for every $N \in \setN$, but with $3.$ replaced by
	\begin{itemize}
		\item[$3'$.] For every $\vphi \in C_c^1(\setR)$, the measurable maps $\cR^N \vphi:= t \mapsto (\partial_t \mu_t^N + \partial_x \nu_t^N, \vphi)$ converge to $0$ weakly in $\tL^1(0,1)$ as $N \to \infty$.
	\end{itemize}
We use the notation $\cR_t^N \vphi:= (\cR^N \vphi)(t)$, for $t \in (0,1)$.
\end{definition}
\begin{proposition}[Compactness]
		\label{prop:compactness_asymptCE}
	Let $\seq{\mu_t^N, \nu_t^N}_N$ be asymptotically a solution to $\bCE$ satisfying
	\begin{align*}
		A:=\sup_{N \in \setN} 
			\int_0^1 
				\Phi_{p,m}(\mu_t^N, \nu_t^N) \de t 
			< \infty \, .
	\end{align*} 
Suppose that $\mu_0^N \to \mu_0 \in \cP(\R)$ vaguely. Then there exists a (non-relabeled) subsequence and a solution $(\mu_t,\nu_t) \in \bCE$ such that $\mu_t^N \to \mu_t$ vaguely in $\Prob(\setR)$ for all $t \in [0,1]$, $\bfnu^N \to \bfnu$ vaguely in $\cM((0,1) \times \setR)$, and 
	\begin{align*}
		\liminf_{N \rightarrow +\infty}
			\int_0^1 
			\Phi_{p,m}(\mu_t^N, \nu_t^N) \de t 
		\geq 
		\int_0^1 \Phi_{p,m}(\mu_t, \nu_t) \de t . 
	\end{align*}
		
\end{proposition}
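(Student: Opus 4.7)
My plan is to extract the necessary compactness from the finite-action assumption via standard Hölder-type bounds on the momenta, use the asymptotic continuity equation to promote these to equi-continuity of the dual pairings $t\mapsto\int\varphi\,d\mu_t^N$, extract pointwise-in-time vague limits of $\mu_t^N$ by Arzelà--Ascoli together with a vague limit of $\bfnu^N$ by Banach--Alaoglu, and finally pass to the limit in the CE and use the convex-integral lower semicontinuity lying behind property \eqref{i:lsc}.

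\textbf{A priori estimates.} By Remark~\ref{rmk:finiteaction_abscont}, finiteness of $\Phi_{p,m}(\mu_t^N,\nu_t^N)$ forces $d\mu_t^N=\rho_t^N\,dx$ with $\rho_t^N\in[0,M]$, so $\mu_t^N(K)\leq M|K|$ for every compact $K\subset\R$. For the momentum I apply Hölder's inequality in the form
\[
|\nu_t^N|(K)\leq \Phi_{p,m}(\mu_t^N,\nu_t^N)^{1/p}\Big(\int_K m(\rho_t^N)\,dx\Big)^{(p-1)/p}\leq C_K\,\Phi_{p,m}(\mu_t^N,\nu_t^N)^{1/p},
\]
using that $m$ is bounded on $[0,M]$; a further Hölder in time gives $\int_0^1 |\nu_t^N|(K)\,dt\leq C_K A^{1/p}$ uniformly in $N$. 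Hence $|\bfnu^N|((0,1)\times K)$ is uniformly bounded for every compact $K$, which will supply vague precompactness of $\bfnu^N$ in $\cM((0,1)\times\R)$.

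\textbf{Time regularity and extraction.} For $\varphi\in C_c^1(\R)$ supported in $K$, the asymptotic CE yields
\[
\int\varphi\,d\mu_t^N-\int\varphi\,d\mu_s^N=\int_s^t\!\!\int\varphi'\,d\nu_r^N\,dr+\int_s^t\cR_r^N\varphi\,dr,
\]
where the first term is bounded by $\|\varphi'\|_\infty C_K A^{1/p}|t-s|^{(p-1)/p}$ and the second vanishes as $N\to\infty$ by weak $L^1(0,1)$-convergence of $\cR^N\varphi$ tested against $\mathbf{1}_{[s,t]}\in L^\infty$. Together with $\mu_t^N(K)\leq M|K|$, a diagonal extraction over countable dense sets of times and test functions produces a subsequence along which $\mu_t^N\to\mu_t$ vaguely for every $t\in[0,1]$, with $t\mapsto\mu_t$ vaguely continuous, and simultaneously $\bfnu^N\to\bfnu$ vaguely. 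Disintegrating $\bfnu=\int_0^1 \nu_t\,dt$ (which is admissible thanks to the uniform slab bounds on $|\bfnu^N|$) gives the candidate limit.

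\textbf{CE and lower semicontinuity.} Passing to the limit in the integrated CE tested against products $\psi(t)\varphi(x)$ (then extending to $\xi\in C_c^1((0,1)\times\R)$ by density) shows $(\mu_t,\nu_t)\in\bCE$. The liminf inequality then follows from the joint vague lower semicontinuity of the convex $p$-homogeneous integral $(\bfmu,\bfnu)\mapsto\iint \phi_{p,m}\,d(t,x)$ on $(0,1)\times\R$, which is the space--time version of \eqref{i:lsc}. \textbf{The main obstacle} is the delicate interplay between the defect $\cR^N$ and the time variable: the defect converges to zero only \emph{weakly} in $L^1(0,1)$, so one must pair it with $L^\infty$ factors (characteristic functions of time intervals, or smooth $\psi$'s) and work with time-integrated identities rather than pointwise-in-$t$ ones. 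This forces the equi-continuity estimate and the passage to the limit in the CE to be organised around time integration, which is why vague convergence of $\nu_t^N$ at individual times is neither needed nor available, and one must instead rely on the joint space--time lower semicontinuity above.
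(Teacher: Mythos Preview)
Your strategy is correct and shares all the ingredients with the paper's: H\"older bounds on the momentum from the action, vague compactness of $\bfnu^N$, disintegration of the limit, passage to the limit in the integrated CE, and the space--time lower semicontinuity~\eqref{i:lsc}. The difference is organisational. The paper does \emph{not} go through Arzel\`a--Ascoli: it first extracts a subsequence along which $\bfnu^N\to\bfnu$ vaguely (and uses Dunford--Pettis, via the equi-integrability coming from the H\"older bound, to justify the disintegration $\bfnu=\int_0^1\nu_t\,dt$), and \emph{then} for each fixed $t$ writes
\[
\int\varphi\,d\mu_t^N=\int\varphi\,d\mu_0^N+\int_0^t\!\!\int\partial_x\varphi\,d\nu_\tau^N\,d\tau+\int_0^t\cR_\tau^N\varphi\,d\tau
\]
and passes to the limit term by term: the first by hypothesis, the second by vague convergence of $\bfnu^N$ tested against $\mathbbm{1}_{[0,t]}(\tau)\partial_x\varphi(x)$ (using $|\bfnu|(\{0,t\}\times\R)=0$ from the disintegration), the third by weak $L^1$ convergence against $\mathbbm{1}_{[0,t]}$. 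This simultaneously yields convergence of $\mu_t^N$ at \emph{every} $t$ along the \emph{same} subsequence and the CE for the limit, with no diagonal extraction over times.

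Your Arzel\`a--Ascoli route works too, but as written the equicontinuity step is slightly off: saying the defect term $\int_s^t\cR_r^N\varphi\,dr$ ``vanishes as $N\to\infty$'' for fixed $s,t$ is not the same as having a modulus of continuity uniform in $N$, which is what Arzel\`a--Ascoli requires. What you actually need (and have) is that weak $L^1$ convergence of $(\cR^N\varphi)_N$ implies, by Dunford--Pettis, equi-integrability, hence $\sup_N\big|\int_s^t\cR_r^N\varphi\,dr\big|\to 0$ as $|t-s|\to 0$; this is the correct uniform estimate to feed into the diagonal extraction. The paper sidesteps this by anchoring at $s=0$ and never needing equicontinuity at all.
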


\begin{proof}
	We denote by $\rho_t^N$, $j_t^N$ the corresponding densities of $\mu_t^N$ and $\nu_t^N$ with respect to the Lebesgue measure on $[0,1]\times \setR$. Then an application of the H\"{o}lder inequality yields for every Borel set $B\subset \setR$ and $t_1<t_2 \in [0,1]$ the bound 
	\begin{align}	
	\begin{aligned} \label{eq:equiintegr_jN}
	\int_{t_1}^{t_2} \int_B | j_t^N | \de x \de t 
	& \leq \left( \int_{t_1}^{t_2} \int_B m(\rho_t^N) \de x \de t \right)^{\frac{1}{p'}}  \left( \int_0^1 \Phi_{p,m}(\mu_t^N, \nu_t^N) \de t \right)^{\frac{1}{p}} \\
	& \leq \left( \| m \|_{\infty} \Leb^1(B) (t_2-t_1)  \right)^{\frac{1}{p'}}A^{\frac{1}{p}}
	\end{aligned}
	\end{align}
	for any $N \in \setN$, for $p'$ conjugate of $p$ (note that $m \in \tL^\infty(0,M)$ by concavity and $m(0)=0=m(M)$). Consequently there exists $\bfnu \in \cM([0,1]\times \setR)$ and a subsequence (not relabeled) such that $\bfnu^N \to \bfnu$ vaguely as $N \rightarrow +\infty$.

	 Moreover, \eqref{eq:equiintegr_jN} shows that the map $t \mapsto |\nu_t^N|(B)$ is equi-integrable in $[0,1]$ and uniformly bounded in $L^1(0,1)$, for every $B \subset \setR$. By Dunford--Pettis \cite[Theorem~4.30]{Brezis:2011}, this in particular implies that $\bfnu$ satisfies  for every $R>0$ 
	\begin{align}	\label{eq:L1_decomp_nu}
	|\bfnu|(B_R \times I) = \int_I m_R(t) \de t, \quad \forall I \subset([0,1), \, B_R \subset \setR, \quad \text{with }m_R \in \tL^1(0,1),
	\end{align}
	so that we can write $\bfnu=\int_0^1 \nu_t \de t$, intended as in \eqref{eq:bnu}, for a Borel family of (signed) finite measures $\seq{ \nu_t }_{t \in (0,1)}$ (see \cite[Lemma~4.5]{dolbeault2012} for a similar argument). 
	
	By assumption we know that $\mu_0^{N} \to \mu_0 \in \cP(\R)$ vaguely (in fact narrowly, by Remark~\ref{rem:time_reg_finite_energy}).  
	We now claim that for every time $t\in [0,1]$,  the sequence $\mu_t^{N}$ vaguely converges to some $\mu_t \in \cP(\R)$, as a consequence of $(\mu_t^N, \nu_t^N)$ being asymptotically a solution to $\bCE$.
	
	Indeed, consider a function $\vphi \in C_c^1(\setR)$ and define $\phi(\tau,x):= \mathbbm{1}_I(\tau) \partial_x \vphi(x)$, $I:=[0,t]$.  Note that the discontinuity set of $\phi$ is concentrated on $D:=\{0,t\} \times \setR$ and from \eqref{eq:L1_decomp_nu} one has $|\bfnu|(D)=0$. It follows \cite[Proposition 5.1.10]{AmbrosioGigliSavare08} that
	\begin{align*}
	\lim_{N \rightarrow +\infty} \int_I \int_\setR \partial_x \vphi \de \nu_\tau^{N} \de \tau
	= \lim_{N \rightarrow +\infty} \int_{I \times \setR} \phi \de \bfnu^{N}
	= \int_{I \times \setR} \phi \de \bfnu
	= \int_I \int_\setR \partial_x \vphi \de \nu_\tau \de \tau. 
	\end{align*}
	
	Moreover, by definition of $\cR_\tau^{N}$ and integrating over $I$ we obtain
	\begin{align}	
	\begin{aligned} \label{eq:samelimit_mutN}
	\lim_{N \rightarrow +\infty} \int_\setR \vphi \de \mu_t^{N} 
	&= \lim_{N \rightarrow +\infty} \left( \int_\setR \vphi \de \mu_0^{N} + \int_0^t \int_\setR \partial_x \vphi \de \nu_\tau^{N} \de \tau + \int_0^t \cR_\tau^{N} \vphi \de \tau \right) \\
	&= \int_\setR \vphi \de \mu_0 + \int_I \int_\setR \partial_x \vphi \de \nu_\tau \de \tau \, ,
	\end{aligned}
	\end{align}
	where we used that $\mu_0^{N} \to \mu_0$ vaguely and that $\cR^{N} \vphi \to 0$ weakly in $\tL^1(0,1)$. 
Hence, \eqref{eq:samelimit_mutN} shows that $\mu_t^{N}$ is vaguely converging to some $\mu_t\in \cP (\setR)$ as $N \rightarrow +\infty$ for every $t \in [0,1]$, with  $(\mu_t,\nu_t) \in \bCE$.
	Finally, the lower semicontinuity estimate follows from the lower semicontinuity of $\Phi_{p,m}$, see \eqref{i:lsc}. 
\end{proof}

We are ready to prove the $\Gamma$-$\liminf$ inequality in Theorem \ref{thm:Gamma-conv}.

\begin{proof}[Proof of Theorem \ref{thm:Gamma-conv}, $\Gamma$-$\liminf$]
Let $\mu_0^N=\cPC^N(x_0^N)$, $ \mu_1^N=\cPC^N(x_1^N)$ be two sequences in $\cP(\setR)$ converging vaguely to $\mu_0,\mu_1$, respectively.  We claim that
\begin{align}	\label{eq:liminf_ineq}
	\liminf_{N \rightarrow +\infty} \bNW(\mu_0^N, \mu_1^N) \geq \bpW(\mu_0,\mu_1).
\end{align}
Applying Lemma~\ref{lemma:geodesics_discrete}, with no loss of generality we can write 
\begin{align}	\label{eq:unif-bound-li}
	\begin{aligned}
	\bNW(\mu_0^N, \mu_1^N)^p = \int_0^1 \Phi_{p,m}^N(x^N(t), \dot{x}^N(t)) \de t
\ , \ \ 
	E:=\sup_{N \in \setN} \sup_{t \in [0,1]}
		\Phi_{p,m}^N(x^N(t), \dot{x}^N(t))  < \infty \,  ,
	\end{aligned}
\end{align}
where $x^N \in W^{1,p}(0,1 ; \cK_N)$ is the discrete geodesic between $x_0^N$ and $x_1^N$ with respect to $d_{p,m}^N$.
We define $\mu_t^N:= \cPC^N(x_t^N)$ with density (w.r.t the Lebesgue measure on $\setR$) $\rho_t^N$ given by the piecewise-constant map
\begin{align}	\label{eq:defmuN}
	\rho_t^N(x) =  \sum_{i=0}^{N-1} R_i^N(t) \mathbbm{1}_{\left[  x_i^N(t), x_{i+1}^N(t) \right)}(x) \, , 
\end{align}
and define the momentum fields as
\begin{align}	\label{eq:defjN}
	j_t^N(x):= \sum_{i=0}^{N-1} \dot{x}_i^N(t) R_i^N(t) \mathbbm{1}_{\left[x_i^N(t), x_{i+1}^N(t)\right)}(x), 
	\quad 
	\nu_t^N := j_t^N \Leb^1 .
\end{align}

\medskip
\noindent
\textit{Step 1: Uniform error estimate.} \, Let us pick any $\vphi \in C_c^{\infty}(\setR)$. We claim that
\begin{align}	\label{eq:error-CE}
	\sup_{t \in (0,1)} \left| \left( \partial_t \mu_t^N + \partial_x \nu_t^N, \vphi \right)  \right| \leq c N^{-\frac{1}{p'}}  \| \vphi \|_{C^2} \left(\diam (\supp \vphi) + 2 \right), 
\end{align}
for some $c=c(\theta,E,p) \in \setR^+$, where $p'$ is the conjugate of $p$. 
For simplicity, in this proof we use the shorthand notation
\begin{align*}
	\Delta x_i := x_{i+1}^N(t) - x_i^N(t), \	\quad \Delta \dot{x}_i :=  \dot{x}_{i+1}^N(t) - \dot{x}_i^N(t), \quad R_i:= R_i^N(t), \quad I_i := [x_i,x_{i+1}]  .
\end{align*}
In particular, we have that  $\dot{R_i} = -N R_i^2 \Delta \dot{x}_i \, $, from which we infer
\begin{align*}
\frac{\de}{\de t} \int \vphi \de \mu_t^N 	
	= -\sum_{i =0}^{N-1} R_i \Delta \dot{x}_i \fint _{I_i} \vphi \de x - R_i \big( \vphi(x_i)\dot{x}_i - \vphi(x_{i+1})\dot{x}_{i+1} \big) .
\end{align*}
Using a second order Taylor expansion for $\vphi$, we know there exist a family of points $\seq{ c_x }_x$ such that for every $x \in I_i$, we have $c_x \in (x, x_{i+1})$ and 
\begin{align*}
	\vphi(x) = \vphi(x_{i+1}) + \vphi'(x_{i+1})(x-x_{i+1}) + \frac12 \vphi''(c_x) (x-x_{i+1})^2 \, , 
\end{align*}
for $i=0, \dots, N-1$. It follows that
\begin{align*}
	\frac{\de}{\de t} \int \vphi \de \mu_t^N 	
	= \sum_{i=0}^{N-1} R_i \dot{x}_i \left( \vphi(x_{i+1}) - \vphi(x_i) \right) + \left( \cR_t^N , \vphi \right)
	= \left(\nu_t^N, \partial_x \vphi \right) + \left( \cR_t^N , \vphi \right) ,
\end{align*}
where the error $\cR_t^N \in \cD'(\setR)$ is given  by
\begin{align*}
	\left( \cR_t^N , \vphi \right) = \sum_{i=0}^{N-1} \frac{1}{2N}\Delta \dot{x}_i \vphi'(x_{i+1}) - \frac{1}{2} R_i \Delta \dot{x}_i \fint_{x_i}^{x_{i+1}} \vphi''(c_x) (x-x_{i+1})^2 \de x .
\end{align*}
 By $\theta \in \tL^{\infty}(0,M)$ and \eqref{eq:unif-bound-li}, we deduce the uniform bound
\begin{align}
		\label{eq:bound_timederivative_xi}
	\frac{1}{N+1} \sum_{i=0}^N |\dot{x}_i|^p\leq \| \theta \|_{\infty}^{p-1} \Phi_{p,m}^N(x, \dot{x}) \leq \| \theta \|_{\infty}^{p-1} E
		\quad \Longrightarrow \quad 
		\sup_i \left| \dot{x}_i \right| \leq c N^{\frac{1}{p}},
\end{align}
for some $ c=c(\| \theta \|_{\infty}, E, p) \in \setR^+$.
Consequently, on one hand by $x_i < x_{i+1}$ we get
\begin{gather*}
	\left| \frac{1}{N} \sum_{i=0}^{N-1} \Delta \dot{x}_i \vphi'(x_{i+1})  \right|
	\leq \frac{1}{N}  \sum_{i=0}^{N-1} | \dot{x}_i | |  \vphi'(x_i) - \vphi'(x_{i+1}) |  + \frac{1}{N} \Big(  | \dot{x}_0 \vphi'(x_0)| + | \dot{x}_N \vphi'(x_N) | \Big) \\
	\quad 
	\leq c N^{-\frac{1}{p'}} \left( \Lip \vphi' \sum_{i \in \Sph^N} \Delta x_i + 2 \Lip \vphi \right) \leq c N^{-\frac{1}{p'}}(\text{diam}(\supp \vphi)+2 )\| \vphi \|_{C^2} ,
\end{gather*}
where we set $\Sph^N:= \{ i \in \{ 0 , \dots, N-1 \} \suchthat I_i \cap \supp \vphi \neq \emptyset \}$.
On the other hand, using once more that $x_i < x_{i+1}$ and \eqref{eq:bound_timederivative_xi} we obtain
\begin{align*}
	&\left| \sum_{i=0}^{N-1} R_i \Delta \dot{x}_i \fint_{x_i}^{x_{i+1}} \vphi''(c_x) (x-x_{i+1})^2 \de x  \right|
		\leq \frac{\left\| \vphi \right\|_{C^2}}{N} \sum_{i\in \Sph^N} | \Delta \dot{x}_i | \Delta x_i 
	\\
		&\qquad\leq 2cN^{-\frac{1}{p'}} \left\| \vphi \right\|_{C^2} \sum_{i\in \Sph^N} \Delta x_i 
		\leq 2c N^{-\frac{1}{p'}} \left\| \vphi \right\|_{C^2} \text{diam}(\supp \vphi),
\end{align*}
which concludes the proof of \eqref{eq:error-CE}.

\medskip
\noindent
\textit{Step 2: Compactness.} \, 
Consider $\mu_t^N, \nu_t^N$, defined in \eqref{eq:defmuN}, \eqref{eq:defjN} and $\bfnu$ as in \eqref{eq:bnu}. We claim that up to subsequence, we have that 
\begin{align*}	
	 \bfnu^N \to \bfnu = \int_0^1 \nu_t \de t \in \cM((0,1) \times \setR) \quad \text{and} \quad 
	\mu_s^N \to \mu_s \in \cP(\setR) \quad \text{vaguely}, 
\end{align*}
for all $s \in [0,1]$, with $(\mu_t,\nu_t) \in \bCE$.
To show this, we prove that the assumptions of Proposition \ref{prop:compactness_asymptCE} are satisfied. The fact that $(\mu_t^N, \nu_t^N)_N$ is asymptotically a solution to $\bCE$ directly follows from \eqref{eq:error-CE}, the convergence $\mu_0^N \to \mu_0$ is already guaranteed by construction whereas the action bound follows from \eqref{eq:unif-bound-li} and \eqref{eq:A<AphiN}. We can then apply Proposition \ref{prop:compactness_asymptCE} and conclude.

\medskip
\noindent
\textit{Step 3: Action estimate}. \
We conclude the proof of \eqref{eq:liminf_ineq} applying the lower semicontinuous estimate provided by Proposition \ref{prop:compactness_asymptCE}. Precisely we have
\begin{align*}
	\liminf_{N \rightarrow +\infty} \bNW(\mu_0^N, \mu_1^N)^p
		&= \liminf_{N \rightarrow +\infty} \int_0^1\Phi_{p,m}^N(x^N, \dot{x}^N) \de t 
		\geq  \liminf_{N \rightarrow +\infty} \int_0^1 \Phi_{p,m}(\mu_t^N, \nu_t^N) \de t \\
		&\geq \int_0^1 \Phi_{p,m}(\mu_t, \nu_t) \de t \geq \bpW(\mu_0,\mu_1)^p \, , 
\end{align*}
where in the first inequality we used \eqref{eq:A<AphiN}, and at last $(\mu_t,\nu_t) \in \bCE$.
\end{proof}
 
\section{Proof of the main result: convergence of the JKO schemes}
\label{sec:JKO}
In this last section of the paper, we prove the discrete-to-continuum convergence of the JKO-schemes, i.e. Theorem \ref{thm:JKO_q}.

Fix $\phi_{2,m}$ as in Definition~\ref{def:action_density_functions} and $q \in [1,2)$. For a given initial sequence $\mu_N^{(0)} \in \E^N$ and $\mu^{(0)} \in \cP_2(\setR)$, recall the definition of the associated operators $\cJ_{\tau,N}$, $\cJ_\tau$ as given in \eqref{eq:def_JtauN}, \eqref{eq:def_Jtau}. In order to prove the convergence of the minimisers, our goal is to show a $\Gamma$-convergence result and an equi-coercivity property of the families $ \seq{\cJ_{\tau,N}}_N$ in the $\bW_q$ topology. 

Assuming that $\mu_N^{(0)} \to \mu^{(0)}$ in $\bW_q$, one of the major difficulties is to show that $\cJ_{\tau,N}$ $\Gamma$-converge to $\cJ_\tau$ as $N \to \infty$ despite the dependence on the quadratic (generalised) Wasserstein distance. The extra assumption \eqref{eq:unif_bounds_2ndmoms} of uniformly bounded second moments on $\seq{\mu_N^{(0)}}_N$ is crucial to this purpose. Also important is the fact that such assumption can be "iterated", namely from \eqref{eq:unif_bounds_2ndmoms} we can deduce a similar bound $\mu_N^{(1)}$, and iteratively for $\mu_N^{(n)}$, for every $n \in \setN$.

The proof of the $\Gamma$-convergence relies on an improved version of the $\Gamma$-$\limsup$ in Theorem \ref{thm:Gamma-conv}, which is the main technical difficulty of this last section.

\subsection{An improved limsup inequality}
The proof is based on the regular approximation result shown in Proposition~\ref{prop:approximation} and Proposition~\ref{prop:upper_regular}. 

\begin{proposition}[Limsup inequality, JKO]
		\label{prop:limsup_JKO}
	Fix $\mu_0$, $\mu_1 \in \Prob_2(\setR)$ and an approximating sequence $\cE^N \ni \mu_0^N \to \mu_0$ in $\bW_q$ for some $q>1$, satisfying \eqref{eq:unif_bounds_2ndmoms}. Then, for every $\eps>0$, there exists $\bar  \mu_1 \in \Prob_2(\setR)$ and a sequence $\seq{ \mu_1^N \in \E^N}_N$ such that
	\begin{enumerate}[(i)]
	\setlength{\itemsep}{.2\baselineskip}
		\item \label{i:lem:limsup_JKO_1}
		$\displaystyle
		 \mu_1^N \to \bar \mu_1
				\quad
			\text{in } 
				\big( \Prob_q(\setR), \bW_q \big) 
		$.
		\item \label{i:lem:limsup_JKO_2}
		$\displaystyle
			\bW_q(\mu_1,\bar \mu_1) \leq \eps
		$.
		\item \label{i:lem:limsup_JKO_3}
		$\displaystyle
			\limsup_{N \to \infty}
				\bW_{2,m}^N
					\big(
						\mu_0^N,  \mu_1^N 
					\big)
			\leq
				\btwoW(\mu_0, \mu_1) + \eps
		$.
	\end{enumerate}
\end{proposition}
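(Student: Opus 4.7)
Since (iii) is vacuous when $\btwoW(\mu_0, \mu_1) = +\infty$, I will assume $\btwoW(\mu_0, \mu_1) < \infty$ and pick a $\btwoW$-geodesic $(\mu_t, \nu_t)$. For $\eta > 0$ and $\alpha > 0$ both to be chosen small in terms of $\eps$, first apply Proposition~\ref{prop:approximation} to this geodesic to obtain a regular curve $(\tilde\mu_t, \tilde\nu_t) \in \bCE(\tilde\mu_0, \tilde\mu_1)$ with compact support in $K \Subset \R$, smooth densities bounded strictly between $l > 0$ and $\tilde M < M$, action at most $\btwoW(\mu_0, \mu_1)^2 + \eta$, and the quantitative closeness $\bW_q(\mu_i, \tilde\mu_i)^q + \int_{K^c}(|x|^2 + 1)\,d\mu_i \leq C\eta$ for $i=0,1$. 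Then apply Proposition~\ref{prop:upper_regular} to $(\tilde\mu_t, \tilde\nu_t)$ to obtain discrete curves $\tilde x^N \in C^\infty([0,1]; \cK_N)$ with $R_i^N(\tilde x^N(t)) \leq \tilde M$ uniformly, $\cE^N(\tilde x^N(1)) \to \tilde\mu_1$ in $\bW_\infty$, and asymptotic action at most $\btwoW(\mu_0, \mu_1)^2 + \eta$.

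Write $\mu_0^N = \cE^N(z^N)$. The naive choice $x^N := \tilde x^N$ fails because $\tilde x^N(0) \neq z^N$ in general, with the discrepancy only controlled in $\bW_q$ (too weak for the action $\Phi_{2,m}^N$). I would instead glue these via a localisation in the quantile variable: fix a smooth cutoff $\chi:(0,1)\to[0,1]$ equal to $1$ on a central quantile interval $[\alpha, 1-\alpha]$ and vanishing outside $[\alpha/2, 1-\alpha/2]$, together with a smooth time profile $\phi:[0,1]\to[0,1]$ satisfying $\phi(0)=0$, $\phi(1)=1$, and set
\begin{equation*}
	x_i^N(t) := (1 - \chi(s_i)\phi(t))\, z_i^N + \chi(s_i)\phi(t)\, \tilde x_i^N(t), \qquad s_i := i/(N+1).
\end{equation*}
Then $x^N(0) = z^N$, so $\cE^N(x^N(0)) = \mu_0^N$. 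Set $\mu_1^N := \cE^N(x^N(1))$: on the central quantile region $x_i^N(1) = \tilde x_i^N(1)$, while on the tail $x_i^N(1) = z_i^N$. Taking $N\to\infty$, $\mu_1^N$ converges in $\bW_q$ to $\bar\mu_1$ whose quantile equals $X_{\tilde\mu_1}$ on the central region and $X_{\mu_0}$ on the tail, yielding (i). Property (ii) follows by splitting $X_{\mu_1} - X_{\bar\mu_1}$ on central and tail: on central, controlled by $\bW_q(\mu_1, \tilde\mu_1) \leq C\eta^{1/q}$; on tail (of Lebesgue measure $\alpha$), the $\tL^q$-norm of $X_{\mu_1} - X_{\mu_0}$ is bounded by $C\alpha^{1-q/2}$ via H\"older and the uniform second moments of $\mu_0, \mu_1$. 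Choosing $\alpha, \eta$ small gives (ii). Admissibility $x^N(t) \in \cK_N$ is checked directly from $x_{i+1}^N - x_i^N$, exploiting the strict density gap $1/(N\tilde M) - 1/(NM) > 0$ from $\tilde x^N$ to absorb variations of $\chi$ between consecutive particles.

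The main obstacle is the action estimate. Decomposing $\dot x_i^N$ into shift and flow pieces and using $(a+b)^2 \le 2(a^2 + b^2)$, the action splits as $\leq 2A_N + 2B_N$. The flow term $B_N$ is dominated by $\int_0^1 \Phi_{2,m}^N(\tilde x^N, \dot{\tilde x}^N)\,dt \leq \btwoW(\mu_0, \mu_1)^2 + \eta$, since $R_i^N(x^N)$ is the weighted harmonic mean of $R_i^N(z) \leq M$ and $R_i^N(\tilde x^N) \leq \tilde M$, hence stays strictly below $M$ on the support of $\chi\phi$, keeping $\theta(R_i^N(x^N))$ bounded away from $0$ there. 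The shift term
\begin{equation*}
	A_N = \int_0^1 \phi'(t)^2\, \frac{1}{N+1}\sum_i \frac{\chi(s_i)^2\,|\tilde x_i^N(t) - z_i^N|^2}{\theta(R_i^N(x^N(t)))}\,dt
\end{equation*}
is the delicate piece: the inner sum is an $\bW_2$-type ``central'' discrepancy between $\tilde x^N(0)$ and $z^N$, while only $\bW_q$-convergence of $\mu_0^N$ is available globally. The localisation resolves this: on the central quantile interval $[\alpha/2, 1-\alpha/2]$, the quantiles $X_{\mu_0^N}$ are uniformly bounded in $\tL^\infty$ via Chebyshev applied to \eqref{eq:unif_bounds_2ndmoms}; interpolating the $\tL^q$-convergence $X_{\mu_0^N} \to X_{\mu_0}$ with this local $\tL^\infty$ bound yields $\tL^2$-control on the central interval, and the quantitative closeness of $\mu_0$ to $\tilde\mu_0$ from Proposition~\ref{prop:approximation}(iii) then bounds the sum by $O(\eta)$ in the limit. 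Choosing the profile $\phi$ with enough vanishing at $t=0$ (e.g.\ $\phi(t) = t^k$ for $k$ large, tuned to the vanishing rate of $\theta$ near the density threshold $M$) renders $A_N$ arbitrarily small, and taking $\alpha, \eta \to 0$ yields (iii).
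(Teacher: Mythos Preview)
Your proposal has one fixable issue and one genuine gap.

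The fixable issue: using $(a+b)^2 \le 2(a^2+b^2)$ on the velocity yields action $\le 2A_N + 2B_N$, so even with $A_N\to 0$ and $B_N \to \btwoW(\mu_0,\mu_1)^2$ you only get $\limsup \bW_{2,m}^N \le \sqrt{2}\,\btwoW$, not $\btwoW + \eps$. This can be repaired by the weighted-$L^2$ triangle inequality, or, as the paper does, by inserting an intermediate configuration $z^N$ and using the triangle inequality for $d_{2,m}^N$ itself, separating ``shift'' and ``flow'' into two consecutive curves rather than one combined curve.

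The genuine gap is that you never secure a uniform margin $R_i^N(x^N(t)) \le M'<M$, and without it the action control collapses. Membership $w^N\in\cK_N$ only gives $R_i^N(w^N)\le M$, and equality can occur; then on the region $\chi\equiv 1$ your harmonic-mean identity gives $R_i^N(x^N(t))\to M$ as $\phi(t)\to 0$, so $\theta(R_i^N(x^N(t)))\to 0$ in the case $\theta(M)=0$ (which is allowed). The claim that $B_N$ is dominated by $\int_0^1 \Phi_{2,m}^N(\tilde x^N,\dot{\tilde x}^N)\,dt$ then fails, since $B_N$ carries $\theta(R_i^N(x^N))^{-1}$, not $\theta(R_i^N(\tilde x^N))^{-1}$; the ``$\phi=t^k$'' device does not fix this without a quantitative vanishing rate of $\theta$ at $M$, which is not assumed. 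In the transition zone of $\chi$ the situation is worse: the correction term $(\chi(s_{i+1})-\chi(s_i))\phi(t)(\tilde x_{i+1}^N - z_{i+1}^N)$ is of order $\|\chi'\|_\infty C(\alpha)/N$, the same scale as the spacing, so neither admissibility $x^N(t)\in\cK_N$ nor the density bound is clear. Finally, Proposition~\ref{prop:approximation} requires $\sup_t\|\rho_t\|_{L^\infty}<M$ strictly, which the geodesic need not satisfy. The paper resolves all of this by a preliminary dilation $x\mapsto (1+\lambda)x$ (its Case~2): this forces $R_i^N((1+\lambda)w^N)\le M/(1+\lambda)<M$ and $\|\rho_t^\lambda\|_{L^\infty}\le M/(1+\lambda)$, at a cost of order $\sqrt{\lambda}$ in the distance, after which a straight-line competitor from $w^{N,\lambda}$ to the intermediate $z^N$ has $\theta$ uniformly bounded below and the estimates go through cleanly. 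This reduction step is the missing idea in your argument.
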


\begin{remark}[Relation to Theorem~\ref{thm:Gamma-conv}, $\Gamma$-limsup]
	Note that Proposition~\ref{prop:limsup_JKO} is not a trivial consequence of the $\Gamma$-limsup in Theorem~\ref{thm:Gamma-conv}, due to the fact $\mu_0^N$ does not necessarily coincide with the recovery sequence for $\mu_0$ obtained in Theorem~\ref{thm:Gamma-conv}. 
\end{remark}
\begin{proof}
	Without loss of generality, we assume $\btwoW(\mu_0, \mu_1) < \infty$.
	Let $w^N \in \cK^N$ such that $\mu_0^N = \cE^N(w^N)$. Let $(\bfmu,\bfnu) \in \CE$ be the $\btwoW$-geodesic \eqref{i:geodesics_convexity} between $\mu_0$ and $\mu_1$. 
	
	\smallskip 
	\noindent
	{\underline{Case 1}}: \
	We first assume that 
	\begin{align}
		\label{eq:assumpt_Case1}
		\max
		\left\{ 
		\sup_{N \in \N} \sup_{i \leq N}
			R_i^N(w^N) 
		, \, 
			\sup_{t \in [0,1]} \left\| \rho_t \right\|_{L^\infty(\R)}
		\right\}
			= \bar M < M \, , 
	\end{align}
where $\rho_t$ denotes the density of $\mu_t$ w.r.t. $\Leb^1$.
In this case, for every $\eta>0$, we apply Proposition \ref{prop:approximation} and find regular curves $(\tilde \bfmu, \tilde \bfnu) \in \CE$ in the sense of Definition~\ref{def:regular_measures} with densities $
\tilde \rho_t, \tilde j_t$ with
\begin{align*}
	\supp(\tilde \rho_t) \cup  
	\supp(\tilde j_t) \subset K
			\tand
	 0<l\leq \tilde \rho_t(x) \leq \tilde M <M \, ,
\end{align*}
for a compact set $K \Subset \setR$,  $l>0$ (both depending on $\bar M$ and $\eta$), and $\tilde M<M$ (only depending on $\bar M$), as well as satisfying the  bounds for all $s \in [0,1]$
\begin{align}	\label{eq:bounds_tilde}
	\begin{gathered}
		\int_0^1 \Phi_{2,m}(\tilde \mu_t, \tilde \nu_t) \de t \leq \int_0^1 \Phi_{2,m}(\mu_t, \nu_t) \de t + \eta ,\\ 
	\max
	\left\{ 
		\Big( \text{diam}(\supp (\tilde \mu_s))^{2-q} \vee 1 \Big)
			\bW_q(\mu_s, \tilde \mu_s)^q 
		\, ,  
			\int_{\supp (\tilde \mu_s)^c} (|x|^2+1) \de \mu_s(x)
	\right\}			
\leq \eta \, .
	\end{gathered}
\end{align}
Consequently, we apply Proposition \ref{prop:upper_regular} to $(\tilde \bfmu, \tilde \bfnu)$ and find 
 curves $\tilde x^N \in C^\infty([0,1];\cK_N)$ such that
	\begin{gather}	\label{eq:proof_JKO_controls}
		\begin{gathered}
		\tilde \mu_0^N := \E^N(\tilde w^N) \to  \tilde \mu_0
			\, , \quad
		\tilde \mu_1^N := \E^N(\tilde y^N) \to \tilde \mu_1 
			\quad 
		\text{in }	\bW_\infty \, , \\
		\limsup_{N \to \infty} \int_0^1 \Phi_{2,m}^N(\tilde x^N(t), \dot{\tilde x}^N(t)) \de t \leq \int_0^1 \Phi_{2,m}(\tilde \mu_t, \tilde \nu_t) \de t \, ,
		\end{gathered}
	\end{gather}
together with the regularity properties 
for $i=0, \dots, N$, $t \in [0,1]$, $N \in \setN$
	\begin{align}	\label{eq:proof_regularity_xtilde}
		\begin{gathered}
			\tilde x_i^N(t) \in K
				\tand 
			l \leq  R_i^N(\tilde x^N(t)) \leq \tilde M \, .
		\end{gathered}
	\end{align}

We split the analysis of the problem into three different areas: one is inside $\tilde S:= \supp(\tilde \mu_0) = \supp(\tilde w^N)$ (by Proposition \ref{prop:upper_regular}), a second one far away from the compact set $K$ (and thus from $\tilde S$), and the remaining area in between. Precisely, we define
\begin{align*}
	i_1 
		&:= \inf 
	\left\{ 
		i \suchthat w_i^N > \inf(B_1(K))
	\right\} 
		\, , \quad 
	&&i_2:= \sup
	\left\{
		i \suchthat w_i^N < \sup(B_1(K))
	\right\} \, , \\
	\tilde i_1 
		&:= \inf
	\left\{ 
		i \suchthat w_i^N \geq \inf(\tilde S)
	\right\} 
		\, , \quad 
	&&\tilde i_2:= \sup
		\left\{
			i \suchthat w_i^N \leq \sup(\tilde S)
		\right\} \, ,
\end{align*}
where $B_1(K) = \{ x \in \R \ : \ d(x,K) < 1 \}$. See Figure~\ref{fig:limsup}.
\medskip
\noindent
\textit{Step 1: Definition of $y^N$.}\
We define for every $N\in\setN$ the particles $y^N \in \cK_N$ as follows: for $i \in \{ i_1 , \dots , i_2 \}$, we choose the particles associated with $\tilde y^N$, otherwise we select the particles associated with $w^N$.
Precisely, we define
\begin{align*}
	y_i^N := 
	\begin{cases}
		\tilde y_i^N \, , 	
			&\text{if } i_1 \leq i \leq i_2 \, , \\
		w_i^N \, , 
			&\text{otherwise} \, ,
	\end{cases}
\end{align*}
see Figure \ref{fig:limsup}. We then set $\mu_1^N:=\E^N(y^N)$ and claim it to be the sought recovery sequence.
	\begin{figure}[h!]
		\includegraphics[scale=2.1]{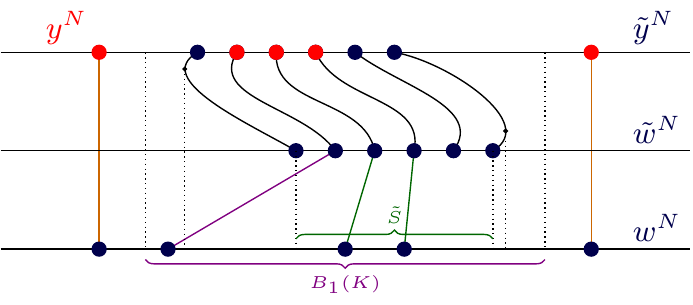}
		\caption{The definition of the recovery sequence $y^N$. }
			\label{fig:limsup}
	\end{figure}

Our first goal is to estimate $d_{2,m}^N(w^N, y^N) = \bW_{2,m}^N(\mu_0^N, \mu_1^N)$ and to show \eqref{i:lem:limsup_JKO_3}. 
 To do so, we shall introduce an intermediate step $z^N$, defined as
 \begin{align*}
	z_i^N := 
		\begin{cases}
			\tilde w_i^N \, , 	
			&\text{if } i_1 \leq i \leq i_2 \, , \\
			w_i^N \, , 
			&\text{otherwise} \, , \\
		\end{cases}
\;	= 
		\begin{cases}
			\tilde w_i^N \, , 	
			&\text{if }  i_1 \leq i \leq  i_2 \, , \\
			y_i^N \, , 
			&\text{otherwise} \, .
		\end{cases}
\end{align*}
For simplicity, we denote by $I_N$ the set of indexes corresponding to $B_1(K) \setminus \tilde S$, i.e. 
\begin{align*}
	I_N^- := \{ i \suchthat i_1 \leq i < \tilde i_1 \}
		\, , \quad 
	I_N^+:= \{ i \suchthat \tilde i_2 < i \leq i_2 \}
		\, , \quad
	I_N:= I_N^- \cup I_N^+ \, . 
\end{align*}

\noindent
\textit{Step 3: estimate of $d_{2,m}^N(w^N, z^N)$.} \ 
The cost of transporting $w^N$ onto $z^N$ consists in three parts. A trivial one, outside $B_1(K)$, where the particles do not move. The second one is the cost of connecting particles which live inside $\tilde S$, where the $\bW_q$ and the $\bW_2$ are equivalent (up to a factor depending on $\diam(\tilde S)$). The last one is the cost of transporting the particles $w^N$ from $B_1(K) \setminus \tilde S$ to the corresponding particles $z^N$ (which live in $\tilde S$ and coincide with $\tilde w^N$). See Figure~\ref{fig:limsup}. We estimate that in terms of the tails of the second moments of $\mu_0$, see \eqref{eq:step3_3}.

Note that $w_{i_1-1}^N, w_{i_2+1}^N \in \R \setminus B_1(K)$, it is immediate to see that $R_{i_1-1}^N(z^N) \vee R_{i_2}^N(z^N) \leq 1/N$ for $N$ big enough.
By \eqref{eq:assumpt_Case1}, \eqref{eq:proof_regularity_xtilde}, we have 
\begin{align*}
	\sup_{N \in \setN} \sup_{i\leq N} R_i^N(z^N) \leq M'\, , 
	\quad 
		\text{where} \quad 
		M' = \max \{ \tilde M, \bar M \} < M 
\end{align*}
does not depend on $\eta$. 
In particular, we obtain that
\begin{align}
	\begin{aligned}
		\label{eq:step3_1}
	d_{2,m}^N(w^N, z^N)^2 \leq c_\theta 
		\left\|
			w^N - z^N
		\right\|_{\ell_{2,N}}^2
	= \frac{c_\theta}{N+1} \sum_{i=i_1}^{i_2} \left|
	w_i^N - z_i^N
	\right|^2
	\, ,
	\end{aligned}
\end{align}
where $c_\theta := \theta(M')^{-1} < \infty$. 
By construction, on one side we have that (green lines in Figure~\ref{fig:limsup})
\begin{align}
	\begin{aligned}
		\label{eq:step3_2}
	\frac1{N+1} \sum_{i=\tilde i_1}^{\tilde i_2}
		\left|
			w_i^N - z_i^N
		\right|^2
	&\leq
		\frac1{N+1} \diam(\tilde S)^{2-q} 
			\sum_{i=\tilde i_1}^{\tilde i_2}
		\left|
			w_i^N - \tilde w_i^N
		\right|^q	 \\
	&\leq 
		\diam(\tilde S)^{2-q} \bW_q(\mu_0^N, \tilde \mu_0^N)^q \, .
		\end{aligned}
\end{align}

We are left to estimate the remaining terms of the sum in \eqref{eq:step3_1}, i.e. corresponding to $i \in I_N$. By \eqref{eq:bounds_tilde}, we have that $\mu_0((\tilde S )^c) \leq \eta$, whereas by assumption we know that $\mu_0^N \to \mu_0$ vaguely. Therefore, using that $\mu_0$ is absolutely continuous with respect to $\Leb^1$ (in particular, $\mu_0(\partial \tilde S)=0$), we get $\mu_0^N(\tilde S^c) \leq 2\eta$ for $N$ large enough. As a consequence, we infer that, for $\eta < \frac14$, 
\begin{align}
		\label{eq:bound_median}
	\forall  i \in I_N^-, \quad \tilde w_i^N \leq x_{\tilde \mu_0^N}, \quad 
	\forall  i \in I_N^+, \quad \tilde w_i^N \geq x_{\tilde \mu_0^N} \, ,
\end{align}
where $x_{\tilde \mu_0^N} = X_{\tilde \mu_0^N}(\frac12)$ is the median of $\tilde \mu_0^N$.
Note that $w_i^N \in B_1(K) \setminus \tilde S$ if and only if $i \in I_N$ (corresponding to the violet lines in Figure~\ref{fig:limsup}). Together with \eqref{eq:bound_median}, this implies 
\begin{align}
	\nonumber
	\frac1{N+1} \sum_{i \in I_N}
		\left|
			w_i^N - z_i^N
		\right|^2
	&= 
	\frac1{N+1} \sum_{i \in I_N}
		\left|
			w_i^N - \tilde w_i^N
		\right|^2 
\leq 
		\frac1N \sum_{i \in I_N}
		\left|
			w_i^N - x_{\tilde \mu_0^N} 
		\right|^2 
\\ \label{eq:step3_3}
&=
		\int_{B_1(K) \setminus \tilde S}
			\left|
				x - x_{\tilde \mu_0^N}
			\right|^2
				\de \mu_0^N(x)
				\, .
\end{align}
Thanks to the fact that $\| X_{\tilde \mu_0^N} - X_{\tilde \mu_0} \|_{L^\infty(0,1)} \to 0$ by \eqref{eq:proof_JKO_controls} and $\mu_0^N \to \mu_0$ in $\bW_q$, by \eqref{eq:step3_3} we obtain that
\begin{align}
	\begin{aligned}
		\label{eq:step3_4}
	\limsup_{N \to \infty}
		\frac1{N+1} \sum_{i \in I_N}
		\left|
		w_i^N - z_i^N
		\right|^2
	&\leq 
		\int_{B_1(K) \setminus \tilde S}
		\left|
		x - x_{\tilde \mu_0}
		\right|^2
		\de \mu_0(x)
		\\
	&\leq 
		2
		\int_{\tilde S^c}
			| x |^2
		\de \mu_0(x)
		+ 2 \eta
		\left| x_{\tilde \mu_0} \right|^2 
	\end{aligned}
\end{align}
where we used $\mu_0(\partial \tilde S)=0$ and $\mu_0(\tilde S^c) \leq \eta$. 
Altogether, using \eqref{eq:step3_1}, \eqref{eq:step3_2}, \eqref{eq:step3_4}, and the fact that $\mu_0^N \to \mu_0$ and $\tilde \mu_0^N \to \tilde \mu_0$ in $\bW_q$, taking the limsup in $N \to \infty$ we obtain 
\begin{align}	\nonumber
	\limsup_{N \to \infty} d_{2,m}^N(w^N, z^N)^2
		&\leq 
			2 c_\theta 
			\left(
				\diam(\tilde S)^{2-q} \bW_q(\mu_0, \tilde \mu_0)^q
					+
				\int_{\tilde S^c}
					|
						x 
					|^2
				\de \mu_0(x)
			+ 
				 \eta
				\left| x_{\tilde \mu_0} \right|^2
			\right) 
		\\
		&\leq \label{eq:bound_step2}
			2c_\theta \eta \left( 2  + \left| x_{\tilde \mu_0} \right|^2 \right) 
			 \, ,
\end{align}
where at last we used \eqref{eq:bounds_tilde}.

\medskip
\noindent
\textit{Step 4: estimate of $d_{2,m}^N(z^N, y^N)$.} \
We bound from above the discrete distance from $z^N$ to $y^N$ using the competitor
\begin{align*}
	x_i^N(t) := 
	\begin{cases}
		\tilde x_i^N(t) \, , 
		&\text{if }  i_1 \leq i \leq  i_2  \, , \\
		w_i^N \, ,
		&\text{otherwise} \, ,
	\end{cases}
\end{align*}
which is admissible by construction, i.e. $x^N(0)=z^N$, $x^N(1) = y^N$, and $x^N \in C^\infty([0,1] ; \cK_N)$, thanks to the fact that $( \tilde x_{i_1}^N(\cdot) - w_{i_1-1}^N) \wedge (w_{i_2+1}^N -\tilde x_{i_2}^N(\cdot)) \geq 1$.
Since $\phi \geq 0$ and $R_{i_2}^N(x^N(\cdot)) \leq R_{i_2}^N(\tilde x^N(\cdot))$, we obtain the estimate
\begin{align*}
	d_{2,m}^N(z^N,y^N)^2 \leq \int_0^1 \Phi_{2,m}^N(\tilde x^N(t), \dot{\tilde x}^N(t)) \de t \, .
\end{align*}
Using first \eqref{eq:proof_JKO_controls} and then \eqref{eq:bounds_tilde}, we conclude
\begin{align}	\label{eq:bound_step3}
	\limsup_{N \to \infty} d_{2,m}^N(z^N,y^N)^2 
		\leq
			\int_0^1 \Phi_{2,m}(\tilde \mu_t, \tilde \nu_t) \de t 
		\leq  \btwoW(\mu_0,\mu_1)^2 + \eta  \, .
\end{align}

\medskip
\noindent
\textit{Step 5: proof of \eqref{i:lem:limsup_JKO_3}.} \
Putting together \eqref{eq:bound_step2} and \eqref{eq:bound_step3} we conclude  for every $\eta>0$
\begin{align*}
	\limsup_{N \to \infty} \bW_{2,m}^N(\mu_0^N, \mu_1^N) 
		&\leq 
	\limsup_{N \to \infty}
		d_{2,m}^N(w^N,z^N)
			+
	 	d_{2,m}^N(z^N,y^N)
	 \\
	 &\leq 
	 	\eta^{\frac12}(2c_\theta)^{\frac12} \left( 2  + \left| x_{\tilde \mu_0} \right|^2 \right)^{\frac12}
	 +
		\Big( 
			\btwoW(\mu_0, \mu_1)^2
				+
			\eta 
		\Big)^{\frac12}	
			    \, .
\end{align*}
Finally, by \eqref{eq:bounds_tilde} for $s=0$ we know that $\tilde \mu_0 \to \mu_0$ in $\bW_q$ as $\eta \to 0$. As a consequence of Lemma~\ref{lemma:stability_Ieps}\eqref{i:lemma:stability_Ieps:diam_tight}, the sought bound \eqref{i:lem:limsup_JKO_3} then follows from choosing $\eta$ sufficiently small. 

\medskip 
\noindent
\textit{Step 6: conclusion}. \
We note that, by the assumption \eqref{eq:unif_bounds_2ndmoms} on $\mu_0^N$ and \eqref{eq:proof_JKO_controls},
\begin{align*}
	\sup_{N \in \N} 
		\bE_{\mu_1^N} |x|^2 
	\leq
	\sup_{N \in \N}
	\big(
		\bE_{\mu_0^N} |x|^2 
			+
		\bE_{\tilde \mu_1^N} |x|^2 
	\big)
		< \infty \, ,
\end{align*}
therefore there exists $\bar \mu_1 \in \cP_2(\R)$ such that, up to a (non-relabeled) subsequence, we get that $\mu_1^N \to \bar  \mu_1$ in $\bW_q$.
We are left with the proof of \eqref{i:lem:limsup_JKO_2}. By \eqref{eq:bounds_tilde}, we have that
\begin{align}
		\label{eq:proof_final_1}
	\bW_q(\mu_1, \bar \mu_1) \leq 
		\bW_q(\mu_1,  \tilde \mu_1)
	+
		\bW_q(\tilde \mu_1, \bar \mu_1)
	\leq 
		\eta^{\frac1q} + 
			\bW_q(\tilde \mu_1, \bar \mu_1) \, .
\end{align}
On the other hand, by $\tilde \mu_1^N \to \tilde \mu_1$ in $\bW_\infty$ and arguing as in \eqref{eq:bound_median}--\eqref{eq:step3_4}, we find that
\begin{align*}
	\bW_q&(\tilde \mu_1, \bar \mu_1)
		=
	\lim_{N \to \infty}
		\bW_q(\tilde \mu_1^N, \mu_1^N)^q
		=
	\lim_{N \to \infty} 
		\frac1{N+1} \sum_{i \notin \{i_1, \dots , i_2\}}
		\left|
			w_i^N - \tilde y_i^N
		\right|^q
\\
		&\leq
	\int_{B_1(K)^c} 
				\left|
		x - x_{\tilde \mu_1}
		\right|^q
		\de \mu_0(x)		
		\leq
	2
	\int_{\tilde S^c}
		| x |^q
			\de \mu_0(x)
	+ 2 \eta
		\left| x_{\tilde \mu_1} \right|^q 
		\leq 
	2 \eta \left( 1 + \left| x_{\tilde \mu_1} \right|^q \right) 
	\,  , 
\end{align*}
where at last we used \eqref{eq:bounds_tilde} once more. Together with \eqref{eq:proof_final_1}, we deduce that
\begin{align*}
	\bW_q(\mu_1, \bar \mu_1) 
		\leq
	 \eta^{\frac1q} + 
	(2 \eta)^{\frac1q} \left( 1 + \left| x_{\tilde \mu_1} \right|^q \right)^{\frac1q} \, .  
\end{align*}
Finally, by $\tilde \mu_1 \to \mu_1$ as $\eta \to 0$ and Lemma~\ref{lemma:stability_Ieps}\eqref{i:lemma:stability_Ieps:diam_tight}, for every given $\eps >0$ we obtain the sought bound \eqref{i:lem:limsup_JKO_2} choosing  $\eta$  sufficiently small.

\medskip
\noindent 
\underline{Case 2}: \
We now turn to the general case, meaning we do not assume that \eqref{eq:assumpt_Case1} is satisfied.
	For $\lambda>0$, we consider the regularised measures $(\mu_t^\lambda , \nu_t^\lambda) \in \CE$ given by
\begin{align*}
	\mu_t^\lambda := \big( (1+\lambda)\text{id} \big)_{\#} \mu_t
	\, , \quad 
	\nu_t^\lambda := (1+\lambda) 
	\big( (1+\lambda)\text{id} \big)_{\#} \nu_t \, .
\end{align*}
Note that by construction, the corresponding densities $\rho_t^\lambda$, $j_t^\lambda$ are given by
\begin{align}
		\label{eq:density_lambda}
	\rho_t^\lambda(x) = \frac{1}{1+\lambda} \rho_t 
	\Big(
	\frac{x}{1+\lambda}
	\Big) 
	\, , \quad 
	j_t^\lambda(x) = j_t
	\Big(
	\frac{x}{1+\lambda}
	\Big) \, ,	
	\quad 
	\forall x \in \setR \, , t \in [0,1] \, .
\end{align}
Furthermore, we have that $\mu_0^{N,\lambda} := \big( (1+\lambda)\text{id} \big)_{\#} \mu_0^N= \E^N(w^{N,\lambda})$, where $w^{N,\lambda} = (1+\lambda) w^N$. In particular, we note that  
\begin{align}
	\label{eq:density_bound_w_N_lambda}
	\max 
	\left\{
		\sup_{N \in \setN} \sup_{i\leq N} R_i^N(w^{N,\lambda})
	, \, 
		\sup_t \| \rho_t^\lambda \|_\infty
	\right\}
		 \leq (1+\lambda)^{-1} M < M \, . 
\end{align}
We can then apply the first part of the proof (Case 1) to $\mu_0^{N,\lambda}$, $\mu_0^\lambda$, and $\mu_1^\lambda$: for any fixed $\eps>0$, we can find $\mu_1^{N,\lambda} \in \cE^N$, $\bar \mu_1^\lambda \in \cP_2(\R)$ such that $\mu_1^{N,\lambda} \to \bar \mu_1^\lambda$ in $\bW_q$ and 
\begin{align}
\label{eq:energy_estimate_case2}
	\bW_q( \mu_1^\lambda, \bar \mu_1^\lambda) \leq \frac{\eps}2
		\tand 
	\limsup_{N \to \infty} 			
		\bW_{2,m}^N(\mu_0^{N,\lambda}, \mu_1^{N,\lambda}) 
	\leq 
		\btwoW(\mu_0^\lambda, \mu_1^\lambda)	
			+ \frac{\eps}2 \, .
\end{align}
Using similar arguments as in \cite[Theorem 4.1]{lisiniMarigonda2010}, it is not hard to see that
\begin{align}	\label{eq:bound_step1}
	\sup_{N \in \setN}	d_{2,m}^N(w^N, w^{N,\lambda})^2 \leq C \, \lambda \, \sup_N \|w^N \|_{\ell_{2,N}}^2 =: \bar C^2 \lambda   \, , 
\end{align}
where $C\in \R_+$ only depends on $\phi$ and $\bar C< \infty$ since  $\seq{\mu_0^N}_N$ satisfies \eqref{eq:unif_bounds_2ndmoms}.
Moreover, by \eqref{eq:density_lambda}, we also have that
\begin{align}	\label{eq:energy_bound_lambda}
	\int_0^1 \Phi_{2,m}(\mu_t^\lambda, \nu_t^\lambda) \de t \leq (1+\lambda) \int_0^1 \Phi_{2,m}((1+\lambda)^{-1} \mu_t, \nu_t) \de t 	
	\leq
	(1+\lambda) \btwoW(\mu_0,\mu_1)^2 \, ,
\end{align}
where at last we used that $\theta$ is non-increasing (see Remark~\ref{rem:monotonicity_theta}). Putting \eqref{eq:energy_estimate_case2}, \eqref{eq:bound_step1}, and \eqref{eq:energy_bound_lambda} together, we obtain that
\begin{align*}
	\limsup_{N \to \infty} 			
		\bW_{2,m}^N(\mu_0, \mu_1^{N,\lambda}) 
	&\leq 
	\limsup_{N \to \infty}
		\bW_{2,m}^N(\mu_0, \mu_0^{N,\lambda}) 
			+ 
		\bW_{2,m}^N(\mu_0^{N, \lambda}, \mu_1^{N,\lambda})
\\
	&\leq 
		\bar C \lambda^{\frac12} + \sqrt{(1+\lambda)} \btwoW(\mu_0,\mu_1) + \frac{\eps}2  \, .
\end{align*}
This shows that \eqref{i:lem:limsup_JKO_3} is satisfied with $\mu_1^N:= \mu_1^{N,\lambda}$ if $\lambda>0$ is sufficiently small. Finally, by \eqref{eq:energy_estimate_case2} and the scaling properties of $\bW_q$, we have that
\begin{align*}
	\bW_q( \mu_1, \bar \mu_1^\lambda)
		\leq
	\bW_q( \mu_1 ,  \mu_1^\lambda) 
		+ 
	\bW_q( \mu_1^\lambda, \bar \mu_1^\lambda)
		\leq 
	\lambda \big( \bE_{\mu_1}|x|^q \big)^{\frac1q} + \frac{\eps}2  \, .
\end{align*} 
This shows that \eqref{i:lem:limsup_JKO_2} is satisfied with $\bar \mu_1:= \bar \mu_1^\lambda \in \cP_2(\R)$ if $\lambda>0$ is sufficiently small. Given that $\mu_1^N \to \bar \mu_1$ in $\bW_q$ by construction, we also deduce  \eqref{i:lem:limsup_JKO_1}, which ends the proof.
%
%
\end{proof}

\subsection{Proof of the convergence of the JKO schemes (Theorem \ref{thm:JKO_q})}
We are finally ready to prove Theorem \ref{thm:JKO_q}.

\begin{proof}[Proof of Theorem \ref{thm:JKO_q}]
We start showing \eqref{i:thm:JKO_q_1}.  
	With no loss of generality, we assume $\tau =\frac12$ and $F_N \equiv 0$, the general case following trivially by the assumption in \eqref{eq:cont_F_N_F}. Thus, $\cJ_{\tau,N}(\cdot) = \bW_{2,m}^N(\mu_N^{(0)}, \cdot)^2$ and  $\cJ_\tau(\cdot) = \btwoW(\mu^{(0)}, \cdot)^2$. In this case, the $\Gamma$-$\liminf$ inequality easily follows by Theorem~\ref{thm:Gamma-conv}(LI), hence we shall prove the $\Gamma$-$\limsup$.
	
	Let $\mu \in \cP_2(\R)$ such that $\btwoW(\mu^{(0)},\mu)<\infty$. For every $\eps>0$, we apply Proposition~\ref{prop:limsup_JKO} and find a sequence $\seq{  \mu^{N,\eps} \in \cE^N}_N$ and $\bar  \mu^\eps \in \cP_2(\R)$ such that $\mu^{N,\eps} \to \bar \mu^\eps$ in $\bW_q$, with $\bW_q(\mu, \bar \mu^\eps) \leq \eps$, and $
	\limsup_N
	\bW_{2,m}^N
	\big(
	\mu_N^{(0)},  \mu^{N,\eps} 
	\big)
	\leq
	\btwoW(\mu^{(0)}, \mu) + \eps$. Hence, we can choose a diagonal sequence $\mu^N:= \mu^{N,\eps_N}$ such that $\mu^N \to \mu$ in $\bW_q$ and 
	\begin{align*}
			\limsup_{N \to \infty}
		\bW_{2,m}^N
		\big(
		\mu_N^{(0)}, \mu^N 
		\big)
		\leq
		\btwoW(\mu^{(0)}, \mu) \, ,  
	\end{align*}
which concludes the proof of \eqref{i:thm:JKO_q_1}.

Finally, the proof of \eqref{i:thm:JKO_q_2} follows from the $\Gamma$-convergence shown in \eqref{i:thm:JKO_q_1} and the equicoercivity property proved in Proposition~\ref{prop:equicoercivity}, see e.g.  \cite[Theorem~1.21]{braides2002}.

\subsection*{Acknowledgments}
This work started when L.P. and S.dM visited E.R. at the University of l'Aquila and continued first at the Erwin Schrödinger Institute in Vienna, then at the École Polytechnique Fédérale de Lausanne and also in the Simons institute in Berkeley, CA.
The authors thank the hospitality of these three institutions and, especially, Marco di Francesco for suggesting the topic in first place and for fruitful discussions and literature suggestions.
S.dM acknowledges support by GNAMPA (INdAM). L.P.  acknowledges support by the Austrian Science Fund (FWF), grants No W1245 and
No F65, and by the Hausdorff Center for Mathematics in Bonn. E.R. acknowledges support by GNAMPA (INdAM).
\end{proof}
\bibliographystyle{alpha}
\bibliography{biblioprop} 

\tocless\section{Appendix A. Properties of the action densities}
The following lemma concerns continuity properties of $\phi$.
\begin{lemma}[]	\label{lemma:rescaling}
	Fix $\phi_{p,m}$ as in Defintion~\ref{def:action_density_functions} and define $g(r):= \phi_{p,m}(r,1)$. Pick $\bar M < M$ and for every $\lambda \in (1,2)$, consider the set
	\begin{align*}
		I_{\bar M}(\lambda):= 
		\left\{
		r \in (0,M) \suchthat \lambda r \leq \bar M 
		\right\} \, , 
		\quad
		I_{\bar M}(\lambda) \subset (0,M) \, .
	\end{align*}
	Then we have:
	\begin{align}
		\label{eq:estimate-g-dilatation}
		g(\lambda r) \leq \big( 1 + C_{\bar M} (\lambda -1) \big) g(r) \, , \quad \forall r \in I_{\bar M}(\lambda) \, , 
	\end{align}
	where $C_{\bar M} < \infty$ is a constant depending on $g$ and $\bar M$.
\end{lemma}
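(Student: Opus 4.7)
The plan is to write $g = m^{-(p-1)}$ on $\{m > 0\} \cap (0,M)$ and reduce the claim to the bound
\[
\left(\frac{m(r)}{m(\lambda r)}\right)^{p-1} \leq 1 + C_{\bar M}(\lambda - 1).
\]
Since the ratio $m(r)/m(\lambda r)$ will be shown to lie in a bounded interval depending only on $\bar M$ and $m$ (as $\lambda \in (1,2)$), the elementary inequality $(1+x)^{p-1} \leq 1 + c_{p,\bar M}\,x$ valid on any bounded $x$-interval further reduces the task to the linear estimate $m(r)/m(\lambda r) - 1 \leq \widetilde C_{\bar M}(\lambda - 1)$.

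This last inequality follows directly from concavity. The super-gradient inequality at $\lambda r$ yields $m(r) \leq m(\lambda r) + m'_-(\lambda r)(r - \lambda r)$, so
\[
\frac{m(r)}{m(\lambda r)} - 1 \leq (\lambda-1)\,\frac{r\,(-m'_-(\lambda r))_+}{m(\lambda r)} \leq (\lambda-1) \sup_{s \in (0, \bar M]} \frac{s\,|m'(s)|}{m(s)}.
\]
Everything therefore boils down to a uniform bound on $s\,|m'(s)|/m(s)$ for $s \in (0,\bar M]$. I would obtain this from two consequences of the concavity of $m$ together with $m(0) = 0$: (a) the chord from $(0,0)$ to $(\bar M, m(\bar M))$ lies below the graph of $m$ on $[0, \bar M]$, giving $m(s) \geq (m(\bar M)/\bar M)\,s$ on that interval; (b) the super-differential of $m$ is non-increasing, bounded above by $\theta(0) < \infty$ at the origin (via the standing hypothesis $\theta \in L^\infty$) and below by $m'_-(\bar M)$, which is finite because $\bar M < M$. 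Together these give $|s\,m'(s)|/m(s) \leq \bar M \max(\theta(0), |m'_-(\bar M)|)/m(\bar M)$ on $(0, \bar M]$.

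The main obstacle lies in handling the super-differential of $m$ at the two endpoints of $(0, \bar M]$: near the origin this is ensured precisely by the hypothesis $\theta \in L^\infty$ (equivalent to $m$ having a finite right-derivative at $0$), while the strict inequality $\bar M < M$ prevents $m$ from developing a vertical tangent at $\bar M$ and keeps $m'_-(\bar M)$ finite. Implicit in the argument is also the positivity $m(\bar M) > 0$, which is the case of interest for the application in Proposition~\ref{prop:approximation}; in the degenerate scenario where $m$ vanishes on $[s_0, M]$ for some $s_0 < M$, one simply restricts the argument to $\bar M < s_0$. Once these endpoint issues are addressed, tracking the constants through the above reductions is a routine algebraic step.
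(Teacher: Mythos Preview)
Your argument is correct and reaches the same conclusion, but the route differs from the paper's. The paper exploits the convexity of $g$ directly (recall $\phi_{p,m}$ is jointly convex, so $g(r)=\phi_{p,m}(r,1)$ is convex on $(0,M)$): since $g\to+\infty$ at $0$, either $g$ is non-increasing on all of $(0,M)$---in which case $g(\lambda r)\le g(r)$ trivially---or $g$ has a minimiser $r_{\min}\in(0,M)$. In the first case, and whenever $\lambda r\le r_{\min}$, the bound holds with constant $0$. Otherwise $\lambda<2$ forces $r,\lambda r\in(r_{\min}/2,\bar M)$, a compact subinterval of $(0,M)$ on which $g$ is Lipschitz with some constant $\bar L$; then $g(\lambda r)\le g(r)+\bar L(\lambda-1)r\le\big(1+\bar L\bar M/g(r_{\min})\cdot(\lambda-1)\big)g(r)$.

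Your approach instead works at the level of $m$: you use the super-gradient inequality for the concave $m$ to control $m(r)/m(\lambda r)-1$, then linearise $(1+x)^{p-1}$ on a bounded interval. This is perfectly valid. One small remark: you invoke $\theta\in L^\infty$ to bound $|m'(s)|$ near $s=0$, but this is not actually needed (nor is it part of the lemma's hypotheses). Since you already isolated the positive part $(-m'_-(\lambda r))_+$, the only $s$ that contribute are those with $m'_-(s)<0$, and for these $|m'_-(s)|\le|m'_-(\bar M)|$ by monotonicity of the super-differential. So the bound $\sup_{s\in(0,\bar M]} s(-m'_-(s))_+/m(s)\le \bar M\,|m'_-(\bar M)|/m(\bar M)$ holds without any assumption at the origin. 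The paper's proof avoids this issue entirely by working with $g$ rather than $m$, which makes the argument slightly cleaner; your approach, on the other hand, is more explicit about where the constant $C_{\bar M}$ comes from.
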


\begin{proof}
	Let $r \in I_{\bar M}(\lambda)$. Note that $g$ is strictly positive convex function with domain $(0,M)$ such that $g\to +\infty$ at the boundary of this interval. Hence there exists $r_{\min} \in (0,M)$ a minimum of $g$. 
	We split the proof of \eqref{eq:estimate-g-dilatation} into two cases.
	
	\medskip
	\noindent
	\textit{Case 1}: \
	$\lambda r \leq r_{\min}$. In this case, having that $\lambda >1$ and $g$ is non increasing (by convexity) in $(0,r_{\min})$, we conclude that $g(\lambda r) \leq g(r)$.
	
	\medskip
	\noindent
	\textit{Case 2}: \
	$\lambda r > r_{\min}$. Using that $\lambda < 2$, we obtain that $r > r_{\min}/2$ and thus $r,\lambda r \in (r_{\min}/2, \bar M)$. Over this interval, $g$ is a Lipschitz map of Lipschitz constant $\bar L=\bar L(r_{\min}, \bar M)$. It follows
	\begin{align*}
		g(\lambda r) 
		\leq g(r) + \bar L (\lambda - 1)r		
		\leq  
		\bigg(
		1 + \frac{\bar L \bar M}{g(r_{\min})} 
		(\lambda -1)
		\bigg)
		g(r) \, , 		
	\end{align*}
	which concludes the proof.
\end{proof}

\end{document}